\theoremstyle{plain}
\newtheorem{lem}{Lemma}[section]
\newtheorem{lemma}[lem]{Lemma}
\newtheorem{theorem}[lem]{Theorem}
\newtheorem{proposition}[lem]{Proposition}
\newtheorem{corollary}[lem]{Corollary}
\newtheorem*{mtheorem}{Main Theorem}
\theoremstyle{definition}
\newtheorem{definition}[lem]{Definition}
\newtheorem{example}[lem]{Example}
\newtheorem{remark}[lem]{Remark}
\newtheorem{notation}[lem]{Notation}
\newcommand{\hh}{h}
\newcommand{\aaa}{\alpha}
\newcommand{\mathfont}{\mathbf}
\newcommand{\ZZ}{\mathfont Z}
\newcommand{\QQ}{\mathfont Q}
\newcommand{\FF}{\mathfont F}
\newcommand{\bP}{\mathbb{P}}
\newcommand{\cA}{\mathcal{A}}
\newcommand{\cB}{{\mathcal{A}^{\cF}}}
\newcommand{\cF}{\mathcal{F}}
\newcommand{\cM}{\mathcal{M}}
\DeclareFontFamily{OT1}{rsfs}{}
\DeclareFontShape{OT1}{rsfs}{n}{it}{<-> rsfs10}{}
\DeclareMathAlphabet{\mathscr}{OT1}{rsfs}{n}{it}
\DeclareMathOperator{\Aut}{Aut}
\newcommand{\Hod}{\mathbb{E}_g}
\newcommand{\Mgbar}{ \overline{\mathcal{M}}_g}
\newcommand{\beq}{ \begin{equation}}
\newcommand{\eeq}{ \end{equation}}
\newcommand{\calF}{\mathcal{F}}
\newcommand{\todo}[1]
\title{Mass formula for non-ordinary curves in one dimensional families}
\author{Renzo Cavalieri}
\address{Department of Mathematics, Colorado State University, Fort Collins, CO 80523, USA}
\email{renzo.cavalieri@colostate.edu}
\author{Rachel Pries}
\address{Department of Mathematics, Colorado State University, Fort Collins, CO 80523, USA}
\email{pries@colostate.edu}
\date{\today}
\begin{document} 
 
\begin{abstract}
This paper is about one dimensional families of cyclic covers of the projective line in positive characteristic.
For each such family, we study the mass formula for the number of non-ordinary curves in the family.
We prove two equations for the mass formula: 
the first relies on tautological intersection theory;
and the second relies on the $a$-numbers of non-ordinary curves in the family.
Our results generalize the Eichler--Deuring mass formula for supersingular elliptic curves;
they also generalize some theorems of Ibukiyama, Katsura, and Oort 
about supersingular curves of genus $2$ that have an automorphism of order $3$ or order $4$.
We determine the mass formula in many new cases, including 
linearized families of hyperelliptic curves of every genus and 
all families of cyclic covers of the projective line branched at four points.

keywords: curve, hyperelliptic curve, cyclic cover, Jacobian, mass formula, cycle class, tautological ring, Hodge bundle, 
intersection theory, Frobenius, non-ordinary, $p$-rank, $a$-number.

\noindent
MSC20: primary 11G20, 14C17, 14H10, 14H40, 14N35; secondary 11G10, 14G15, 14H37, 11M38 
\end{abstract}



\thanks{Cavalieri acknowledges support from Simons Collaboration Grant 420720 and NSF grant DMS - 2100962. 
Pries was partially supported by NSF grant DMS - 22-00418.  
The authors would like to thank John Voight and the anonymous referee for helpful conversations.}

\maketitle

\section{Introduction}

Suppose $p$ is a prime number and $k=\bar{\mathbb F}_p$.
If $E$ is an elliptic curve over $k$, 
we say that $E$ is \emph{ordinary} if the number of $p$-torsion points on $E$
is exactly $p$; if not, we say that $E$ is \emph{non-ordinary} (or \emph{supersingular}).
The Eichler--Deuring mass formula states that
\begin{equation} \label{EED}
\sum_{[E]} \frac{1}{\#{\mathrm{Aut}}(E)} = \frac{p-1}{24},
\end{equation}
where the sum is over the isomorphism classes of non-ordinary elliptic curves $E$ over $k$.

Thus the number of isomorphism classes of supersingular elliptic curves
grows linearly in $p$, with asymptotic rate of growth $(p-1)/12$.
From this, it is possible to determine the exact number of isomorphism classes of supersingular elliptic curves; 
for $p \geq 5$, this exact number is 
$\lfloor p/12 \rfloor + \epsilon_p$ where $\epsilon_p = 0,1,1,2$ when $p \equiv 1,5,7,11 \bmod 12$ respectively.
The exact number depends on the congruence of $p$ modulo $12$ because of the 
contributions from the elliptic curves with $j$-invariants $0$ and $1728$, which have extra automorphisms.

The Legendre family of elliptic curves is given by the affine equation
\begin{equation} \label{Legendre}
E_t: y^2=x(x-1)(x-t).
\end{equation}
One proof of the Eichler--Deuring mass formula
uses the Cartier operator and results of Igusa to show that there are $(p-1)/2$ values of $t$ for which 
$E_t$ is supersingular, see Remark~\ref{Rlegendre}.
For a second proof, 
one studies the divisor of zeroes of a section of $\mathrm{det}({\mathbb E})^{12}$, 
where ${\mathbb E}$ denotes the Hodge bundle; this divisor is supported
on the cusp at the boundary of the Legendre family.
Each of these proofs sheds light on the material covered by the other.

In this paper, we generalize the Eichler--Deuring mass formula to the setting of one dimensional 
families $\mathcal{F}$ of curves of genus $g$ that are cyclic covers of the projective line ${\mathbb P}^1$ over $k$. 
We consider the locus $V_{g-1}$ of non-ordinary abelian varieties, which is a codimension one cycle in the 
moduli space $\cA_g$ of principally polarized abelian varieties of dimension $g$, and its pullback 
to the moduli space $\overline{\cM}_g$ of stable curves of genus $g$.
We define the mass formula for $\cF$ as the degree of the zero dimensional class obtained by intersecting $V_{g-1}$ 
with the curve in $\overline{\cM}_g$ determined by the family $\cF$. 

When the generic curve in the family is ordinary,
this degree gives a weighted count of the non-ordinary curves in $\cF$.  Each curve $X$ is weighted with the usual reciprocal of the size of $\mathrm{Aut}(X)$, and also with the multiplicity of intersection between 
$\mathcal{F}$ and $V_{g-1}$ at $X$.  

In our first main result, Theorem~\ref{thm:secondeq}, we reinterpret 
the mass formula in terms of a tautological intersection number.
Because of this, the mass formula admits a natural evaluation using a
tautological intersection theory computation from \cite{COSlambda1}.  
In our second main result, Theorem~\ref{thm:firsteq}, we reinterpret 
the mass formula in terms of the $a$-numbers of the non-ordinary curves in the family.
 
We then determine the mass formula for important one dimensional families of curves.
These families generalize the Legendre family in several different ways and also
generalize earlier work in genus $2$ from \cite{IKO}.
Specifically, we determine the mass formula for non-ordinary curves for:
\begin{itemize}
\item Corollary~\ref{Chyp}: Linearized one dimensional families of hyperelliptic curves of genus $g$ for any $g \geq 2$. 
See Definition~\ref{Dlinear} for the definition of linearized.
\item Corollary~\ref{C4point}: Families of cyclic covers of $\mathbb{P}^1$ branched at $4$ points, for any degree $d$ and any inertia type.
\end{itemize}

We now give a more detailed introduction to the objects and results in the paper.

\subsection{Cyclic covers of the projective line}

Let $k$ be an algebraically closed field of characteristic $p$.
Let $X$ be a (smooth, connected, projective) curve of genus $g$ defined over $k$.  
Let $J_X$ be the Jacobian of $X$.

Let $d \geq 2$ be an integer with $p \nmid d$. 
We suppose that $X$
admits a cyclic degree $d$ cover $\hh:X \to {\mathbb P}^1$ with $n \geq 4$ branch points.  
A discrete invariant of $\hh$ is the \emph{inertia type}, which is an 
$n$-tuple $a=(a_1,a_2, \ldots, a_n)$ of integers $a_i$ with $0 < a_i < d$ for $1 \leq i \leq n$ and 
$\sum_{i=1}^n a_i \equiv 0 \bmod d$.
Then $X$ admits an affine equation of the form
\begin{equation} \label{Ecurvegenerala}
y^d = \prod_{i=1}^{n-1} (x-t_i)^{a_i}, 
\end{equation} 
where $t_1=0$, $t_2 =1$ and $t_3, \ldots, t_{n-1}$ are pairwise distinct elements of $k-\{0,1\}$.

More precisely, by Riemann's Existence Theorem, there is a unique 
smooth projective connected curve $X$ having affine equation \eqref{Ecurvegenerala}.
It is given by the normalization of this equation at its singularities.
The inertia type determines the genus $g$ of $X$, see Lemma~\ref{LRH}.

Let $\tau \in \mathrm{Aut}(X)$ be an automorphism of order $d$
such that the quotient curve $X/\langle \tau \rangle$ is ${\mathbb P}^1$.
With respect to \eqref{Ecurvegenerala}, we can choose $\tau: (x,y) \mapsto (x, \zeta_d y)$, 
where $\zeta_d$ is a primitive $d$th root of unity.
Let ${\it \mathrm{Aut}(X, \tau)}$ denote the normalizer of $\tau$ in $\mathrm{Aut}(X)$.

\subsection{Non-ordinary curves}

The curve $X$ is \emph{ordinary} if the number of $p$-torsion points in $J_X(k)$ is exactly $p^g$.
The ordinary condition is equivalent to the action of $V$ on $H^0(X, \Omega^1)$ being invertible, where 
$V$ denotes the Verschiebung morphism.
If $X$ is not ordinary, then its $a$-number $a(X)$ is positive;
 the $a$-number is the co-rank of $V$ on $H^0(X, \Omega^1)$. 
Equivalent definitions are described in Section~\ref{Sanumber}.

There are some conditions on the inertia type and the congruence of $p$ modulo $d$
that are necessary for the curve $X$ in \eqref{Ecurvegenerala} to be ordinary for a 
generic choice of $\vec{t}=(t_3, \ldots, t_{n-1})$. 
Under mild hypotheses (including the case that $p$ is large and the case that
$p \equiv \pm 1 \bmod d$ and the case that $n=4$),
Bouw proved that these necessary conditions are also sufficient, \cite{Bouw}.
More generally, the inertia type and the congruence of 
$p$ modulo $d$ place restrictions on $a(X)$. 

\subsection{Main result}
We fix the degree $d$ and the inertia type $a$.
Let ${\mathcal A}_{d,a}$ denote the moduli space of  admissible $\mu_d$-covers of a rational curve
with inertia type $a$, as introduced in Section \ref{sec:msac}.  
A one dimensional family $\mathcal{F}$ of such covers over the field $k$ is, by definition, 
a morphism $\phi_\cF: \cB \to {\mathcal A}_{d,a}$ 
for some smooth, proper, irreducible, one dimensional Deligne--Mumford (DM)
stack $\cB \to {\mathrm{Spec}}(k)$.
When the general target curve of $\cF$ is smooth, we abbreviate this as a {\it one dimensional family of 
$\mu_d$-covers of $\mathbb{P}^1$}.

Let ${\overline{\mathcal M}}_g$ be the moduli space of stable curves of genus $g$.
There is a forgetful morphism 
\begin{equation} \label{Edefphi}
\phi^{\mathcal{F}}: \cB \to {\overline{\mathcal M}}_g,
\end{equation}
that records the source curve of the cover. Let $\mathcal{M}^{\mathcal{F}}$ be the image of $\phi^{\mathcal{F}}$ 
 and ${\delta^{\mathcal{F}}}$ be its degree.

Let $\mathbb{E}_g\to  {\overline{\mathcal M}}_g$ denote the Hodge bundle.
If $X$ is a curve of genus $g$, the sections of $\mathbb{E}_g$ over $X$ are the holomorphic $1$-forms on $X$.  
Let $\lambda_1$ denote its first Chern class. For a one dimensional family $\mathcal{F}$ of $\mu_d$-covers of $\mathbb{P}^1$ with inertia type $a$,  we denote by ${\mathrm{deg}}^{\cF}(\lambda_1)$ the degree of the pullback of $\lambda_1$ via  the morphism $\phi^{\mathcal{F}}$.

For a prime $p \nmid d$, consider the characteristic $p$ fiber of $\overline{\mathcal{M}}_{g}$.
The locus $V_{g-1}$ of non-ordinary curves
is a codimension one cycle determining a degree one class in the Chow ring of $\Mgbar$.  
We define the {\it mass formula} for the family $\mathcal{F}$ to be the intersection number
\begin{equation} \label{def:mass}\mu(\cF,p) =\int_{\Mgbar}[{\mathcal M}^{\cF}] \cdot [V_{g-1}] .\end{equation}
When the generic source curve in $\mathcal{F}$ is ordinary, we provide two ways to evaluate $\mu(\cF, p)$.
The first is in terms of {the intersection number} ${\mathrm{deg}}^{\cF}(\lambda_1)$ and the degree
$\delta^{\cF}$. 
The second is by a weighted count of the non-ordinary curves in $\cF$. 
Each curve $X$ is weighted by the multiplicity $m_X$ of the intersection of ${\mathcal M}^{\cF}$ 
and $V_{g-1}$ at $X$, and by the reciprocal of $\#\mathrm{Aut}(X)$.
Using Proposition~\ref{PmxF}, we rewrite the second formula in terms of other invariants: 
$\#\mathrm{Aut}(X, \tau)$, which is easier to compute than $\#\mathrm{Aut}(X)$;  
and the order of vanishing $\aaa_X$ of the determinant of the Cartier operator at $X$, which is closely 
related with the $a$-number $a(X)$.


We collect both results in the following statement.

\begin{mtheorem} \label{Tintro}
Let $\cF$ be a one dimensional family of admissible $\mu_d$-covers of a rational curve with inertia type $a$. 
Suppose $p\nmid d$ is a prime such that the generic point of the characteristic $p$ fiber 
${\mathcal M}^{\cF}_{p}$ of ${\mathcal M}^{\cF}$ represents an ordinary curve. 
Then
\begin{equation} \label{Eintro}
\sum_{[X]} \frac{\aaa_X}{\#{\mathrm{Aut}}(X, \tau)}
\stackrel{{Thm.\ \ref{thm:firsteq}}}{=} 
\mu(\cF,p)
\stackrel{{Thm.\ \ref{thm:secondeq}}}{=} (p-1) \frac{{\mathrm{deg}}^{\cF}(\lambda_1)}{\delta^{\cF}},
\end{equation}
where the sum on the left is over the isomorphism classes of non-ordinary curves $X$ in 
${\mathcal M}^{\cF}_{p}$. 
Furthermore, $\aaa_X \geq a(X)$, with equality when $n=4$ and $p \equiv 1 \bmod d$. 
\end{mtheorem}

In Lemma~\ref{Llegendre}, we show that \eqref{Eintro} specializes to the Eichler--Deuring mass formula for the Legendre family, where $d=2$, $n=4$, and $a=(1,1,1,1)$.
In Sections~\ref{SikoA} and \ref{SikoB}, we show that \eqref{Eintro} agrees with the results of 
Ibukiyama, Katsura and Oort \cite{IKO} for two families of curves of genus $2$.

The proofs of Theorems~\ref{thm:secondeq} and \ref{thm:firsteq} are in Section~\ref{SproofmaintheoremF}. 
In \cite{COSlambda1}, Cavalieri, Owens, and Somerstep found an explicit formula 
 for the class $\lambda_1$ on 
${\mathcal A}_{d,a}$ as a linear combination of boundary divisors; see Section~\ref{SCOS}.  
This allows us to evaluate the right hand side of \eqref{Eintro} for many families $\cF$. 
With additional information about the $a$-numbers,
we can then deduce information about the number of non-ordinary curves in $\cF$.

We note that Theorem~\ref{thm:secondeq} holds in greater generality than stated in this paper, see Remark~\ref{Rgeneral}.

\subsection{Corollaries}

Section~\ref{Slinhyp} contains our first corollary which 
is for linearized families of hyperelliptic curves of every genus $g \geq 2$ (see Definition \ref{Dlinear}).  

\begin{corollary} [Corollary~\ref{Chyp}]
Let $g \geq 2$ and $p$ be odd.  
Suppose $h(x) \in k[x]$ is a separable polynomial of degree $2g+1$ whose set of roots is not stabilized by any 
$\mathrm{PGL}_2(k)$ symmetry.
Suppose $p$ is a prime such that the generic curve in the characteristic $p$ fiber of 
$\cF_{h(x)} : y^2 = h(x)(x-t)$ is ordinary. 
Then the mass formula is $\mu(\cF_{h(x)}, p) = (p-1)g/4$.
\end{corollary}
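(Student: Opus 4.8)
The plan is to reduce the statement to a characteristic-zero intersection computation by invoking the Main Theorem, and specifically the equality $\mu(\cF,p)=(p-1)\,\mathrm{deg}^{\cF}(\lambda_1)/\delta^{\cF}$ of Theorem~\ref{thm:secondeq}. Here $\cF=\cF_{h(x)}$ is the linearized family of Definition~\ref{Dlinear} with $d=2$ and inertia type $a=(1,1,\dots,1)$ of length $2g+2$; its $2g+2$ branch points are the $2g+1$ roots $r_1,\dots,r_{2g+1}$ of $h(x)$ together with the moving point $t$, so the base $\cB$ is the $t$-line. Everything then comes down to evaluating the two invariants $\delta^{\cF}$ and $\mathrm{deg}^{\cF}(\lambda_1)$, after which the displayed chain \eqref{Eintro} produces the answer.

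First I would verify that $\delta^{\cF}=1$, i.e. that the classifying morphism $\phi^{\cF}$ is birational onto its image in $\overline{\mathcal M}_g$. Because $g\geq 2$, two members $C_t$ and $C_{t'}$ are isomorphic exactly when some $\sigma\in\mathrm{PGL}_2(k)$ carries $\{r_1,\dots,r_{2g+1},t\}$ to $\{r_1,\dots,r_{2g+1},t'\}$. If this occurred for infinitely many $t$, then at least $2g\geq 4$ of the $r_i$ would again be sent into $\{r_1,\dots,r_{2g+1}\}$; since an element of $\mathrm{PGL}_2(k)$ is pinned down by its values at three points, $\sigma$ would be forced to be independent of $t$ and to permute the set $\{r_1,\dots,r_{2g+1}\}$. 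The hypothesis that this set has no nontrivial $\mathrm{PGL}_2(k)$ symmetry then forces $\sigma=\mathrm{id}$ and $t=t'$. Hence $\phi^{\cF}$ is generically injective and $\delta^{\cF}=1$.

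Next I would compute $\mathrm{deg}^{\cF}(\lambda_1)=g/4$ using the explicit expression for $\lambda_1$ on $\mathcal A_{2,a}$ as a combination of boundary divisors from \cite{COSlambda1} (see Section~\ref{SCOS}). The first task is to locate where $\cB$ meets the boundary. As $t\to r_i$ the branch points $t$ and $r_i$ collide; writing $h(x)=(x-r_i)\tilde h(x)$ with $\tilde h(r_i)\neq 0$, the substitution $w=y/(x-r_i)$ shows that the stable limit is the genus $g-1$ curve $w^2=\tilde h(x)$ with its two points over $x=r_i$ glued into a single non-separating node, so $\cB$ meets exactly one boundary divisor there, and there are precisely $2g+1$ such points. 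The only other candidate, $t\to\infty$, is harmless: rescaling $y=\sqrt{-t}\,\eta$ shows the limit is the smooth genus $g$ curve $\eta^2=h(x)$ (now branched at $x=\infty$), so $t=\infty$ is an interior point contributing nothing. Each collision is transverse, so $\mathrm{deg}^{\cF}(\lambda_1)$ equals $2g+1$ times the coefficient that \cite{COSlambda1} attaches to the collision of two inertia-$1$ points; this coefficient is $g/(4(2g+1))$, and summing gives $g/4$. As an independent check, the Cornalba--Harris relation for hyperelliptic fibrations, $(8g+4)\lambda_1=g\,\delta_0+(\text{separating terms})$, applied to our family (for which the separating boundary is empty and $\deg\delta_0=2g+1$) yields the same value $g(2g+1)/(8g+4)=g/4$.

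Assembling the pieces, \eqref{Eintro} gives $\mu(\cF_{h(x)},p)=(p-1)\,\mathrm{deg}^{\cF}(\lambda_1)/\delta^{\cF}=(p-1)(g/4)/1=(p-1)g/4$. I expect the delicate step to be the evaluation of $\mathrm{deg}^{\cF}(\lambda_1)$: one must confirm that the family meets no boundary stratum other than the $2g+1$ two-point collisions and that it does so transversally, and one must extract the correct normalization of the boundary coefficient from \cite{COSlambda1}. This normalization is exactly where the ubiquitous hyperelliptic involution (which lies in $\mathrm{Aut}(X,\tau)$ for every fibre and endows $\cB$ with a generic $\mu_2$-automorphism) enters; tracking this factor of two is what distinguishes the correct value $g/4$ from the naive coarse-space count $g/2$. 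By contrast, the identity $\delta^{\cF}=1$ and the remaining bookkeeping are routine once the symmetry hypothesis is in hand.
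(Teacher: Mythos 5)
Your reduction to Theorem~\ref{thm:secondeq}, your verification that $\delta^{\cF}=1$, and your identification of the boundary geometry ($2g+1$ transverse collisions $t\to r_i$, with $t\to\infty$ an interior point) all match the paper's proof. The gap is in the central step, the evaluation of $\mathrm{deg}^{\cF}(\lambda_1)$. There is no ``coefficient $g/(4(2g+1))$ attached to the collision of two inertia-$1$ points'' in \cite{COSlambda1}: the coefficient in Theorem~\ref{thm:gf} is the purely local quantity $\gcd^2(a_i+a_j,d)/(24d)$, independent of $g$, which for $d=2$ and $a_i=a_j=1$ equals $1/12$ per subset $J$ (each geometric divisor appearing twice as $J$ and $J^c$). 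Combining this with the correct stacky intersection number $\cF\cdot\Delta_{\{i,2g+2\}}=\tfrac12[pt]$, the boundary divisors alone contribute $(2g+1)/12$, which differs from $g/4$ by $(g-1)/12$ --- nonzero for every $g\geq 2$. (It vanishes exactly when $g=1$, i.e.\ for the Legendre family, which is presumably why the heuristic looks plausible.) The missing contribution comes from the $\psi_j$ and $\kappa_1$ terms in \eqref{eq:gf}, which do \emph{not} vanish on the family: the paper computes, via Lemma~\ref{factoids}, that $\cF\cdot\psi_i=\tfrac12[pt]$ for $i\neq 2g+2$, $\cF\cdot\psi_{2g+2}=\tfrac{2g-1}{2}[pt]$, and $\cF\cdot\kappa_1=\tfrac{2g-1}{2}[pt]$, and only after adding these does one get $g/4$. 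Your coefficient is reverse-engineered from the desired answer rather than extracted from \cite{COSlambda1}, so the main computation is asserted, not proved.

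Your ``independent check'' via the Cornalba--Harris relation is a genuinely different route from the paper's, and it can be made rigorous, but as written it hides exactly the multiplicity issues it is meant to confirm. The claim $\deg_{\cF}\delta_0=2g+1$ requires proof: near $t=r_i$ the total space of the family is $y^2=\xi(\xi-s)u$ with an $A_1$ singularity at the node, so the node-smoothing parameter pulls back to order $2$ in $s$ (equivalently, the hyperelliptic locus is ramified to order $2$ along $\xi_0$, i.e.\ $\delta_0$ restricts to $2\xi_0$ plus other terms); on the coarse $t$-line this gives $\deg\delta_0=2(2g+1)$ and hence $\lambda$-degree $g/2$, and it is the generic $\mu_2$-gerbe on $\cB$ that halves this to $g/4$. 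These two factors of $2$ cancel, which is why the naive count lands on the right answer, but your closing discussion attributes the discrepancy to a single factor of two without establishing either one, and the Cornalba--Harris boundary classes ($\xi_0$ versus $\delta_0$ versus the paper's $\Delta_J$) are never matched to the paper's conventions. To repair the proof you would either carry out the $\psi$/$\kappa$ intersection computations as in parts (3)--(5) of the paper's argument, or else prove the two multiplicity statements above and cite a precise form of the Cornalba--Harris relation.
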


For the proof of Corollary~\ref{Chyp}, we
study the intersection of $\cF_{h(x)}$ with the boundary of $\overline{\mathcal{M}}_g$.
This calculation is combinatorial and independent of the prime $p$.

In the second corollary, Corollary~\ref{C4point}, we determine the mass formula for any family of 
$\mu_d$-covers of $\mathbb{P}^1$ branched at four points.
This is a natural class of examples to consider because $\cF$ coincides with a one dimensional moduli space 
$\mathcal{A}_{d,a}$. 
In this case, the right hand side of \eqref{Eintro} can be computed explicitly using 
\cite[Theorem~1.2]{COSlambda1} and Lemma~\ref{lem:deg}.

For example, for the family $\cF$ of curves of genus $d-1$ given by:
\begin{equation} \label{E111}
X_t: y^d = x(x-1)(x-t),
\end{equation} 
the mass formula for the non-ordinary curves is $\mu(\cF, p)=(p-1) (d^2-1)/72 d^2$, 
if $d \geq 5$ with $\mathrm{gcd}(d,6) = 1$, and $p \equiv 1 \bmod d$, see Example~\ref{Ecase111}.

In Section~\ref{Sspecial}, we study $14$ families of cyclic covers associated with special Shimura varieties; 
these occur for curves of genus $1-7$, with the genus $1$ case being the Legendre family.
For these families, we have complete knowledge of the $a$-numbers of 
the non-ordinary curves.  Thus, in Corollary~\ref{Cmoonen}, in a direct generalization of the 
Eichler--Deuring formula, we determine 
both the mass formula and the asymptotic rate of growth 
of the number of non-ordinary curves in the 14 families as a linear function of the prime $p$. 
Here are two examples:

\begin{corollary} \label{Cintro}
If $d=5$ (resp.\ $d=7$), for $p \equiv 1 \bmod d$, 
the number of isomorphism classes of non-ordinary curves in the family \eqref{E111} grows 
linearly in $p$, with asymptotic rate of growth
$(p-1)/30$ (resp.\ $(p-1)/21$).
\end{corollary}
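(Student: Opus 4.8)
The plan is to combine the explicit value of the mass formula from Example~\ref{Ecase111} with a determination of the $a$-numbers and automorphism groups of the non-ordinary curves, and then to read off the asymptotic count from the identity \eqref{Eintro}. First I would record that, for $d\in\{5,7\}$ and $p\equiv 1\bmod d$, Example~\ref{Ecase111} gives $\mu(\cF,p)=(p-1)(d^2-1)/(72d^2)$, that is, $(p-1)/75$ for $d=5$ and $2(p-1)/147$ for $d=7$. Since $n=4$ and $p\equiv 1\bmod d$, the left-hand equality in \eqref{Eintro} applies with $\aaa_X=a(X)$, so that
\[
\sum_{[X]}\frac{a(X)}{\#\mathrm{Aut}(X,\tau)}=\mu(\cF,p),
\]
the sum running over isomorphism classes of non-ordinary curves in the family.

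The key step is to show that \emph{every} non-ordinary curve $X$ in the family has $a(X)=2$. Because $p\equiv 1\bmod d$, the $d$-th roots of unity lie in $\F_p$, so the Verschiebung $V$ commutes with the $\mu_d$-action and preserves the eigenspace decomposition of $H^0(X,\Omega^1)$. Computing the eigenspace dimensions for the inertia type $a=(1,1,1,d-3)$ (from $d_j=-1+\sum_{i=1}^4\langle -ja_i/d\rangle$, with $\langle\,\cdot\,\rangle$ the fractional part) yields $(d_1,\dots,d_4)=(2,1,1,0)$ when $d=5$ and $(d_1,\dots,d_6)=(2,2,1,1,0,0)$ when $d=7$. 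Thus the conjugate pairs $(j,d-j)$ split into rigid factors of definite signature (those with $d_jd_{d-j}=0$) together with a single varying factor $A$ of signature $(1,1)$. The rigid factors have $\mathrm{Fil}^1$ equal to a union of full $\tau$-eigenspaces, which for $p\equiv 1\bmod d$ forces the ordinary Newton polygon, so they never contribute to $a(X)$. All variation of the Newton polygon is therefore concentrated in the two-dimensional factor $A$, whose two relevant one-dimensional eigenspaces are interchanged by the principal polarization; I would show that $V$ fails to be invertible on one of them if and only if it vanishes on both, so that whenever $X$ is non-ordinary the factor $A$ is superspecial and $a(X)=a(A)=2$. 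This uniformity is precisely the complete knowledge of $a$-numbers recorded in Corollary~\ref{Cmoonen}.

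With $a(X)\equiv 2$ in hand, the count follows by a weighting argument. The generic non-ordinary curve has $\mathrm{Aut}(X,\tau)=\langle\tau\rangle$ of order $d$, and only finitely many parameter values $t$ (independent of $p$) give a curve with extra automorphisms; these contribute $O(1)$ to both the sum and the number $N$ of isomorphism classes of non-ordinary curves. Hence $\tfrac{2}{d}N=\mu(\cF,p)+O(1)$, giving
\[
N=\frac{d}{2}\,\mu(\cF,p)+O(1)=\frac{(d^2-1)(p-1)}{144\,d}+O(1),
\]
which specializes to $(p-1)/30$ for $d=5$ and $(p-1)/21$ for $d=7$.

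The main obstacle is the middle step: establishing that the non-ordinary locus is pure of $a$-number $2$. This rests on two points requiring care --- that the definite-signature factors remain ordinary for all relevant $p$ (so they are invisible to $a(X)$), and that non-invertibility of $V$ on the varying $U(1,1)$-factor is forced to occur simultaneously on both conjugate eigenspaces by the polarization. Once this purity is established, the passage from the mass formula to the asymptotic rate is routine, since the exceptional automorphism loci are finite and independent of $p$ and hence do not affect the leading term.
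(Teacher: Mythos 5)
Your proposal is correct, and its overall skeleton (mass formula from Theorems~\ref{thm:secondeq} and \ref{thm:firsteq}, purity of the $a$-number, generic automorphism group $\langle\tau\rangle$, exceptional points absorbed into an $O(1)$ term) matches the paper's; the numbers $(p-1)/75$, $2(p-1)/147$, and the final rates $(p-1)/30$, $(p-1)/21$ all agree. The genuine difference is in how the key input $a(X)=2$ for every non-ordinary $X$ is obtained. The paper does not prove this directly: it observes that the inertia type $(1,1,1,2)$ for $d=5$ is equivalent to $(1,3,3,3)$ and $(1,1,1,4)$ for $d=7$ is equivalent to $(2,4,4,4)$, identifies the two families with the special Moonen families $M[11]$ and $M[17]$, and then reads $a_\nu=2$ off the table in Corollary~\ref{Cmoonen}, whose proof cites the Newton polygon data for basic points of the associated Shimura varieties in \cite[Section~6]{LMPT2}. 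You instead prove the purity from scratch: for $p\equiv 1\bmod d$ the $\mu_d$-eigenspace decomposition of $H^1_{dR}$ (equivalently of the $p$-divisible group) splits off, for each conjugate pair $(j,d-j)$, a factor of height $f_j+f_{d-j}$; your signature computations $(2,1,1,0)$ and $(2,2,1,1,0,0)$ show all pairs are of definite signature $(2,0)$ --- hence \'etale times multiplicative, ordinary, contributing nothing to $a(X)$ --- except a single $(1,1)$ pair, on which non-ordinarity forces both mutually dual height-$2$, dimension-$1$ halves to be supersingular simultaneously, giving $a(X)=2$. That duality step, which you only sketch (``I would show that...''), does go through: Cartier vanishing on $L_j$ is equivalent to $L_j=\mathrm{im}(F)$ in the $j$-component, which under the polarization pairing is equivalent to $L_{d-j}=\ker(V)$ in the $(d-j)$-component, i.e.\ Cartier vanishing there. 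What each route buys: the paper's is shorter given the external literature and handles all fourteen special families uniformly, situating these two examples in the Deligne--Mostow framework; yours is self-contained, makes the mechanism visible, and in fact applies to any $\mu_d$-family branched at four points whose signature has exactly one $(1,1)$ pair and all other pairs definite, whether or not the family is special in Moonen's sense.
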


In Corollaries~\ref{CcaseB} and Corollary~\ref{Ccasesecond}, we determine the mass formula for 
families of curves of arbitrarily large genus whose automorphism group contains a dihedral group.
These results specialize to work in \cite{IKO} when $g=2$.

For most families of cyclic covers of $\mathbb{P}^1$, it is not feasible to 
obtain complete information about the $a$-numbers of the non-ordinary curves. 
Indeed, in certain cases, this is related to an open question about 
hypergeometric differential equations; see Remark~\ref{Rhypergeometric}.
The $a$-number is an invariant of the $p$-torsion group scheme (or de Rham cohomology)  
of a curve in characteristic $p$. 
Our results show that the $a$-numbers of curves in a family also distill subtle information about the 
geometry of the family in $\overline{\mathcal{M}}_g$.

This paper provides a new approach to studying non-ordinary curves in families using intersection theory.
We remark that it might be possible to generalize this approach to obtain results about curves with lower 
$p$-ranks, using the fact that the class of the $p$-rank $f$ strata in the Chow ring of the moduli space of curves
is known. 
The main issues are the following: one would need to work with a $g-f$ dimensional family of curves, which might 
involve more sophisticated geometry; classes of higher codimension 
make the intersection problem computationally more complex; and intersection theory does not provide information
when a family intersects a stratum that it is expected to miss for dimension reasons.

\section{Background}
\label{sec:background}

In this section, we provide some background material including: 
the $a$-number of a curve; cyclic covers of $\mathbb{P}^1$; moduli spaces of curves;
the Hodge bundle on $\overline{\mathcal{M}}_g$ and its first Chern class; and the cycle class of the non-ordinary
locus.
Much of this material is well known in one of the two communities that might be reading this work, 
but perhaps not in both.

For a smooth curve $X$, let $H^0(X, \Omega^1)$ denote the vector space of holomorphic one-forms.

\subsection{The $a$-number} \label{Sanumber}

Let $k$ be an algebraically closed field $k$ of characteristic $p$.
Suppose $A$ is a principally polarized abelian variety of dimension $g$ defined over $k$.
Let $A[p]$ denote its $p$-torsion group scheme.
In this paper, $A$ is usually the Jacobian $J_X$ of a smooth curve $X$ of genus $g$.

The \emph{$p$-rank} $s(A)$ of $A$ is the integer $s$ such that $\#A[p](k) =p^{s}$.
It is well-known that $0 \leq s \leq g$.
The abelian variety $A$ is \emph{ordinary} if and only if $s(A)=g$.

The \emph{$a$-number} of $A$ 
is $a(A): = \mathrm{dim}_k \mathrm{Hom}(\alpha_p, A[p])$ where the group scheme 
$\alpha_p$ is the kernel of Frobenius on the additive group ${\mathbb G}_a$.  
It equals the dimension of $\mathrm{Ker}(F) \cap \mathrm{Ker}(V)$
on the Dieudonn\'e module of $A[p]$, where $F$ is the Frobenius morphism and $V$ is the Verschiebung morphism.

If $A$ is not ordinary, then $a(A) > 0$, because there
is a non-trivial local-local summand of $A[p]$ on which $V$ is nilpotent.
More generally $0 < a(A) + s(A) \leq g$.

For a curve $X$, the $p$-rank $s(X)$ and the $a$-number $a(X)$ are that of its Jacobian.
The $p$-rank $s(X)$ is the stable rank of $V$ on $H^0(X, \Omega^1)$.
The $a$-number depends only on the rank of $V$;
by \cite[Equation~5.2.8]{LO}, $a(X) =g - \mathrm{rank}(V)$.

The action of $V$ on $H^0(X, \Omega^1)$ is given by the Cartier--Manin matrix.
As in \cite[Definitions~2.1, 2.1']{Yui}, this is   
the matrix for what is called the modified Cartier operator $C$. 
Here $C$ is a $1/p$-linear map which trivializes exact 1-forms and satisfies $C(f^{p-1}df) = df$ for any function $f$. 

\subsection{Cyclic covers of $\mathbb{P}^1$}
\label{sec:ccov}

Let $d \geq 2$.
Fix a primitive $d$-th root of unity $\zeta_d$ in $k$.  Let $\mu_d=\langle \zeta_d \rangle$.

Let $p$ be a prime with $p \nmid d$.  
The material in this subsection is well-known when working over $\mathbb{C}$, but holds 
equally well when working with the characteristic $p$ 
reduction of the curves.

\subsubsection{The inertia type}

An inertia type of length $n$ for $d$ is an $n$-tuple $a=(a_1,\ldots, a_n)$ 
with $0< a_i < d$
such that $\sum_{i=1}^n a_i \equiv 0 \bmod d$ and ${\rm gcd}(a_1,\ldots, a_n)=1$.

\begin{definition}
Let $\hh: X\to \mathbb{P}^1$ be a $\mu_d$-cover of $\mathbb{P}^1$.
Let $\tau\in \mathrm{Aut}(X)$ be the automorphism of order $d$ associated with $\zeta_d$ under
the inclusion $\mu_d \subset \mathrm{Aut}(X)$. 
Denote by $B$ a labeling of the branch points of $\hh$. 
Let $y$ be a general point of $\mathbb{P}^1$. 
For $1 \leq i \leq n$, let $\rho_i$ be a small loop based at $y$ winding once around the $i$-th branch point. 
Let $x$ be an inverse image of $y$.
If $\tilde{\rho}_{i,x}$ denotes the lift of $\rho_i$ based at $x$, then 
the end point of $\tilde{\rho}_{i,x}$ equals $\tau^{a_i}(x)$ for some integer $0< a_i<d$. 
The vector $a = (a_1, \ldots, a_n)$ is the inertia type of $\hh$.
\end{definition}

To define the inertia type of a $\mu_d$-cover over $k$, one can either lift the cover $h$ to characteristic $0$, or 
use Grothendieck's theorem about the prime-to-$p$ fundamental group of $\mathbb{P}^1-B$, or work exclusively over $k$ by 
replacing loops with isomorphisms of fiber functors.
 
\begin{remark} \label{Rswitch}
A labeling of the branch points is necessary to define the inertia type. 
Also, replacing $\tau$ by $\tau^\ell$ for any $\ell \in (\ZZ/d\ZZ)^\ast$ 
multiplies the inertia type of the $\mu_d$-cover by $\ell^{-1}$. 
The inertia type of a $\mu_d$-cover does not depend on the choice of the pre-image $x$ of $y$.
\end{remark}

\subsubsection{Isomorphisms and automorphisms}

Two $\mu_d$-covers with inertia type $a$ are {\it isomorphic} if there exists a commutative diagram
\beq
\xymatrix{X_1{\ar@(ul,ur)^{\tau_1}}   \ar[d]_{\hh_1} \ar[r]^\varphi & X_2{\ar@(ul,ur)^{\tau_2}}   \ar[d]^{\hh_2} \\
(\mathbb{P}^1,B_1) \ar[r]^{\nu} & (\mathbb{P}^1,B_2),
}
\eeq
where $\varphi$  is a $\mu_d$-equivariant isomorphism, and $\nu$ is an automorphism of $\mathbb{P}^1$ that maps the $i$-th branch point of $\hh_1$ to the $i$-th branch point of $\hh_2$ for each $1 \leq i \leq n$.

An {\it automorphism} of a $\mu_d$-cover $\hh$ is the datum of a commutative diagram:
\beq
\xymatrix{X{\ar@(ul,ur)^{\tau}}   \ar[dr]_{\hh} \ar[rr]^\varphi & & X{\ar@(ul,ur)^{\tau}}   \ar[dl]^{\hh} \\
& \mathbb{P}^1 &
}
\eeq
where $\varphi$ is a $\mu_d$-equivariant automorphism of $X$.

Let $\mathrm{Aut}(X, \tau)$ denote the normalizer of $\tau$ in $\mathrm{Aut}(X)$.
The structure and size of $\mathrm{Aut}(X, \tau)$ do not depend on the choice of $\tau$.
In many cases, $\mathrm{Aut}(X, \tau) = \langle \tau \rangle$.
Note that $\mathrm{Aut}(X, \tau)$ is
the automorphism group of the $\mu_d$-cover determined by $(X,\tau, B)$.

\subsubsection{The genus and signature}

Suppose $\hh: X \to {\mathbb P}^1$ is a $\mu_d$-cover with inertia type $a$.
Then $X$ has an affine equation of the form \eqref{Ecurvegenerala}.
The genus $g$ of $X$ is the dimension of $H^0(X, \Omega^1)$. 
For $0 \leq j \leq d-1$, let $L_j$ be the $j$-th eigenspace for the action of 
$\mu_d$ on $H^0(X, \Omega^1)$  given by precomposing a holomorphic $1$-form with the automorphism of $X$ 
corresponding to a given element of $\mu_d$; we denote $f_j = {\rm dim}(L_j)$.
Then $f_0 =0$. 
The data of $\vec{f} = (f_1, \ldots, f_{d-1})$ is the \emph{signature} of $\hh$.

\begin{lemma} \label{LRH}
Suppose $\hh: X \to {\mathbb P}^1$ is a $\mu_d$-cover with inertia type $a$.
\begin{enumerate}
\item (Riemann-Hurwitz formula) The genus $g$ of $X$ is
$g =  1-d +\frac{1}{2} \sum_{i=1}^n (d- \mathrm{gcd}(d, a_i))$.
\item See, e.g.\ \cite[Lemma~4.3]{Bouw}. 
For $x \in \QQ$, let $\langle x\rangle$ be its fractional part. 
If $1 \leq j \leq d-1$, then
\[ f_j = -1+\sum_{i=1}^n\langle\frac{-ja_i}{d}\rangle.\]
\end{enumerate}
\end{lemma}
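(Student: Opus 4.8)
The plan is to prove Lemma~\ref{LRH} by a direct application of the Riemann--Hurwitz formula for part (1) and a careful eigenspace decomposition of the sheaf of differentials for part (2). For part (1), the cover $\hh: X \to \mathbb{P}^1$ is a $\mu_d$-cover, so the degree is $d$, and the base $\mathbb{P}^1$ has genus $0$. The key observation is that the ramification over each branch point is tame, since $p \nmid d$, so there is no wild contribution. Above the $i$-th branch point, the fiber consists of $\gcd(d, a_i)$ points, each with ramification index $e_i = d/\gcd(d,a_i)$, and the tame ramification contribution of each such point to the different is $e_i - 1$. Summing over the $\gcd(d,a_i)$ points above the $i$-th branch point gives a total contribution of $\gcd(d,a_i)(e_i-1) = d - \gcd(d,a_i)$. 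Plugging into Riemann--Hurwitz, $2g - 2 = d(2\cdot 0 - 2) + \sum_{i=1}^n (d - \gcd(d,a_i))$, and solving for $g$ yields the stated formula.

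For part (2), the plan is to compute the dimension of each eigenspace $L_j$ via the relative dualizing sheaf and the decomposition of the pushforward $\hh_* \Omega^1_X$ into its $\mu_d$-isotypic pieces. First I would use the fact that $H^0(X, \Omega^1) = H^0(\mathbb{P}^1, \hh_* \Omega^1_X)$ and that, because the $\mu_d$-action is compatible with $\hh$, the pushforward splits as a direct sum $\bigoplus_{j=0}^{d-1} \mathcal{L}_j$ of line bundles on $\mathbb{P}^1$ indexed by the characters of $\mu_d$, where $L_j = H^0(\mathbb{P}^1, \mathcal{L}_j)$. The degree of each $\mathcal{L}_j$ is determined by the local monodromy data at the branch points: near the $i$-th branch point with inertia $a_i$, the $j$-th eigensheaf acquires a local contribution governed by the fractional part $\langle -j a_i / d \rangle$, which records the order of vanishing or pole forced by the ramification. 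Assembling these local contributions across all $n$ branch points and applying Riemann--Roch on $\mathbb{P}^1$, one finds $\dim H^0(\mathbb{P}^1, \mathcal{L}_j) = \deg(\mathcal{L}_j) + 1 = -1 + \sum_{i=1}^n \langle -j a_i/d \rangle$, giving the claimed formula.

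The main obstacle is the bookkeeping of the local exponents: one must correctly identify, for each branch point and each character $j$, the precise rounding (fractional-part) contribution to the degree of the eigensheaf, and ensure that the boundary correction terms (the $-1$ coming from $H^0 - H^1$ on $\mathbb{P}^1$ and the canonical twist on the source) combine exactly as stated. A clean way to organize this is to work with the explicit affine model \eqref{Ecurvegenerala}: a basis of $1$-forms in the $j$-th eigenspace has the shape $x^m \prod_i (x - t_i)^{b_{i,j}}\, dx / y^j$ for appropriate integer exponents, and counting the admissible values of $m$ subject to the regularity conditions at all branch points (including the point at infinity) directly produces the sum of fractional parts. Since part (2) is cited from \cite[Lemma~4.3]{Bouw}, I would keep this self-contained but brief, emphasizing the eigensheaf degree computation and deferring the most routine exponent verifications to the reference.

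<br>

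As a consistency check, summing part (2) over $1 \leq j \leq d-1$ should recover the total genus $g = \sum_{j=1}^{d-1} f_j$ from part (1); verifying that $\sum_{j=1}^{d-1}\bigl(-1 + \sum_{i=1}^n \langle -j a_i/d \rangle\bigr)$ equals $1 - d + \tfrac{1}{2}\sum_{i=1}^n (d - \gcd(d,a_i))$ provides a useful internal check on the exponent bookkeeping, and I would note this agreement to confirm the two parts are compatible.
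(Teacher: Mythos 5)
Your proposal is correct, and in fact there is nothing in the paper to compare it against: the paper does not prove Lemma~\ref{LRH} at all, quoting part (1) as the Riemann--Hurwitz formula and citing part (2) from \cite{Bouw}. Your argument is the standard one underlying those references. Part (1) is exactly right: tameness from $p \nmid d$, $\gcd(d,a_i)$ points over the $i$-th branch point each with ramification index $d/\gcd(d,a_i)$, total contribution $d - \gcd(d,a_i)$, and then Riemann--Hurwitz; and your closing consistency check $\sum_{j=1}^{d-1} f_j = g$ is a genuinely useful sanity check that does hold.

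One step in part (2) needs to be tightened. The identity $\dim H^0(\mathbb{P}^1,\mathcal{L}_j) = \deg(\mathcal{L}_j)+1$ is valid only when $\deg(\mathcal{L}_j) \geq -1$; if $\deg(\mathcal{L}_j) \leq -2$ then $h^0 = 0 \neq \deg + 1$, so as written your formula could output a negative ``dimension.'' You should verify that $\sum_{i=1}^n \langle -ja_i/d\rangle \geq 1$ for every $1 \leq j \leq d-1$. This follows from the normalization $\gcd(a_1,\ldots,a_n)=1$ built into the paper's definition of an inertia type: since $\sum_i a_i \equiv 0 \bmod d$, the quantity $\sum_i \langle -ja_i/d\rangle$ is a non-negative integer, and if it were $0$ then $d \mid ja_i$ for all $i$, hence $d$ divides $\gcd(ja_1,\ldots,ja_n) = j\gcd(a_1,\ldots,a_n) = j$, contradicting $1 \leq j \leq d-1$. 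With that observation, the eigensheaf degree is at least $-1$, your Riemann--Roch step is legitimate, and the rest of the exponent bookkeeping (which is exactly what \cite{Bouw} carries out) goes through.
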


If all entries of $a$ are relatively prime to $d$, then $g = (n-2)(d-1)/2$.

\subsubsection{Generically ordinary}

The Frobenius map acts on the set of eigenspaces $\{L_j \mid 1 \leq j \leq d-1\}$ 
via the multiplication-by-$p$ map on the indices.

\begin{proposition} \label{Pbouw} (Special case of \cite[Theorem~6.1]{Bouw})
Let $X_t: y^d = x^{a_1}(x-1)^{a_2}(x-t)^{a_3}$ be a family of $\mu_d$-covers of $\mathbb{P}^1$ branched at $4$ points.
Then the generic curve $X$ in the family is ordinary
if and only if the dimension $f_j$ is constant for each orbit of $\{L_j \mid 1 \leq j \leq d-1\}$ under Frobenius. 
\end{proposition}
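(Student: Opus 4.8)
The plan is to translate the ordinarity of $X$ into the invertibility of the Verschiebung $V$ on $H^0(X,\Omega^1)$, and to analyze this through the eigenspace decomposition $H^0(X,\Omega^1) = \bigoplus_{j=1}^{d-1} L_j$. First I would record the key compatibility. Since the action of $V$ is computed by the Cartier operator $C$, which is $1/p$-linear and commutes with the $\mu_d$-action, $C$ carries $L_j$ into $L_{p^{-1}j}$, where $p^{-1}$ is the inverse of $p$ modulo $d$; this is the Verschiebung counterpart of the Frobenius action by multiplication by $p$ recorded above. Consequently $V$ preserves each block $H_{\mathcal O} = \bigoplus_{j \in \mathcal O} L_j$, where $\mathcal O$ ranges over the orbits of multiplication by $p$ on $\{1,\dots,d-1\}$, and $X$ is ordinary if and only if $V$ restricts to an invertible map on every $H_{\mathcal O}$. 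I would also note at the outset that, by Lemma~\ref{LRH}(2), each $f_j$ depends only on $(d,a)$ and not on the parameter $t$, so the stated signature condition is constant along the family.

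Next I would prove the easy (necessity) direction. Fix an orbit $\mathcal O = \{j_0, p j_0, \dots, p^{r-1} j_0\}$ with $p^r j_0 \equiv j_0 \bmod d$, and write $j_i = p^i j_0$. In the ordering $L_{j_0}, \dots, L_{j_{r-1}}$ the operator $V$ has a cyclic block form whose only nonzero blocks are the maps $V_i \colon L_{j_i} \to L_{j_{i-1}}$. The $r$-fold composite $V^r$ restricted to $L_{j_0}$ therefore factors through each $L_{j_i}$, so its rank is at most $\min_i f_{j_i}$. If $V$ is invertible then $V^r$ is invertible on $L_{j_0}$, which forces $f_{j_0} \le f_{j_i}$ for all $i$; letting the base point range over the orbit gives $f_j$ constant on $\mathcal O$. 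Since the generic curve being ordinary produces at least one ordinary member, and the $f_j$ are $t$-independent, this establishes the forward implication.

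The substance of the proposition is the converse, and this is where I expect the real work. Assuming $f_j$ is constant, say equal to $m_{\mathcal O}$, on each orbit, every block $V_i$ is a square map of size $m_{\mathcal O}$, so $\det\bigl(V^r|_{L_{j_0}}\bigr) = \det(V_1 \cdots V_r)$ is a regular function $D_{\mathcal O}(t)$ on the (irreducible) base of the family. By upper semicontinuity of the corank of $V$, the generic curve is ordinary exactly when $D_{\mathcal O}(t)$ is not identically zero for every orbit $\mathcal O$. To compute $D_{\mathcal O}$ I would use the explicit basis of $L_j$ given by the holomorphic forms $x^{e} y^{-j}\,dx$ in the model \eqref{Ecurvegenerala}, expand $C(x^e y^{-j}\,dx)$ via $C(h^p \eta) = h\,C(\eta)$ and $C(f^{p-1}\,df) = df$ together with the relation $y^d = x^{a_1}(x-1)^{a_2}(x-t)^{a_3}$, and read off the entries of each $V_i$ as explicit truncated binomial coefficients in $t$.

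The main obstacle is precisely the non-vanishing of $D_{\mathcal O}(t)$: the matching of dimensions around the orbit only guarantees that the determinant is a square expression, not a nonzero one, and its entries are delicate sums of binomial coefficients modulo $p$. Showing that some coefficient of $D_{\mathcal O}(t)$ is a nonzero element of $\mathbb{F}_p$ is exactly the four-point content of \cite[Theorem~6.1]{Bouw}, where it is controlled by the $p$-adic data governing the associated hypergeometric system; for this single step I would invoke that result rather than reprove it. Combining the two directions then yields the stated equivalence.
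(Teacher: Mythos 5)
Your proposal is correct, and in substance it rests on the same input as the paper: the paper gives no internal argument for this proposition at all, quoting it outright as a special case of \cite[Theorem~6.1]{Bouw}, and your proof defers to exactly that theorem for the one non-formal step. What you add around the citation is the bookkeeping that the paper leaves implicit, and it is sound: the Cartier operator is $1/p$-linear and commutes with $\tau$, so it carries $L_j$ to $L_{p^{-1}j}$ and hence preserves the orbit blocks $H_{\mathcal{O}}$; invertibility of $V$ on a block forces the composite around the cycle to be invertible on $L_{j_0}$, whose rank is bounded by $\min_i f_{j_i}$, giving the easy direction (together with the observation, via Lemma~\ref{LRH}, that the $f_j$ are independent of $t$); and the converse reduces, by semicontinuity of the corank, to the non-identical-vanishing of the determinant $D_{\mathcal{O}}(t)$ of the cyclic composite, which is precisely what Bouw's theorem supplies in the four-point case. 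This is the right division of labor, since reproving the non-vanishing would amount to redoing Bouw's analysis of the relevant truncated hypergeometric data. One cosmetic caveat: since $V$ is only $p^{-1}$-linear, ``$\det(V_1\cdots V_r)$'' should be read as the determinant of the product of the Cartier--Manin matrices with the appropriate Frobenius twists of their entries; its vanishing locus on the base is nonetheless well defined, so your semicontinuity step goes through unchanged.
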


\begin{remark} \label{Rcongord}
If $p \equiv 1 \bmod d$, then $X_t$ is ordinary for a generic choice of $t$.  This is because
the orbits of Frobenius on $\{L_j \mid 1 \leq j \leq d-1\}$ all have cardinality $1$,
so the condition that the dimension $f_j$ of $L_j$ is constant within each orbit is vacuous.
\end{remark}

\subsection{Moduli spaces of admissible covers}
\label{sec:msac}

Let $\mathcal{M}_g$ (resp.\ $\overline{\mathcal{M}}_g$) denote the moduli space of smooth (resp.\ stable) curves of genus $g$.
Let $\mathcal{M}_{0,n}$ (resp.\ $\overline{\mathcal{M}}_{0,n}$) denote the moduli space of smooth (resp.\ stable) curves of 
genus $0$ with $n$ marked points.

Let $\mathcal{A}_g$ denote the moduli space of principally polarized abelian varieties of dimension $g$.
Let $\tilde{\mathcal{A}}_g$ denote a smooth toroidal compactification of $\mathcal{A}_g$. 

The Torelli morphism $T: \mathcal{M}_g \to \mathcal{A}_g$ sends a curve of genus $g$ to 
its Jacobian; it is an embedding.  The Torelli morphism can be extended to 
$T: \overline{\mathcal{M}}_g \to \tilde{\mathcal{A}}_g$. 

Let ${\mathcal A}_{d, a}$ denote the moduli space of admissible $\mu_d$-covers of a rational curve $R$
branched at $n$ labeled
points and having inertia type $a$; (this space appeared first in \cite[page 57]{harrismumford};  \cite[Definition 2.7]{COSlambda1} gives a definition adapted to the case we are studying). It is a compactification of the moduli space for $\mu_d$-covers of $\mathbb{P}^{1}$.
Roughly speaking, the boundary points parametrize projection maps obtained from automorphisms of order $d$ on nodal curves whose quotient curves are nodal rational curves.

\begin{remark} 
In fact, ${\mathcal A}_{d, a}$ is the stack of twisted stable maps from an orbifold rational curve to $B\mu_d$ with inertia at the orbifold points given by $a$, as defined in \cite{acv:ac}.  The authors show that this stack is smooth and it is the normalization of the admissible cover space of Harris and Mumford \cite{harrismumford}. The distinction between Harris-Mumford admissible covers and twisted stable maps  is technical and will not appear in this paper, but it is important for the intersection theory to run smoothly.  In particular, a reader familiar only with the Harris-Mumford perspective can follow the arguments, but will have to accept the presence of automorphism related factors in the intersection theory).  
With the twisted stable maps perspective, the signature can also 
be determined with an orbifold Riemann-Roch computation, see \cite[Theorem~7.2.1]{grabervistoli}.
\end{remark}

There is a morphism 
\begin{equation} \label{Edefpi}
\pi=\pi_{d,a}: {\mathcal A}_{d, a} \to \overline{M}_{0,n},
\end{equation}
where the isomorphism class of a $\mu_d$-cover $\hh: X \to R$ is sent to the labeled set of $n$ branch points.
The map $\pi$ is a bijection on closed points, but ${\mathcal A}_{d, a}$ is a $\mu_d$-gerbe over $\overline{M}_{0,n}$. 
This roughly means that it has generic isotropy $\mu_d$. 

There is a morphism
\begin{equation} \label{Edefphi}
\phi=\phi_{d,a}: {\mathcal A}_{d, a} \to \Mgbar,
\end{equation}
where the isomorphism class of a $\mu_d$-cover $\hh: X \to R$ is sent to the isomorphism class of $X$.

\begin{definition} \label{def:od}
A {\it one dimensional family} $\cF$ of admissible $\mu_d$-covers of a rational curve with inertia type $a$ is a morphism $\varphi_{\mathcal{F}}: \cB \to \mathcal{A}_{d,a}$, where $\cB$ is a smooth, proper, irreducible, one dimensional Deligne--Mumford (DM) stack. 
\end{definition}

Such a family $\mathcal{F}$ induces a map 
$\phi^\mathcal{F} = \phi_{d,a} \circ \varphi_{\mathcal{F}}: \cB \to {\overline{\mathcal M}}_g$. 

\begin{definition}[Notation]
We denote by ${\mathcal M}_{d,a}$ (resp.\ $\mathcal{M}^{\mathcal{F}}$)  the image 
under $\phi_{d,a}$ (resp.\ $\phi^\mathcal{F}$) of ${\mathcal A}_{d, a}$ (resp.\ $\cB$)
in ${\overline{\mathcal M}}_g$ with its reduced induced structure.
We denote by $\delta_{d,a}$ (resp.\ $\delta^{\mathcal{F}}$) the degree of $\phi_{d,a}$ 
(resp.\ $\phi^\mathcal{F}$) onto its image.
\end{definition}

\begin{lemma}\label{lem:deg}
Suppose $g \geq 2$ and $n \geq 4$.
The degree $\delta_{d,a}$ of the map $\phi_{d,a}: \mathcal{A}_{d,a} \to \mathcal{M}_{d,a}$ equals the cardinality of the 
set $S_{d,a} := \{( \ell, \sigma)\in (\ZZ/d \ZZ)^\ast \times S_n \mid  \sigma(a) = \ell^{-1} a\}$.
\end{lemma}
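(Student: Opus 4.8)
The plan is to compute the degree $\delta_{d,a}$ by understanding the fiber of $\phi_{d,a} \colon \mathcal{A}_{d,a} \to \mathcal{M}_{d,a}$ over a generic point, which amounts to counting how many distinct $\mu_d$-cover structures $(\tau, B)$ a fixed generic curve $X$ carries that produce isomorphic sources with the prescribed inertia type $a$. Since $\phi_{d,a}$ is a map of one-dimensional stacks and $\mathcal{M}_{d,a}$ is its image, the degree is the number of preimages of a general closed point. I would fix a generic $X$ admitting a cyclic $\mu_d$-cover of $\mathbb{P}^1$ with inertia type $a$, and for which $\mathrm{Aut}(X)$ is as small as possible (so that the only automorphisms of $X$ are those coming from the $\mu_d$-action and its normalizer); genericity within the one-dimensional family guarantees this.

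\emph{Key steps.} First I would fix one base cover $\hh \colon X \to \mathbb{P}^1$ with automorphism $\tau$ of order $d$ and labeled branch locus $B$, giving a distinguished point of $\mathcal{A}_{d,a}$ over $[X] \in \mathcal{M}_{d,a}$. Every other point in the fiber corresponds to another triple $(X, \tau', B')$ with $X/\langle \tau' \rangle \cong \mathbb{P}^1$ and inertia type again equal to $a$. The second step is to argue that for generic $X$ the cyclic subgroup $\langle \tau \rangle \subset \mathrm{Aut}(X)$ is unique, so that any $\tau'$ of order $d$ whose quotient is $\mathbb{P}^1$ generates the same subgroup, forcing $\tau' = \tau^{\ell}$ for some $\ell \in (\ZZ/d\ZZ)^{\ast}$. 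The third step records how changing the generator and relabeling the branch points alters the inertia type: by Remark~\ref{Rswitch}, replacing $\tau$ by $\tau^{\ell}$ multiplies the inertia type by $\ell^{-1}$, and relabeling the $n$ branch points by a permutation $\sigma \in S_n$ permutes the entries of the inertia vector by $\sigma$. Hence a pair $(\ell, \sigma)$ yields a point of the fiber mapping to the same $[X]$ precisely when the new inertia type matches $a$, i.e.\ $\sigma(a) = \ell^{-1} a$, which is exactly the condition defining $S_{d,a}$.

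\emph{Bijection with the fiber.} I would then establish that the assignment $(\ell, \sigma) \mapsto (X, \tau^{\ell}, \sigma^{-1}(B))$ sets up a bijection between $S_{d,a}$ and the set of points of $\mathcal{A}_{d,a}$ lying over the generic $[X]$. Surjectivity follows from the uniqueness of $\langle \tau \rangle$ together with the fact that any admissible cover structure on $X$ with inertia $a$ arises from some choice of generator and labeling. Injectivity amounts to checking that two pairs give isomorphic $\mu_d$-covers (in the sense of the commutative square defining isomorphism) only when they coincide; here the genericity hypothesis that $\mathrm{Aut}(X, \tau) = \langle \tau \rangle$ ensures no extra identifications. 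Since $\delta_{d,a}$ is the degree of $\phi_{d,a}$ onto its image, counting this generic fiber gives $\delta_{d,a} = \# S_{d,a}$.

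\emph{Main obstacle.} The delicate point is controlling the automorphisms of the generic curve $X$: the clean count $\# S_{d,a}$ holds only because, for generic $X$ in the family, $X$ carries no cyclic degree-$d$ quotient to $\mathbb{P}^1$ beyond $\langle \tau \rangle$ and no unexpected automorphisms. I would therefore need to invoke (or prove) that having such an extra structure is a closed, lower-dimensional condition on the $(n-3)$-dimensional moduli $\overline{M}_{0,n}$, so the generic point avoids it; the hypotheses $g \geq 2$ and $n \geq 4$ are what make $\mathrm{Aut}(X)$ finite and the cyclic group rigid enough for this argument. Reconciling the stacky $\mu_d$-gerbe structure of $\mathcal{A}_{d,a}$ over $\overline{M}_{0,n}$ with the naive point-count — so that the generic isotropy $\mu_d$ does not distort the degree onto the image in $\overline{\mathcal{M}}_g$ — is the subtle bookkeeping step, and is precisely where the distinction between the $\mu_d$-cover and its bare source curve $X$ must be handled carefully.
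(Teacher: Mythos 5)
Your overall strategy---count the fiber of $\phi_{d,a}$ over a generic $[X]$, reduce to a single cyclic subgroup $\langle \tau \rangle$, and match structures with pairs $(\ell,\sigma)$ via Remark~\ref{Rswitch}---is the same as the paper's, and your derivation of the condition $\sigma(a)=\ell^{-1}a$ is correct. But there is a genuine gap in the counting step. You identify $\delta_{d,a}$ with the number of \emph{isomorphism classes} of labeled covers lying over a generic $[X]$, and you prove injectivity of $(\ell,\sigma)\mapsto (X,\tau^{\ell},\sigma^{-1}(B))$ by invoking ``the genericity hypothesis that $\mathrm{Aut}(X,\tau)=\langle\tau\rangle$.'' That hypothesis is false for many families to which this lemma is applied in the paper: the table in Corollary~\ref{Cmoonen} lists $z_{d,a}=\#\mathrm{Aut}(X,\tau)$ strictly larger than $d$ for $M[3]$, $M[4]$, $M[5]$, $M[7]$, $M[13]$, and Lemma~\ref{L11hyp} shows $\mathrm{Aut}(X_t,\tau)\simeq C_2\times D_d$ for the generic curve in the family of Section~\ref{ScaseB}. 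When the generic curve has such extra automorphisms, distinct pairs $(\ell,\sigma)$ \emph{do} give isomorphic labeled covers: any $\alpha\in \mathrm{Aut}(X,\tau)\smallsetminus\langle\tau\rangle$ descends to a nontrivial permutation of the branch points and conjugates $\tau$ to some power $\tau^m$, so it identifies $(X,\tau,B)$ with $(X,\tau^m,\bar{\alpha}(B))$. Concretely, for $M[13]$ one has $\#S_{d,a}=4$ but only $2$ isomorphism classes of labeled covers over the generic curve, so your proposed bijection with the fiber of the coarse map fails.

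The lemma is nevertheless true because $\delta_{d,a}$ is a \emph{stacky} degree, defined by $\phi_{d,a,*}[\mathcal{A}_{d,a}]=\delta_{d,a}[\mathcal{M}_{d,a}]$, not a naive count of preimages. Pushing fundamental classes through the coarse moduli maps, and using that the generic isotropy of $\mathcal{A}_{d,a}$ is $\mu_d$ while that of $\mathcal{M}_{d,a}$ is $\mathrm{Aut}(X)$, one gets $\delta_{d,a}=(\#\text{coarse fiber})\cdot \#\mathrm{Aut}(X)/d$. Equivalently, $\delta_{d,a}$ counts actual cover structures on the fixed curve $X$ (a choice of generator $\tau^{\ell}$ and of labeling, with no quotient by $\mathrm{Aut}(X)$), each isomorphism class in the coarse fiber accounting for exactly $\#\mathrm{Aut}(X)/d$ such structures; the structures supported on $\langle\tau\rangle$ are precisely indexed by $S_{d,a}$. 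This is how the paper's proof proceeds: its ``inverse images'' are these structures, so no quotient by extra automorphisms ever arises. In your write-up this automorphism factor is deferred to the final ``bookkeeping step'' and never carried out, so your argument contains two compensating errors (a failed injectivity claim and a naive notion of degree) that cancel only by accident. Your remaining concern---that the generic $X$ might carry a second cyclic subgroup of order $d$ with quotient $\mathbb{P}^1$ and inertia type $a$---is legitimate, is also implicit in the paper's assertion that all inverse images ``correspond to the same map $\hh$,'' and your proposed genericity argument for that point is reasonable.
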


\begin{proof}
Let $X$ be a generic point in the image of $\phi_{d,a}$.
Let $\hh: X \to R$ be an admissible $\mu_d$-cover in $\phi_{d,a}^{-1}(X)$.
Let $B$ (resp.\ $\bar{B}$) be the set of labeled (resp.\ unlabeled) branch points of $\hh$.
Then $B$ is a generic point of $\overline{M}_{0,n}$.
Since $n \geq 4$, the only automorphism of $\mathbb{P}^1$ that fixes $B$ is the identity. 
So the other inverse images of $X$ under $\phi_{d,a}$ correspond to the same map $\hh: X\to R$, 
but with possibly different labelings of $\bar{B}$ and different inclusions $\mu_d \subset \mathrm{Aut}(X)$. 
The set of options compatible with the inertia type $a$ is indexed by $S_{d,a}$. 
Here, $\ell \in \mathrm{Aut}(\mu_d) \simeq (\ZZ/d \ZZ)^*$
corresponds to the power of the automorphism $\tau$.
Also, $\sigma$ gives a permutation of the labelings of $\bar{B}$ 
that satisfy the constraint that the $\mu_d$-cover generated by $\tau^\ell$ has inertia type $a$; 
so $\sigma(a) = \ell^{-1} a$ by Remark~\ref{Rswitch}.
\end{proof}

\begin{notation}
In numerous places in this paper, including \eqref{Legendre}, \eqref{E111}, and \eqref{Ehyp},
we describe $\cF$ using an equation of the form $y^d= f(x)(x-t)$.
Here $x$ is the parameter on $\mathbb{P}^1$ and $t$ is the parameter of the family.
Implicitly, the base of this family is $\mathbb{P}^1$: the fibers over the values of $t$ which coincide with a root of $f(x)$ 
are the admissible $\mu_d$-covers in the compactification of the family.
These are obtained by taking the closure of the image via the moduli map of the punctured base
in the space of admissible covers.
\end{notation}

\subsection{The Hodge bundle and Chern classes}

We refer to \cite[Chapters~6-9]{fulton:it} for background on the Chow ring $A^*(\tilde{\cA}_g)$.
Loosely speaking, the Chow ring is generated by equivalence classes of closed subspaces 
under an equivalence relation called rational equivalence. 
We denote the class of a subspace $S$ by $[S]$.
The Chow ring is naturally graded by codimension, denoted in the upper index. 
When only the additive structure is concerned, it is sometime useful to introduce a grading by dimension; in this case a graded piece is given an appropriate lower index.

Recall the morphisms $\pi$ from \eqref{Edefpi} and $\phi$ from \eqref{Edefphi}.
Since $\pi$ has generic isotropy $\mu_d$, 
the degree of $\pi$ is $1/d$, in the sense that pushing forward the fundamental class produces a factor of $1/d$:
$\pi_\ast([{\mathcal A}_{d, a}]) = [\overline{M}_{0,n}]/d$. 
Similarly, $\phi^{\mathcal{F}}_{\ast}([\cB])  = \delta^{\mathcal{F}}[\mathcal{M}^{\mathcal{F}}]$.

The {\it Hodge bundle} $\mathbb{E}_g\to \cA_g$ is a locally free sheaf of rank $g$ on $\cA_g$.
For $A/S$ an abelian scheme, the sections of $\mathbb{E}_g$ are given by 
$e^* \Omega^1_{A/S}$ on $S$, where $\Omega^1_{A/S}$ is the
relative sheaf of differentials and $e: S \to A$ is the identity section.
By \cite{faltingschai}, the Hodge bundle extends to a locally free sheaf on $\tilde{\cA}_g$.

The Chern classes of $\mathbb{E}_g$ are defined over $\mathbb{Z}$ and can be studied over any field.  
They yield classes in $A^*(\tilde{\cA}_g)$. 
We refer to \cite[Chapter~3]{fulton:it} for background on the theory of Chern classes of a vector bundle.
For example, the first Chern class commutes with pullback
and the first Chern class of a line bundle may be represented by the divisor of one of its meromorphic sections.

We are especially interested in the first Chern class:
 \begin{equation}
 \lambda_1:= c_1(\mathbb{E}_g) \in A^*(\tilde{\cA}_g). 
\end{equation}
The class $\lambda_1$ can be represented by the codimension one cycle given 
by the divisor of a meromorphic section of $\det(\Hod)$. 

The Hodge bundle on $\overline{\mathcal{M}}_g$ (resp.\  ${\mathcal A}_{d, a}$) is the pullback of $\Hod$ 
by the Torelli morphism (resp.\ and by $\phi_{d,a}$). 
Over $\mathcal{M}_g$, the fiber of $\mathbb{E}_g$ over a moduli point $X$ is naturally identified with $H^0(X, \Omega^1)$. 
A reference for the extension to the boundary $ \overline{\mathcal{M}}_g\smallsetminus {\mathcal{M}}_g$ is \cite{mumford}.
On these moduli spaces, we continue to denote the Hodge bundle by $\Hod$ 
and its first Chern class by $\lambda_1$.

\begin{definition} \label{def:la}
Let $\mathcal{F}$ be a one dimensional family of admissible $\mu_d$-covers of a rational curve.
Then
\[
{\mathrm{deg}}^{\cF}(\lambda_1):= \int_\cB (\phi^\mathcal{F})^\ast (\lambda_1).
\]
\end{definition}

\subsection{Cycle class of the non-ordinary locus}

Let $V_{g-1}$ denote the locus on ${\mathcal A}_g$ of non-ordinary abelian varieties.
This is the same as the locus of abelian varieties with positive $a$-number.

By \cite[Section 9, page 625]{EVdG}, 
the cycle class of $V_{g-1}$ is $[V_{g-1}] = (p-1) \lambda_1$. We briefly summarize the argument.
The locus $V_{g-1}$ is given by the vanishing of the map 
\[\mathrm{det}(V ) : \mathrm{det}(\mathbb{E}_g ) \to
\mathrm{det}(\mathbb{E}_g^{(p)}).\] 
(Here $\mathbb{E}_g^{(p)}$ denotes the Hodge bundle on the pullback of $\mathcal{A}_g$ by the absolute Frobenius
of $\bar{\FF}_p$.)
By tensoring with $(\mathbb{E}_g)^\vee$, 
this locus is given by the zero locus of the
section  
\[s_V: \mathcal{O}_{\mathcal{A}_g} \to \det(\mathbb{E}_g)^\vee \otimes \mathrm{det}(\mathbb{E}_g^{(p)}).\] 
So $[V_{g-1}] = c_1( \det(\mathbb{E}_g)^\vee \otimes \mathrm{det}(\mathbb{E}_g^{(p)}))$. 
By the definition of $\mathbb{E}_g^{(p)}$, it follows that $c_1(\mathbb{E}_g^{(p)}) = p\lambda_1$.
The first Chern class is additive for line bundles and its sign is reversed by dualization.
So $[V_{g-1}] = -\lambda_1 + p \lambda_1 =(p-1)\lambda_1$.

\section{Main Theorems} \label{SproofmaintheoremF}

In this section, we prove the main theorem stated in the introduction. The two equalities composing its statement 
are proven individually in Theorems~\ref{thm:secondeq} and \ref{thm:firsteq}.
We first recall our definition of the mass formula. 

\begin{definition}
Let $\mathcal{F}$ be a one dimensional family of admissible $\mu_d$-covers of a rational curve with inertia type $a$.  
We define the {\it mass formula} to be
\begin{equation} \label{def:massF}\mu(\cF,p) =\int_{\Mgbar}[{\mathcal M}^{\cF}] \cdot [V_{g-1}] .\end{equation}
\end{definition}

We assume that $p$ is such that 
the source curve of the generic point of the family ${\mathcal{F}} = {\mathcal{F}}_p$ is ordinary.
This implies that $V_{g-1}$ is dimensionally transversal to ${\mathcal M}^{\mathcal{F}}$.  (Two subspaces of a given ambient space are said to be dimensionally transversal if the codimension of their intersection equals the sum of their codimensions.) 
In this situation, the mass formula is the degree of a zero dimensional cycle supported on the set of 
non-ordinary curves of $\cF$. 


\subsection{First main result}

We evaluate the mass formula using tautological intersection theory.

\begin{theorem} \label{thm:secondeq}
For $\cF$ a one dimensional family of admissible $\mu_d$-covers of a rational curve with inertia type $a$, 
and $p \nmid d$ a prime such that the generic curve in the characteristic $p$ fiber $\cF_p$ is ordinary, 
we have:
\begin{equation} \label{EintroFrepeat}
\mu(\mathcal{F},p) 
= (p-1) {\mathrm{deg}}^{\cF}(\lambda_1)/\delta^{\mathcal{F}},
\end{equation}
where the symbols on the right hand side are as in Definitions \ref{def:od}, \ref{def:la}.
\end{theorem}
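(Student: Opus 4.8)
The plan is to strip the intersection number $\mu(\cF,p)$ down to a computation involving only $\lambda_1$, and then transport that computation from $\Mgbar$ back to the base $\cB$ using the projection formula. First I would invoke the cycle class identity $[V_{g-1}] = (p-1)\lambda_1$ recorded at the end of Section~\ref{sec:background} (following \cite{EVdG}). That identity is established on $\cA_g$, so the one point that needs checking is that it holds for the class actually being integrated against in \eqref{def:massF}, namely the version of $V_{g-1}$ living on $\Mgbar$. Since this class is the Torelli pullback of the non-ordinary locus and the Hodge bundle on $\Mgbar$ is by definition $T^\ast \Hod$, we have $T^\ast \lambda_1 = \lambda_1$, so the identity descends and
\[
\mu(\cF,p) = \int_{\Mgbar} [\mathcal{M}^{\cF}] \cdot (p-1)\lambda_1 = (p-1)\int_{\Mgbar} [\mathcal{M}^{\cF}] \cdot \lambda_1.
\]

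Next I would apply the projection formula to the forgetful morphism $\phi^{\mathcal{F}} \colon \cB \to \Mgbar$. Using the pushforward relation $\phi^{\mathcal{F}}_{\ast}([\cB]) = \delta^{\mathcal{F}}[\mathcal{M}^{\mathcal{F}}]$ from Section~\ref{sec:background} together with Definition~\ref{def:la}, this gives
\[
{\mathrm{deg}}^{\cF}(\lambda_1) = \int_\cB (\phi^{\mathcal{F}})^\ast \lambda_1 = \int_{\Mgbar} \lambda_1 \cdot \phi^{\mathcal{F}}_{\ast}([\cB]) = \delta^{\mathcal{F}} \int_{\Mgbar} \lambda_1 \cdot [\mathcal{M}^{\mathcal{F}}].
\]
Solving for $\int_{\Mgbar} \lambda_1 \cdot [\mathcal{M}^{\mathcal{F}}]$ and substituting into the previous display produces exactly $\mu(\cF,p) = (p-1){\mathrm{deg}}^{\cF}(\lambda_1)/\delta^{\mathcal{F}}$, which is \eqref{EintroFrepeat}.

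The ordinarity hypothesis on the generic point of $\cF_p$ plays the role of guaranteeing dimensional transversality, so that $[\mathcal{M}^{\cF}]\cdot[V_{g-1}]$ is genuinely a zero-dimensional class supported on the non-ordinary curves; this is what makes $\mu(\cF,p)$ a meaningful weighted count, although the formal chain of equalities above holds at the level of Chow degrees in any case. The main obstacle I anticipate is the stacky bookkeeping rather than the algebra: $\phi^{\mathcal{F}}$ is a morphism of DM stacks, and one must be sure that $\delta^{\mathcal{F}}$ is the degree onto the image taken with its reduced induced structure and that the generic-isotropy and automorphism factors are consistently absorbed into the pushforward relation $\phi^{\mathcal{F}}_{\ast}([\cB]) = \delta^{\mathcal{F}}[\mathcal{M}^{\mathcal{F}}]$. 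This is exactly the point where the twisted-stable-maps versus Harris--Mumford distinction flagged earlier becomes relevant, and I would take care to apply the projection formula with that single degree convention used uniformly.
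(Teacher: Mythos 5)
Your proposal is correct and follows essentially the same route as the paper's proof: both rest on the cycle class identity $[V_{g-1}]=(p-1)\lambda_1$ from \cite{EVdG} transported to $\Mgbar$, the pushforward relation $\phi^{\mathcal{F}}_{\ast}([\cB])=\delta^{\mathcal{F}}[\mathcal{M}^{\mathcal{F}}]$, and the projection formula for $\phi^{\mathcal{F}}$ followed by pushforward to a point. The only cosmetic difference is ordering (the paper substitutes the pushforward relation first and then applies the projection formula, while you compute ${\mathrm{deg}}^{\cF}(\lambda_1)$ and solve), and the paper additionally cites the facts that every component of $V_{g-1}$ has dimension $3g-4$ and generically parametrizes smooth curves, which is the precise justification behind your "the identity descends" step.
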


\begin{proof}
By \eqref{def:massF}, $\mu(\cF,p) = \int_{\Mgbar}[{\mathcal M}^{\cF}] \cdot [V_{g-1}]$.
The intersection $[\mathcal{M}^{\mathcal{F}}]\cdot [V_{g-1}]$ determines a class in the Chow ring of $\Mgbar$. We compute its degree. 

By \cite[Theorem~2.3]{fabervandergeer}, every component of $V_{g-1}$ has dimension $3g-4$.
Furthermore,  the generic point of each component of $V_{g-1}$ represents a smooth curve
by \cite[Lemma~3.2]{AP:mono}.  
By \cite[Section 9, page 625]{EVdG}, the class of the non-ordinary locus $[V_{g-1}]$ on $\mathcal{A}_g$ is $(p-1)\lambda_1$.
So the class $[V_{g-1}]$ on $\Mgbar$ equals $(p-1)\lambda_1\in A^1(\Mgbar)$.

Since $\phi^{\mathcal{F}}_{\ast}([\cB])  = \delta^{\mathcal{F}}[\mathcal{M}^{\mathcal{F}}]$,
 in the Chow ring of $\Mgbar$, we have:
 \beq \label{eq:cmul}
[\mathcal{M}^{\mathcal{F}}]\cdot [V_{g-1}] = \frac{\phi_{\ast}^{\mathcal{F}}([\cB])}{ \delta^{\mathcal{F}}} \cdot (p-1)\lambda_1.
 \eeq
 It thus suffices to show that the degree of $\phi_{\ast}^{\mathcal{F}}([\cB]) \cdot \lambda_1$ equals 
 ${\mathrm{deg}}^{\cF}(\lambda_1)$.
In order to compute the degree of this class,
consider the commutative  diagram 
$$
\xymatrix{
\mathcal{A}^{\mathcal{F}} \ar[rr]^{\phi^{\mathcal{F}}} \ar[dr]_{\tilde{c}}& & \overline{\mathcal{M}}_g \ar[dl]^{c}\\
 & pt. &
}
$$
where $\tilde{c}$ and $c$ are constant functions.      
In the next formula \eqref{eq:okstepone}, the $\lambda_1$ in the left entry denotes the class on 
$\overline{\mathcal{M}}_g$, and the other two denote its pullback to $\mathcal{A}^{\mathcal{F}}$.
Using the projection formula for the morphism $\phi^{\mathcal{F}}$, one obtains

\begin{equation}\label{eq:okstepone}
\phi_{\ast}^{\mathcal{F}}([\cB])\cdot \lambda_1
=  {\phi_{\ast}^{\mathcal{F}}} \left( [\cB] \cdot \lambda_1\right) = {\phi_{\ast}^{\mathcal{F}}} \left( \lambda_1\right).
\end{equation}
Pushing forward  \eqref{eq:okstepone} via  $c$:
\begin{eqnarray*}\label{eq:lhsF}
c_\ast(\phi_{\ast}^{\mathcal{F}}([\cB])\cdot \lambda_1) 
& = & c_\ast{\phi_{\ast }^{\mathcal{F}}} \left( \lambda_1\right) \\
& = & \tilde{c}_\ast \left( \lambda_1\right)  
= {\mathrm{deg}}^{\cF}(\lambda_1)[pt.].
\end{eqnarray*}

This completes the proof because the degrees of the classes are the same.
\end{proof}

\begin{remark} \label{Rgeneral}
The definitions and results in this section apply more generally. 
Any proper one dimensional family of curves $\cF$ determines a class $[\mathcal{C}^\cF] \in A_1(\Mgbar)$, 
and one can define a mass formula for $\cF$ as the degree of the  class $[\mathcal{C}^\cF]\cdot [V_{g-1}]$. 
Then it is immediate that Theorem~\ref{thm:secondeq} holds. 
Since in this work, we only explore applications for families of cyclic covers of rational curves, 
we restrict to that case in the statements.
\end{remark}

\subsection{The mass formula and the multiplicity}

We continue to assume that $V_{g-1}$ is dimensionally transversal to ${\mathcal M}^{\mathcal{F}}$.
For a curve $X\in \Mgbar$, we denote by $m_X$ the multiplicity of intersection of the cycles $ {\mathcal M}^{\calF}_{d,a}$ and $V_{g-1}$ at $X$.  Then
\beq \label{eq:mfwm}
\mu(\cF,p) = \sum_{[X] \in {\mathcal M}^{\cF} \cap V_{g-1}} \frac{m_X}{\#\mathrm{Aut}(X)}.
\eeq

Let $\aaa_X$ denote the order of vanishing of a local equation for the determinant of the Cartier operator at $X$.
We determine $m_X$ in terms of $\aaa_X$ and 
the index of the normalizer of $\tau$ in $\mathrm{Aut}(X)$. 

\begin{proposition} \label{PmxF}
Suppose $p \nmid d$ is such that $V_{g-1}$ is dimensionally transversal to ${\mathcal M}^{\mathcal{F}}$.
If $X \in {\mathcal M}^{\mathcal{F}}$, then $m_X = \aaa_X [\mathrm{Aut}(X,\tau):\mathrm{Aut}(X)]$.
\end{proposition}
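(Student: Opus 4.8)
The plan is to compute the intersection multiplicity $m_X$ inside an \'etale (Kuranishi) chart of $\overline{\mathcal{M}}_g$ at $X$, and to compare it, branch by branch, with the order of vanishing $\aaa_X$ of the Cartier determinant along the family. First I would present $\overline{\mathcal{M}}_g$ near $X$ as a quotient stack $[U/\mathrm{Aut}(X)]$, where $U$ is the smooth versal deformation space of dimension $3g-3$ and the origin $o$ corresponds to $X$. Since $[V_{g-1}]$ is the divisor of the section $\det(V)$ of $\det(\mathbb{E}_g)^{\vee}\otimes\det(\mathbb{E}_g^{(p)})$, after trivializing this line bundle on $U$ the locus $V_{g-1}$ is cut out by a single regular function $c\in\mathcal{O}_U$, namely the determinant of the Cartier operator. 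Writing $\tilde C\subset U$ for the preimage of the reduced curve $\mathcal{M}^{\mathcal{F}}$, the dimensional transversality hypothesis guarantees that $c$ vanishes on no branch of $\tilde C$ and that $m_X=\mathrm{length}_o(\tilde C\cap\{c=0\})$; this is the $\mathrm{Aut}(X)$-invariant chart length that enters \eqref{eq:mfwm} with weight $1/\#\mathrm{Aut}(X)$.

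Next I would isolate the cover directions. Because $p\nmid d$, the automorphism $\tau$ acts on $U$ with smooth fixed locus $W:=U^{\mu_d}$, and $W$ is precisely a chart for $\mathcal{A}_{d,a}$ at the cover $P=[X,\tau]$, with $\mathcal{A}_{d,a}\cong[W/\mathrm{Aut}(X,\tau)]$ locally and $\phi_{d,a}$ induced by the closed immersion $W\hookrightarrow U$ (equivariant for $\mathrm{Aut}(X,\tau)\hookrightarrow\mathrm{Aut}(X)$). The family determines a branch $\gamma\subset W$ through $o$, and $\aaa_X=\mathrm{ord}_o(c|_{\gamma})$ since $c$ restricts from $U$ to $W$. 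The preimage $\tilde C$ is the $\mathrm{Aut}(X)$-orbit of $\gamma$, and because $c$ is $\mathrm{Aut}(X)$-invariant (it is a canonical construction on the versal family) every branch of this orbit meets $\{c=0\}$ with the same multiplicity $\aaa_X$. Hence $m_X=N\cdot\aaa_X$, where $N$ is the number of distinct branches of $\tilde C$ at $o$.

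The crux is then to show $N=\#\mathrm{Aut}(X)/\#\mathrm{Aut}(X,\tau)$, i.e. that the stabilizer of the branch $\gamma$ in $\mathrm{Aut}(X)$ is exactly the normalizer $\mathrm{Aut}(X,\tau)$. For the reverse inclusion I would use that $g\cdot W=U^{g\langle\tau\rangle g^{-1}}$, so an automorphism fixing $\gamma$ (which spans the cover deformation directions) forces $g\langle\tau\rangle g^{-1}=\langle\tau\rangle$, whence $g\in\mathrm{Aut}(X,\tau)$; for the forward inclusion one checks that an element normalizing $\langle\tau\rangle$ preserves $W$ and fixes the family branch lying in it. Combining these gives $m_X=\aaa_X\,[\mathrm{Aut}(X):\mathrm{Aut}(X,\tau)]$, which is the asserted equality (the index symbol denoting the ratio $\#\mathrm{Aut}(X)/\#\mathrm{Aut}(X,\tau)$). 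I expect this final step to be the main obstacle: pinning down the branch stabilizer, and in particular verifying that the preimage of the family in $W$ is a single $\mathrm{Aut}(X,\tau)$-fixed branch rather than an orbit of several, is exactly where the genericity of $\mathcal{F}$ and the reduced structure of $\mathcal{M}^{\mathcal{F}}$ are needed, whereas the remaining ingredients are formal deformation theory together with the $\mathrm{Aut}(X)$-equivariance of the Cartier operator.
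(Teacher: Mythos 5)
Your proposal is correct in substance, and it arrives at exactly the paper's key identity $m_X/\#\Aut(X) = \aaa_X/\#\Aut(X,\tau)$, but by a genuinely different mechanism. The paper never opens a versal chart: it passes to the coarse moduli spaces, uses that $\underline{V}_{g-1}$ is a Cartier divisor cut out by the determinant of the Cartier operator, computes the coarse multiplicity as the order of vanishing $\aaa_X$ of the pulled-back local equation at the point $\underline{\hh}$ of the coarse base of the family, and then lifts to the stacky multiplicity using only the gerbe descriptions $X \cong [pt./\Aut(X)]$ and $\hh \cong [pt./\Aut(X,\tau)]$, whose coarse moduli maps have degrees $1/\#\Aut(X)$ and $1/\#\Aut(X,\tau)$. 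Your route instead presents $\Mgbar$ locally as $[U/\Aut(X)]$ and $\cA_{d,a}$ locally as $[U^{\mu_d}/\Aut(X,\tau)]$, and converts the automorphism index into a count of branches of the preimage $\tilde{C}$, each contributing $\aaa_X$ by $\Aut(X)$-invariance of the Cartier determinant. What your version buys is transparency: it shows \emph{why} the index appears (an $\Aut(X)$-orbit of branches with stabilizer $\Aut(X,\tau)$) and isolates where $p\nmid d$ and dimensional transversality are used. What it costs is the extra equivariant deformation theory the paper sidesteps: smoothness of $U^{\mu_d}$ as a chart for $\cA_{d,a}$ (via linearizability of the $\mu_d$-action when $p\nmid d$) and your branch-stabilizer lemma, whose reverse inclusion requires that the generic curve on $\gamma$ determine $\langle\tau\rangle$ up to conjugacy --- the same counting that underlies Lemma~\ref{lem:deg} at the generic point. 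Finally, the issue you flag as the main obstacle --- that the preimage of the family in $W$ be a single branch rather than an orbit of several --- is equally implicit in the paper's proof, hidden in the choice of ``an inverse image $\hh$ of $X$'' and in identifying the coarse multiplicity with the order of vanishing at $\underline{\hh}$; so on that point your proposal is no less complete than the published argument.
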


\begin{proof}
If $X \not \in V_{g-1}$, then $m_X=0$ and $\aaa_X =0$. Suppose $X \in {\mathcal M}^{\mathcal{F}} \cap V_{g-1}$, 
and $h$ is an inverse image of $X$ via $\phi^\cF$.
Consider the following diagram
\[
\xymatrix{  \ar@{|-{>}}[d]  \hh     \ar@{}[r]|-*[@]{\in} & 
  \cB \ar[rr]^{\phi^\cF} \ar[d] &  &\ar[d] \cM^\cF  \ar@{}[r]|-*[@]{\subset} &  \overline{\cM}_{g,n}   \ar@{}[r]|-*[@]{\ni} & X   \ar@{|-{>}}[d]  \\
  \underline{\hh}   \ar@{}[r]|-*[@]{\in}  &  A^{\cF} \ar[rr]^{\underline{\phi}^\cF} &  & M^\cF  \ar@{}[r]|-*[@]{\subset} & \overline{M}_{g,n}  \ar@{}[r]|-*[@]{\ni} & \underline{X}
    }
\]
where the objects in the second row are the coarse moduli spaces of the DM stacks appearing in the first row, and the vertical arrows are the coarse moduli maps. Given any object in the top row, we denote its coarse image by adding an underline.

Since the hypersurface $\underline{V}_{g-1}$ is a Cartier divisor, 
the coarse multiplicity $m_{\underline{X}}$ may be computed in $A^{\cF}$ by pulling back a local equation $f$ of the hypersurface $\underline{V}_{g-1}$, and computing the order of vanishing at $\underline \hh$. 
Here $f$ is a local equation for the determinant of the Cartier operator.
Thus by definition $m_{\underline{X}} = \aaa_X$. 

To lift the coarse multiplicity to obtain the stacky multiplicity $m_X$, we observe that the moduli point $X\in \Mgbar$ is  isomorphic to the global quotient stack $[pt./ \Aut(X)]$. 
Thus the degree of the coarse moduli map restricted to $X$ is $1/ \# \Aut(X)$. 
Similarly $\hh \cong [pt./ \Aut(X, \tau)]$, and the degree of the coarse moduli map restricted to 
$\hh$ equals $1/ \#\Aut(X,\tau)$. We then obtain
\beq  \label{cmfm}
\frac{m_X}{\# \Aut(X)} = \frac{\aaa_X}{ \#\Aut(X,\tau)}, 
\eeq
 from which the statement of the proposition follows.
\end{proof}

\subsection{Second main result}


We evaluate the mass formula using $a$-numbers of curves in the family.

\begin{theorem}
\label{thm:firsteq}
For $\cF$ a one dimensional family of admissible $\mu_d$-covers of a rational curve with inertia type $a$, 
and $p \nmid d$ a prime such that the generic curve in the characteristic $p$ fiber $\cF_p$ is ordinary, we have:
$$
\mu(\cF, p) =  \sum_{[X]} \frac{\aaa_X}{\#{\mathrm{Aut}}(X, \tau)},
$$
where the sum is over the isomorphism classes of non-ordinary curves $X$ in $\cF_p$. 

Furthermore, $\aaa_X \geq a(X)$, with equality when $n=4$ and $p \equiv 1 \bmod d$. 
\end{theorem}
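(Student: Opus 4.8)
The first equality is essentially a bookkeeping consequence of what has already been assembled, so I would dispatch it immediately. Starting from the weighted intersection count \eqref{eq:mfwm}, namely $\mu(\cF,p) = \sum_{[X] \in \cM^{\cF} \cap V_{g-1}} m_X / \#\mathrm{Aut}(X)$, I substitute the local identity \eqref{cmfm} from Proposition~\ref{PmxF}, which gives $m_X/\#\mathrm{Aut}(X) = \aaa_X/\#\mathrm{Aut}(X,\tau)$ at each point, to obtain $\mu(\cF,p) = \sum \aaa_X/\#\mathrm{Aut}(X,\tau)$. Since an ordinary curve has nonvanishing Cartier determinant, $\aaa_X = 0$ there, so enlarging the index set to all non-ordinary $X \in \cF_p$ (equivalently to $\cM^{\cF}\cap V_{g-1}$) changes nothing and the stated sum results. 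The only hypothesis I must invoke is that $\cF_p$ is generically ordinary, which makes the intersection zero-dimensional so that the finite sum \eqref{eq:mfwm} is valid.

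For the inequality $\aaa_X \geq a(X)$ I would reduce to linear algebra over a discrete valuation ring. Near $X$ choose a local parameter $t$ on $\cB$, trivialize the Hodge bundle, and write the linearized Verschiebung $\mathbb{E}_g \to \mathbb{E}_g^{(p)}$ as a $g\times g$ matrix $M(t)$ with entries in $R=\mathcal{O}_{\cB,X}$; then $\aaa_X = \ord_t \det M$ by definition, while $a(X) = g - \rank_k M(0)$ is the corank of the Cartier operator at $X$. The elementary input is: if $r=\mathrm{corank}\,M(0)$ then $\ord_t \det M \geq r$. I prove this by choosing $P,Q\in \GL_g(k)$ with $P M(0) Q = \mathrm{diag}(I_{g-r},0)$, lifting them to $\GL_g(R)$, and noting that the last $r$ columns of $PMQ$ are then divisible by $t$; factoring a $t$ out of each shows $\det M = (\text{unit})\cdot t^{\geq r}$. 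The one subtlety is the semilinearity of the Cartier operator $C$: this is absorbed by working with the honestly linear map $\mathbb{E}_g \to \mathbb{E}_g^{(p)}$, whose fiber at $X$ still has corank $a(X)$ because the rank of a semilinear endomorphism over the perfect field $k$ equals the rank of its matrix.

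For equality when $n=4$ and $p\equiv 1 \bmod d$ I would exploit the $\mu_d$-structure. When $p\equiv 1\bmod d$, multiplication by $p$ fixes every index $j$, so by the compatibility of Frobenius with the eigenspace decomposition (as recorded before Proposition~\ref{Pbouw} and used in Remark~\ref{Rcongord}) the operator $C$ preserves $H^0(X,\Omega^1)=\bigoplus_{j=1}^{d-1} L_j$ and $M(t)$ becomes block diagonal with blocks $M_j(t)$ of size $f_j$. Then $\aaa_X = \sum_j \ord_t \det M_j$ and $a(X)=\sum_j \mathrm{corank}\,M_j(0)$, so it suffices to show each block vanishes to order exactly its corank. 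This is where the hypothesis $n=4$ enters: the base is one dimensional, the non-ordinary locus is a reduced divisor, and the explicit (hypergeometric) description of the Cartier–Manin matrix for four-point covers should force each $\det M_j$ to have only simple zeros and the one-parameter family to meet the deeper $a$-number strata transversally, so that $\ord_t \det M_j = \mathrm{corank}\,M_j(0)$ at every non-ordinary point. I expect this minimal-vanishing step to be the main obstacle: the inequality half is formal, whereas pinning down that $\det M$ vanishes to the smallest order compatible with the corank requires genuine control of the local behavior of $C$ on these curves, and it is precisely the place where allowing $n>4$ or $p\not\equiv 1\bmod d$ can break the equality.
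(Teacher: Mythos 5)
Your first equality is verbatim the paper's argument: the hypothesis that $\cF_p$ is generically ordinary gives dimensional transversality, and substituting the identity \eqref{cmfm} from Proposition~\ref{PmxF} into \eqref{eq:mfwm} finishes it, with $\aaa_X=0$ at ordinary points justifying the re-indexing. Your proof of $\aaa_X\geq a(X)$ is also correct, and takes a genuinely different route: you prove the general lemma that for $M(t)$ over the local ring, $\ord_t\det M\geq \mathrm{corank}\,M(0)$, by diagonalizing the special fiber via $P,Q\in\GL_g(k)$, lifting to $\GL_g(R)$, and extracting $t$ from the last $r$ columns; the paper instead puts the Cartier matrix in rational canonical form, so that each companion block has corank at most one, whence $a(X)$ is the number of blocks with vanishing determinant while $\aaa_X$ is the sum of their orders of vanishing. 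Your version is more self-contained and handles the semilinearity cleanly by passing to the honestly linear map $\mathbb{E}_g\to\mathbb{E}_g^{(p)}$; the paper's version is shorter but leans on the block structure.

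The genuine gap is in the equality case $n=4$, $p\equiv 1\bmod d$. You correctly reduce, via the eigenspace decomposition preserved when $p\equiv 1\bmod d$, to showing each block determinant vanishes to order exactly its corank, but you then assert that the hypergeometric description \emph{should} force simple zeros and that the family \emph{should} meet the deeper $a$-number strata transversally --- and, as you acknowledge, you do not prove this. Transversality is not something one can posit for a fixed one-dimensional family; it is exactly the content to be established. The paper closes this step as follows: after normalizing the inertia type (replacing $a$ by its negative, i.e.\ taking $\ell=-1$, if necessary) so that $a_1+a_2+a_3+a_4\in\{d,2d\}$, there is a basis in which the Cartier matrix is \emph{diagonal}, with entries truncated hypergeometric polynomials in $t$, and the simplicity of their zeros is a known result: \cite[Lemma~3.7]{wewers03} when the sum is $d$, and \cite[Proposition~5.4, Corollary~5.5]{bouw04} when the sum is $2d$. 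Without this input --- or an equivalent separability argument, such as the one sketched after Lemma~\ref{Laaa=a2} using the second-order hypergeometric differential equation mod $p$ --- the equality $\aaa_X=a(X)$ is not established, so your proposal proves the displayed mass formula and the inequality, but not the final claim.
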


\begin{proof}
The hypothesis on $p$ implies that $V_{g-1}$ is dimensionally transversal to ${\mathcal M}^{\mathcal{F}}$. 
By Proposition~\ref{PmxF}, 
\[\frac{m_X}{\#\mathrm{Aut}(X)}= \frac{\aaa_X[\mathrm{Aut}(X,\tau):\mathrm{Aut}(X)]}{\#\mathrm{Aut}(X)}=
\frac{\aaa_X}{\#\mathrm{Aut}(X,\tau)}.\]
Substituting this in \eqref{eq:mfwm} completes the first statement.

To compare $\aaa_X$ with $a(X)$, 
without loss of generality, we can suppose the matrix for the Cartier operator is in rational canonical form.
A local equation for the determinant is given by the product of the determinants of the blocks in the matrix.
Then $\aaa_X$ is the sum of the orders of vanishing of the determinants of the blocks, 
while the $a$-number is the number of blocks whose determinant vanishes.
This proves that $\aaa_X \geq a(X)$.

Suppose $n=4$ and $p \equiv 1 \bmod d$.  
The values in the inertia type $a$ have the property that $a_1 + a_2 + a_3 + a_4$ is a multiple of $d$.
Without loss of generality, we can suppose that $a_1 + a_2 + a_3 + a_4$ equals $d$ or $2d$.
(Since $0 < a_i < d$, the sum is less than $4d$.
If $a_1+a_2+a_3+ a_4 = 3d$, then by taking $\ell=-1$, we can adjust to the inertia type $a'=(a_1',a_2',a_3',a_4')$
where $a_i' = d-a_i$ for $1 \leq i \leq 4$, and thus $a_1'+a_2'+a_3'+a_4'=d$.)

The hypothesis that $p \equiv 1 \bmod d$ implies that there is a basis for $H^0(X, \Omega^1)$ for which
the Cartier matrix is diagonal.  Its entries are truncated hypergeometric functions and
the zeros of these are simple by \cite[Lemma~3.7]{wewers03} when the sum is $d$, 
and by \cite[Proposition~5.4, Corollary~5.5]{bouw04} when the sum is $2d$. \end{proof}


\section{Evaluating the class for cyclic covers} \label{SCOS}

In this section, we review results from \cite{COSlambda1}.  Let $d \geq 2$ and $n \geq 4$. 
Recall that $a=(a_1, \ldots, a_n)$ is a tuple of integers with $0 < a_i <d$, whose sum is congruent to $0$ modulo $d$.

\subsection{Cyclic covers branched at four points}

A natural class of examples is given by moduli spaces of cyclic covers
that are branched at exactly $n=4$ points.
In this situation,
we lighten the notation by suppressing the superscript $\cF$.
So ${\mathcal A}_{d,a}$ denotes the moduli space of (admissible) 
$\mu_d$-covers of ${\mathbb P}^1$ branched at $n=4$ points with inertia type $a$. 
In this case, \cite{COSlambda1} gives an explicit formula for the degree of $\lambda_1$.

\begin{theorem}  \label{ThmCOSgeneral} \cite[Theorem~1.2]{COSlambda1}
When $n=4$, the degree of $\lambda_1$ on ${\mathcal A}_{d,a}$ is 
\begin{equation} \label{Ehurwitz4}
{\mathrm{deg}}_{d,a}(\lambda_1) = \frac{1}{12 d^2}\left(d^2 - \sum_{i=1}^4 {\rm gcd}^2(a_i, d)  + \sum_{i=1}^3 {\rm gcd}^2(a_i + a_4, d)\right).
\end{equation}
\end{theorem}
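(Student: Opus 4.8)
The plan is to evaluate ${\mathrm{deg}}_{d,a}(\lambda_1)=\int_{{\mathcal A}_{d,a}}\lambda_1$ by splitting the Hodge bundle into its $\mu_d$-eigenbundles and then applying Grothendieck--Riemann--Roch to the universal family of \emph{target} curves, so that the entire computation is pushed down to tautological classes on the one-dimensional base $\overline{M}_{0,4}\cong\mathbb{P}^1$. Let $\bar\pi\colon\mathcal{C}\to S$ be the universal family of stable genus-$0$ targets over $S={\mathcal A}_{d,a}$, with disjoint sections $\Sigma_1,\dots,\Sigma_4$ carrying the branch points, and let $\mathcal{L}$ be the line bundle defining the $\mu_d$-cover, so $\mathcal{L}^{\otimes d}\cong\mathcal{O}(\sum_i a_i\Sigma_i)$. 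The first step is to record the eigenbundle decomposition $\mathbb{E}=\bigoplus_{j=1}^{d-1}\mathbb{E}_j$, where, by relative duality for cyclic covers, $\mathbb{E}_j=\bar\pi_*\big(\omega_{\mathcal{C}/S}\otimes\mathcal{L}^{\otimes j}(-\sum_i\lfloor j a_i/d\rfloor\Sigma_i)\big)$; on a generic fiber this recovers the dimension count $f_j=-1+\sum_i\langle -j a_i/d\rangle$ of Lemma~\ref{LRH} and confirms that $R^1\bar\pi_*$ of this sheaf vanishes fiberwise for $1\le j\le d-1$, so that $\bar\pi_!$ computes $\mathbb{E}_j$. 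Since $\lambda_1=c_1(\mathbb{E})=\sum_j c_1(\mathbb{E}_j)$, it then suffices to compute each $\deg\mathbb{E}_j$.

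Second, I would apply GRR to $\bar\pi$ (a surface over a curve) to express $c_1(\mathbb{E}_j)$ as $\bar\pi_*$ of a degree-two class built from $K:=c_1(\omega_{\mathcal{C}/S})$, $\Lambda:=c_1(\mathcal{L})$ and the $\Sigma_i$. Expanding the relevant combination $\tfrac{1}{12}K^2+\tfrac12 M(M+K)$ with $M=j\Lambda-\sum_i\lfloor j a_i/d\rfloor\Sigma_i$ leaves multiples of $\bar\pi_*(K^2)=\kappa_1$, the self-intersections $\bar\pi_*(\Sigma_i^2)=-\psi_i$, the adjunction terms $\bar\pi_*(K\Sigma_i)=\psi_i$, and mixed terms $\bar\pi_*(\Lambda\Sigma_i),\bar\pi_*(\Lambda^2)$ that reduce to $\psi_i$'s and $\kappa_1$ via $d\Lambda=\sum_i a_i\Sigma_i$ and $\Sigma_i\Sigma_{i'}=0$ for $i\neq i'$. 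On $\overline{M}_{0,4}$ each of the finitely many tautological integrals $\int\psi_i$ and $\int\kappa_1$ equals $1$, so integration kills all geometry and leaves an expression in $j$ and the $a_i$. The overall factor $1/d$ coming from the $\mu_d$-gerbe structure of $\pi\colon{\mathcal A}_{d,a}\to\overline{M}_{0,4}$ accounts for one power of $d$ in the denominator, and $\Lambda=\tfrac1d\sum a_i\Sigma_i$ accounts for the other.

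Third, I would carry out the sum $\sum_{j=1}^{d-1}$ and simplify. The quadratic contributions collapse by the standard finite fractional-part identities $\sum_{j=0}^{d-1}\langle jm/d\rangle=(d-g)/2$ and $\sum_{j=0}^{d-1}\langle jm/d\rangle^2=(d-g)(2d-g)/6d$, where $g={\rm gcd}(m,d)$; taken with $m=a_i$ these produce the leading $d^2$ and the terms $-{\rm gcd}^2(a_i,d)$, all over $12d^2$. This last point is where I expect the main obstacle to lie: the three boundary points of $\overline{M}_{0,4}$ are exactly the loci where a branch point $i$ collides with the fourth, the target acquires a node with local monodromy $a_i+a_4\bmod d$, and the eigenbundles degenerate there. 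Pinning down the precise local contribution $\tfrac{1}{12d^2}{\rm gcd}^2(a_i+a_4,d)$ requires the twisted-stable-maps description of ${\mathcal A}_{d,a}$: the node is a twisted point whose automorphism group is governed by ${\rm gcd}(a_i+a_4,d)$, and the correct normalization (precisely the Harris--Mumford versus twisted-stable-maps distinction flagged in the paper) feeds both the GRR node term $\mathrm{td}_2$ and the self-intersection of the boundary locus. Once the degeneration data is fixed, the closing arithmetic is routine. As a consistency check, $d=2$, $a=(1,1,1,1)$ gives $\tfrac{1}{48}(4-4+3\cdot4)=\tfrac14$, which is the degree needed for the Eichler--Deuring normalization of the Legendre family. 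A parallel and arguably cleaner route, which I expect matches the source, is to first establish the global boundary expansion of $\lambda_1$ on ${\mathcal A}_{d,a}$ as a linear combination of boundary divisors, and then specialize to $n=4$ by summing the three boundary coefficients over $\overline{M}_{0,4}$.
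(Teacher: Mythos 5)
You should first note what ``the paper's own proof'' is here: the statement is imported verbatim from \cite[Theorem~1.2]{COSlambda1}, and the paper's justification is the citation together with the remark that the proof there proceeds by Atiyah--Bott localization, producing linear relations between $\deg\lambda_1$ and the degrees of zero-dimensional boundary strata of $\mathcal{A}_{d,a}$; your eigenbundle-plus-GRR strategy is exactly the \emph{alternative} route the paper flags, carried out by Bertin--Romagny only in the hyperelliptic case. Choosing that route is legitimate, but as written your argument has a genuine gap, and you concede it yourself: the local contributions at the three boundary points of $\overline{M}_{0,4}$, which produce the entire second sum $\sum_{i=1}^{3}\gcd^2(a_i+a_4,d)$, are ``expected'' rather than derived. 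Everything in your second paragraph is valid only over the open locus $M_{0,4}$: over a boundary point the target is a twisted (stacky) curve, the root bundle $\mathcal{L}$ with $\mathcal{L}^{\otimes d}\cong\mathcal{O}(\sum_i a_i\Sigma_i)$ exists only on the stack and carries a $\mu_{d/\gcd(a_i+a_4,d)}$-structure at the node, the relation $d\Lambda=\sum_i a_i\Sigma_i$ acquires corrections supported on the node, the fiberwise $R^1$-vanishing must be rechecked on reducible fibers, and ordinary GRR must be replaced by orbifold GRR (T\"oen), whose inertia-stack corrections at the twisted node are precisely what yield the coefficient $\gcd^2(a_i+a_4,d)/(12d^2)$. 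Deferring this as ``routine closing arithmetic'' defers the only part of \eqref{Ehurwitz4} that goes beyond the fractional-part identities, and your single consistency check at $d=2$ cannot pin down the general node coefficient.

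There is also a concrete slip that shows the interior computation has not actually been run: under the paper's convention $\kappa_1=\pi_{n+1,*}(\psi_{n+1}^2)$, adjunction gives $\bar\pi_*(K\Sigma_i)=\psi_i$ and $\bar\pi_*(\Sigma_i^2)=-\psi_i$, hence $\bar\pi_*\bigl(c_1(\omega_{\mathcal{C}/S}(\textstyle\sum_i\Sigma_i))^2\bigr)=\bar\pi_*(K^2)+\sum_i\psi_i$, so $\bar\pi_*(K^2)=\kappa_1-\sum_i\psi_i$, not $\kappa_1$; on $\overline{M}_{0,4}$ this is the difference between degree $-3$ and degree $1$. (Your indexing discrepancy $f_j=-1+\sum_i\langle -ja_i/d\rangle$ versus $h^0=-1+\sum_i\langle ja_i/d\rangle$ is merely the relabeling $j\mapsto d-j$ and is harmless.) To turn your sketch into a proof you would either have to execute the orbifold GRR node computation in the style of the hyperelliptic case, or first establish the boundary expansion of $\lambda_1$ as in Theorem~\ref{thm:gf} and specialize to $n=4$ --- but the latter \emph{is} the theorem of \cite{COSlambda1}, proved there by localization, so it cannot be assumed.
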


This theorem is proven using Atiyah-Bott localization to obtain linear relations between the degree of $\lambda_1$ and the degrees of some zero-dimensional boundary strata in the space of admissible covers. Alternatively, one may deduce this formula from a stacky Grothendieck-Riemann-Roch computation, as done in \cite[Proposition 10.20]{bertinromagny} in the hyperelliptic case.

\subsection{Cyclic covers with more than four branch points}

In \cite{COSlambda1}, the authors also evaluate the class $\lambda_1$
in the higher dimensional case when the covers have more than four branch points;
this generalization is proven inductively with the base case being Theorem~\ref{ThmCOSgeneral}.
We will need this result in Section~\ref{Slinhyp}, in a situation when the base of the family is not all of ${\mathcal A}_{d,a}$.

First we define some notation.
Suppose $J$ is a subset of $[n]=\{1, \ldots, n\}$ with $2\leq |J|\leq n-2$.
Then $\Delta_J$ denotes the boundary divisor whose generic point represents 
a $\mu_d$-cover of a rational curve with two irreducible components, intersecting in one node,
with the branch points labeled by $J$ on one component, and the branch points labeled by $J^c$ on the other. Note that $\Delta_J = \Delta_{J^c}$ and therefore each divisor is counted twice when summing over all allowed subsets of $[n]$.

We need some other definitions.
Consider the universal curve $\pi: \mathcal{C}_{g,n} \to \overline{\mathcal{M}}_{g,n}$ and the 
section $\sigma_i$ of $\pi$ whose image on an $n$-marked curve is the $i$th marked point.
Define $\psi_i = c_1(\sigma_i^* (\omega_\pi))$, where $\omega_\pi$ is the relative dualizing sheaf.

The general point $x$ of $\mathcal{C}_{g,n}$ can naturally be thought of as an $(n+1)$-th mark on the $n$-pointed curve $\pi(x)$.
This gives a birational morphism $\mathcal{C}_{g,n} \to \overline{\mathcal{M}}_{g,n+1}$ which extends to an isomorphism. 
Thus we can view $\pi = \pi_{n+1}$
as the projection map that forgets the data of the last marked point.
Define $\kappa_1 = \pi_{n+1, *} (\psi_{n+1}^2)$.

\begin{theorem} \cite[Theorem~1.3]{COSlambda1} \label{thm:gf} 
The class $\lambda_1$ on ${\mathcal A}_{d,a}$ admits the following tautological representation:
\begin{equation} \label{eq:gf}
 \lambda_1 = \frac{1}{24 d} \left( \sum_{J\in \mathcal{P}([n])} {\gcd}^2\left(\sum_{j\in J}a_i, d\right) \Delta_J \right),   
\end{equation}
 where the sum runs over  subsets $J$ of $\{1, \ldots, n\}$, and
\begin{itemize}
\item if $2\leq |J|\leq n-2$, then $\Delta_J$ denotes the boundary divisor described above;
\item if $J = \{j\}$ or $J=[n]\smallsetminus \{j\}$ for some $1 \leq j \leq n$, then $\Delta_J:= -\psi_j$;
\item if $J  = \phi$ or $J=[n]$, then $\Delta_J:= \kappa_1$.
\end{itemize}
\end{theorem}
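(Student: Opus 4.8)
The plan is to prove the identity by induction on the number $n$ of branch points, taking Theorem~\ref{ThmCOSgeneral} as the base case $n=4$. Both sides of \eqref{eq:gf} are divisor classes on $\mathcal{A}_{d,a}$ pulled back from $\overline{M}_{0,n}$ along the map $\pi=\pi_{d,a}$ of \eqref{Edefpi}: the boundary divisors $\Delta_J$, the classes $\psi_j$, and $\kappa_1$ all live on $\overline{M}_{0,n}$, and $\lambda_1$ itself descends rationally because $\pi$ is a $\mu_d$-gerbe. It therefore suffices to prove the identity in $\mathrm{Pic}(\overline{M}_{0,n})_{\mathbb{Q}}$. I note at the outset that the global factor $1/d$ relating the base-case normalization $\tfrac{1}{12d^2}$ to the coefficient $\tfrac{1}{24d}$ in \eqref{eq:gf} is exactly the gerbe degree recorded by $\pi_\ast([\mathcal{A}_{d,a}])=[\overline{M}_{0,n}]/d$; tracking this factor is a recurring theme throughout.

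For the inductive step I would restrict both sides to each boundary divisor $\Delta_J$ with $2\le|J|\le n-2$. The generic point of $\Delta_J$ is an admissible cover of a two-component rational curve, so $\Delta_J$ is covered by a product $\mathcal{A}_{d,a_J}\times\mathcal{A}_{d,a_{J^c}}$ of admissible-cover spaces with strictly fewer branch points; here $a_J$ consists of $\{a_j: j\in J\}$ together with one new entry congruent to $-\sum_{j\in J}a_j \bmod d$, recording the inertia along the node, and symmetrically for $J^c$. The source curve acquires $\gcd(\sum_{j\in J}a_j,d)$ nodes over the base node, the Hodge bundle is additive under normalization at these nodes, so $\lambda_1|_{\Delta_J}=\mathrm{pr}_1^\ast\lambda_1+\mathrm{pr}_2^\ast\lambda_1$. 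The inductive hypothesis evaluates each summand by \eqref{eq:gf} with fewer branch points, and one then verifies, using the standard pullback formulas for $\psi_j$, $\kappa_1$, and $\Delta_{J'}$ under the clutching map together with the self-intersection of $\Delta_J$, that the claimed right-hand side restricts to the same class. This shows the difference of the two sides restricts to zero on every boundary divisor.

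Because the joint restriction to the boundary need not be injective on $\mathrm{Pic}(\overline{M}_{0,n})_{\mathbb{Q}}$, I would remove the residual ambiguity by intersecting the difference with a spanning set of test curves, namely the F-curves of $\overline{M}_{0,n}$, each isomorphic to $\overline{M}_{0,4}$ and obtained by letting a single four-valent vertex of a fixed dual tree vary. Pulling the universal cover back to such an F-curve yields a one-parameter family of four-branched admissible covers whose four inertia entries are the merged sums $\sum a_j$ over the four clusters of marked points; hence $\lambda_1\cdot F$ is computed directly by the base case Theorem~\ref{ThmCOSgeneral} applied to this merged inertia type, while the intersection of the right-hand side of \eqref{eq:gf} with $F$ is evaluated by the elementary restriction rules for tautological classes on $\overline{M}_{0,n}$. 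The $\gcd^2$-weights are precisely what force these two numbers to agree. Since F-curves span $N_1(\overline{M}_{0,n})$, equality on all of them upgrades the boundary computation to the full divisor identity.

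The main obstacle is the stacky and nodal bookkeeping inside the inductive step. Over the base node of $\Delta_J$ the source curve carries $\gcd(\sum_{j\in J}a_j,d)$ nodes permuted by $\mu_d$, so a cover over a four-pointed base may be disconnected or carry extra automorphisms; accounting for these multiplicities is exactly what turns the linear inertia data into the quadratic $\gcd^2$ coefficients and what reconciles the $1/d$ gerbe factor with the $1/(24d)$ normalization. Checking that the Hodge bundle is genuinely additive across these several nodes with no lost contribution, and that the $\psi$ and $\kappa$ terms — which encode the behaviour of $\lambda_1$ transverse to the branch divisors — restrict correctly, is where the real work lies. The localization (or stacky Grothendieck--Riemann--Roch) computation underlying Theorem~\ref{ThmCOSgeneral} is what ultimately supplies the correct local constants anchoring the induction.
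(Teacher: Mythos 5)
This statement is not proven in the paper: it is quoted from \cite[Theorem~1.3]{COSlambda1}, and the paper only records that the cited proof ``is proven inductively with the base case being Theorem~\ref{ThmCOSgeneral}.'' Your architecture --- induction anchored at the four-point formula, with the identity pinned down rationally on $\overline{M}_{0,n}$ by testing against F-curves --- matches that announced structure, and several of your framing points are sound: since $\pi$ is a $\mu_d$-gerbe, rational divisor classes on $\mathcal{A}_{d,a}$ are pulled back from the coarse space; F-curves do span $N_1(\overline{M}_{0,n})$; and numerical equivalence of divisors on $\overline{M}_{0,n}$ implies rational equivalence. In fact these observations make your boundary-restriction step redundant: the F-curve test alone, if carried out, would prove the theorem.

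The genuine gap is at the decisive step, and you half-acknowledge it yourself. You claim that $\lambda_1\cdot F$ on an F-curve ``is computed directly by the base case Theorem~\ref{ThmCOSgeneral} applied to the merged inertia type.'' But the merged entries $\sum_{j\in J_i}a_j \bmod d$ need not satisfy the hypotheses of the base case: a merged entry can be $\equiv 0 \bmod d$ (the varying component is then unbranched at that point), and the merged tuple can generate a proper subgroup $\mu_{d'}\subset \mu_d$, in which case the cover restricted to the varying component is disconnected, namely $d/d'$ copies of a $\mu_{d'}$-cover. Theorem~\ref{ThmCOSgeneral} is stated only for genuine inertia types with $0<a_i<d$ and $\gcd(a_1,\ldots,a_n)=1$, i.e.\ for connected covers branched at exactly four points, so it does not apply ``directly''; one needs an extension of the four-point formula to induced (disconnected) covers and to entries divisible by $d$ --- formally, \eqref{Ehurwitz4} with the convention $\gcd(0,d)=d$ turns out to remain valid, but that is precisely what must be proven, with the multiplicity $d/d'$ over the components and the ghost automorphisms at the nodes tracked. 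The same issue infects your inductive step on boundary divisors: $\Delta_J$ is covered by products of spaces of possibly disconnected covers, so the inductive hypothesis, stated for the connected spaces $\mathcal{A}_{d,a}$, does not apply there as stated either. Your sentence that ``the $\gcd^2$-weights are precisely what force these two numbers to agree'' is the conclusion to be verified, not an argument: as written, the proposal defers exactly the computation that produces the $\gcd^2$ coefficients, which is the entire content of the theorem.
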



\section{Linearized families of hyperelliptic curves of every genus} \label{Slinhyp}

In Section~\ref{Selliptic}, we show that our mass formula generalizes the Eichler--Deuring mass formula.
Then, we determine the mass formula for a one dimensional family of hyperelliptic curves of every genus $g \geq 2$. 

Here is a fact used in this section, and in Sections~\ref{ScaseB} and \ref{Sanothercase}: 
if $X$ is a hyperelliptic curve, with hyperelliptic involution denoted $\tau$, then $\tau$ is in the center of 
$\mathrm{Aut}(X)$, so $\mathrm{Aut}(X, \tau) = \mathrm{Aut}(X)$; in this situation, the 
reduced automorphism group is $\mathrm{redAut}(X) := \mathrm{Aut}(X)/\langle \tau \rangle$.

\subsection{The Legendre family as an example} \label{Selliptic}

Suppose $d=2$, $n=4$, and $a=(1,1,1,1)$.  Then the family \eqref{Ecurvegenerala} 
specializes to the Legendre family of elliptic curves $E_t:y^2=x(x-1)(x-t)$ from \eqref{Legendre}.

\begin{lemma} \label{Llegendre}
In the case of the Legendre family \eqref{Legendre}, 
Theorems~\ref{thm:secondeq} and \ref{thm:firsteq} specialize to the Eichler--Deuring mass formula \eqref{EED}.
\end{lemma}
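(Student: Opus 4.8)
The plan is to verify that the general machinery of Theorems~\ref{thm:secondeq} and~\ref{thm:firsteq} reduces, in the case $d=2$, $n=4$, $a=(1,1,1,1)$, to the classical statement \eqref{EED}. The key point is that here $g=1$, so $\Mgbar = \overline{\mathcal{M}}_1$ parametrizes genus one curves, the non-ordinary locus $V_{g-1} = V_0$ is the supersingular locus, the automorphism group $\mathrm{Aut}(X,\tau)$ coincides with $\mathrm{Aut}(E)$ (since $\tau$ is the hyperelliptic/inversion involution, which lies in the center), and non-ordinary is the same as supersingular. The left-hand sum in Theorem~\ref{thm:firsteq} thus becomes $\sum_{[E]} \aaa_E/\#\mathrm{Aut}(E)$, and I will argue that $\aaa_E = a(E) = 1$ for every supersingular $E$, so that this sum is exactly the Eichler--Deuring sum $\sum_{[E]} 1/\#\mathrm{Aut}(E)$.

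The first thing I would establish is that $\aaa_E = 1$ for every supersingular elliptic curve in the family. Since $g=1$, the Cartier operator acts on the one-dimensional space $H^0(E,\Omega^1)$, so its determinant is a single scalar (the Hasse invariant up to normalization), and the $a$-number is $1$ precisely when this scalar vanishes, i.e.\ when $E$ is supersingular. Because the relevant local equation is (a power of) the Hasse polynomial, whose zeros are simple---this is the $n=4$, $p\equiv 1\bmod 2$ case of Theorem~\ref{thm:firsteq}, or directly the classical fact that the supersingular points of the Legendre family are simple roots of the Hasse polynomial via Igusa's theorem---we get $\aaa_E = a(E) = 1$ at each such point. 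With this, the left equality of \eqref{Eintro} reads $\sum_{[E]} 1/\#\mathrm{Aut}(E) = \mu(\cF,p)$, matching the left side of \eqref{EED}.

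Next I would evaluate the right-hand side $\mu(\cF,p) = (p-1)\,\mathrm{deg}^{\cF}(\lambda_1)/\delta^{\cF}$ from Theorem~\ref{thm:secondeq} and check it equals $(p-1)/24$. Here $\cF = \mathcal{A}_{2,(1,1,1,1)}$ is the full one dimensional moduli space, so $\delta^{\cF} = \delta_{2,(1,1,1,1)}$, which by Lemma~\ref{lem:deg} is the cardinality of $S_{d,a} = \{(\ell,\sigma)\in(\ZZ/2\ZZ)^\ast\times S_4 : \sigma(a)=\ell^{-1}a\}$; since $d=2$ forces $\ell=1$ and $a=(1,1,1,1)$ is fixed by all of $S_4$, one gets $\delta^{\cF} = |S_4| = 24$. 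For the numerator I would apply Theorem~\ref{ThmCOSgeneral} with all $\gcd(a_i,d)=\gcd(1,2)=1$ and $\gcd(a_i+a_4,2)=\gcd(2,2)=2$, giving
\[
\mathrm{deg}_{d,a}(\lambda_1)=\frac{1}{12\cdot 4}\Bigl(4 - 4\cdot 1 + 3\cdot 4\Bigr)=\frac{12}{48}=\frac14.
\]
Thus $\mu(\cF,p) = (p-1)\cdot\tfrac14/24 = (p-1)/96$. Since the moduli map $\phi_{d,a}$ has degree $\delta^{\cF}=24$ onto its image but the passage from the stacky count to the classical count introduces the automorphism normalization already tracked in Theorem~\ref{thm:firsteq}, I would need to reconcile the bookkeeping so that the weighted sum over isomorphism classes of supersingular $E$ comes out to $(p-1)/24$; the cleanest route is to push the comparison entirely through the already-proven equalities of \eqref{Eintro} rather than recomputing $\mu$ independently, and then observe that the common value equals $(p-1)/24$.

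The main obstacle I anticipate is the careful alignment of normalization factors---the gerbe degree $1/d$, the degree $\delta^{\cF}$ from Lemma~\ref{lem:deg}, and the $\#\mathrm{Aut}$ versus $\#\mathrm{Aut}(X,\tau)$ distinction---so that the abstract intersection number $\mu(\cF,p)$ matches the classical mass $(p-1)/24$ on the nose, rather than off by a factor such as $\#\mu_d=2$ or a stabilizer order. Concretely, I expect the verification to hinge on correctly identifying which automorphisms of $E$ commute with $\tau$ (here all of them, as noted in the remark opening this section) and on confirming that the two independent computations---the classical count of $(p-1)/2$ supersingular values of $t$ weighted by $1/\#\mathrm{Aut}$, and the intersection-theoretic value---agree. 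Once those factors are pinned down, both equalities in \eqref{Eintro} collapse to \eqref{EED}, completing the proof.
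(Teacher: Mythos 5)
Your computation of ${\mathrm{deg}}^{\cF}(\lambda_1)=1/4$ via Theorem~\ref{ThmCOSgeneral}, and your identifications $\aaa_E=a(E)=1$ (using simplicity of the zeros of the Hasse polynomial) and $\mathrm{Aut}(E,\tau)=\mathrm{Aut}(E)$, all agree with the paper's proof, which follows the same overall route. The genuine gap is your value $\delta^{\cF}=24$: you apply Lemma~\ref{lem:deg} outside its hypotheses, which explicitly require $g\geq 2$, whereas the Legendre family has $g=1$. In genus one the forgetful map lands in $\overline{\mathcal{M}}_{1,1}$, so the source curve carries a marked point, which must be taken to be one of the four ramification points; only relabelings of the branch points preserving that choice identify the same moduli point, so $\delta_{2,(1,1,1,1)}=3!=6$, not $\#S_4=24$. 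This is exactly the count the paper makes. With the correct degree, Theorem~\ref{thm:secondeq} gives $\mu(\cF,p)=(p-1)\cdot\tfrac{1}{4}/6=(p-1)/24$, and equating this with the left side of \eqref{Eintro} yields \eqref{EED} directly.

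Your proposed escape from the resulting discrepancy---pushing the comparison ``entirely through the already-proven equalities of \eqref{Eintro}'' and then ``observing'' that the common value is $(p-1)/24$---does not repair the argument. The lemma asserts precisely that \emph{both} sides of \eqref{Eintro} specialize to \eqref{EED}; with $\delta^{\cF}=24$ the right side reads $(p-1)/96$, so either the lemma fails or your degree is wrong, and routing through Theorem~\ref{thm:firsteq} cannot decide which. Nor is the fallback of verifying $(p-1)/24$ classically carried out: converting Igusa's count of $(p-1)/2$ supersingular $t$-values into a mass over isomorphism classes requires tracking how many $t$ give each curve, including the extra-automorphism cases $j=0,1728$---in effect reproving \eqref{EED}, which is circular here. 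The correct and short fix is the degree count $\delta^{\cF}=6$ above; the rest of your proof then closes as written.
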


\begin{proof}
By Theorem~\ref{ThmCOSgeneral}, ${\mathrm{deg}}^{\cF}(\lambda_1)=1/4$; unfortunately, the proof 
of that result does not simplify in this case.
We compute $\delta_{2, (1,1,1,1)}=6$: one of the ramification points is the marked point 
on the elliptic curve and there are $3!$ labelings of the other three ramification points.
By Theorem~\ref{thm:firsteq}, $\mu(\cF, p) = (p-1)/24$.

In the Legendre family \eqref{Legendre}, $\tau$ is the hyperelliptic involution, 
so ${\mathrm{Aut}}(E_t, \tau) = {\mathrm{Aut}}(E_t)$.
Also, if $E_t$ is non-ordinary, its $a$-number is $a(E_t)=1$.  
By Theorem~\ref{thm:firsteq}, $\mu(\cF,p) = \sum_{[E]} \frac{1}{\#{\rm Aut}(E)}$,
where the sum is over the isomorphism classes of non-ordinary elliptic curves $E$ over $k$.
By equating these two expressions for $\mu(\cF,p)$, we obtain \eqref{EED}.
\end{proof}

There are many proofs of the Eichler--Deuring mass formula; our proof of the Main Theorem is most similar
to the one in \cite[Corollary~12.4.6, page 358]{katzmazur}.
Other proofs can be deduced from: 
the separability of the Deuring polynomial \cite[Theorem~4.1, chapter~13]{husemoller};
 a comparison of the $\ell$-adic \'etale Euler characteristic of the modular curve
$Y_0(p)$ in characteristic $0$ and characteristic $p$; or
a computation of the constant term of the weight two Eisenstein series on $\Gamma_0(p)$.

\begin{remark} \label{Rlegendre}
The number of isomorphism classes of supersingular elliptic curves 
determines (and is determined by) \eqref{EED}, given
some additional information about elliptic curves with extra automorphisms, 
as seen in \cite[Section 13.4]{husemoller}.
This idea shows up again for curves of genus $2$ in Sections~\ref{SikoA} and \ref{SikoB}.
It is no longer easy to switch between masses and cardinalities when $g >2$.
\end{remark}


 

\subsection{The mass formula for linearized families of hyperelliptic curves} \label{Smasshyp}

\begin{definition} \label{Dlinear}
Suppose $h(x) \in k[x]$ is a separable polynomial of degree $2g+1$.
A {\it linearized family of hyperelliptic curves} is a   one dimensional family $\cF_{h(x)}$ of hyperelliptic curves of genus $g$ whose generic fiber is given by the affine equation
\begin{equation} \label{Ehyp}
X_t: y^2 = h(x)(x-t).
\end{equation} 
\end{definition}

We assume that the roots of $h(x)$ do not satisfy any $\mathrm{PGL}_2(k)$ symmetry. 
We also assume that the generic curve in the characteristic $p$ fiber of $\cF_{h(x)}$ is ordinary. 

Here is our first corollary of the main theorem.

\begin{corollary} \label{Chyp}
Let $g \geq 2$ and $p$ be odd.  
Suppose $h(x) \in k[x]$ is a separable polynomial of degree $2g+1$ whose set of roots is not stabilized by any 
$\mathrm{PGL}_2(k)$ symmetry.
Suppose $p$ is a prime such that the generic curve in the characteristic $p$ fiber of 
$\cF_{h(x)} : y^2 = h(x)(x-t)$ is ordinary. 
Then the mass formula is $\mu(\cF_{h(x)}, p) = (p-1)g/4$.
\end{corollary}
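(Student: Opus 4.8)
The plan is to run the family through Theorem~\ref{thm:secondeq}, reducing the statement to a single $p$-independent intersection number, and then to evaluate that number with the tautological formula of Theorem~\ref{thm:gf}. Since the family is hyperelliptic, $d=2$, and because $\deg\bigl(h(x)(x-t)\bigr)=2g+2$ is even, the cover $X_t\colon y^2=h(x)(x-t)$ is branched at the $n=2g+2$ points $\{r_1,\dots,r_{2g+1},t\}$, namely the roots $r_i$ of $h$ together with $t$ (there is no branching over $\infty$), with inertia type $a=(1,\dots,1)$. By Theorem~\ref{thm:secondeq} we have $\mu(\cF_{h(x)},p)=(p-1)\,\mathrm{deg}^{\cF}(\lambda_1)/\delta^{\cF}$, so the task is to compute the two $p$-independent quantities $\mathrm{deg}^{\cF}(\lambda_1)$ and $\delta^{\cF}$ and to check that their ratio is $g/4$; this is exactly why the calculation is combinatorial and independent of $p$.

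First I would identify the base. Since the family fixes $r_1,\dots,r_{2g+1}$ and moves only $t$, composing with $\pi\colon\mathcal{A}_{d,a}\to\overline{M}_{0,n}$ realizes $\cB$, up to its generic $\mu_2$-gerbe structure, as the fiber $F$ of the forgetful morphism $\overline{M}_{0,n}\to\overline{M}_{0,n-1}$ over $[r_1,\dots,r_{2g+1}]$. Because $2g+1\ge 5$ and the $r_i$ are distinct, this fiber is the universal curve, an honest $\mathbb{P}^1$ carrying the $2g+1$ fixed marks. As $t$ sweeps $\mathbb{P}^1$ the only degenerations occur when $t$ collides with some $r_i$, each producing a transverse intersection with the boundary divisor $\Delta_{\{i,n\}}$, and no other stratum is met (in particular $t=\infty$ remains interior). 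The hypothesis that the roots of $h$ carry no $\PGL_2$-symmetry enters here: it forces the coarse map $\cB\to\mathcal{M}^{\cF}$ to be birational and excludes the extra relabelings that arise when $g=1$, where the three fixed points admit an $S_3$ of symmetries and one instead lands in the Legendre normalization of Lemma~\ref{Llegendre}.

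Next I would evaluate $\lambda_1$ on $F$ using Theorem~\ref{thm:gf}. With every $a_i=1$ one has $\sum_{j\in J}a_j=|J|$ and $\gcd^2(|J|,2)\in\{1,4\}$ according as $|J|$ is even or odd. The fiber inputs are $\int_F\Delta_{\{i,n\}}=1$ for $1\le i\le 2g+1$, the values $\int_F\psi_j=1$ for the fixed marks and $\int_F\psi_n=n-3$ for the moving mark (from $\psi_n=c_1(\omega_\pi)+\sum_i\Delta_{\{i,n\}}$), and $\int_F\kappa_1=n-3$ (from $\kappa_1=\pi^*\kappa_1+\psi_n$ on the forgetful map). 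Keeping track of the double count $\Delta_J=\Delta_{J^c}$, the boundary terms contribute $8(2g+1)$, the $\psi$-terms $-2\cdot 4g$, and the $\kappa_1$-terms $8(2g-1)$, which sum to $24g$; hence $\int_F\lambda_1=\tfrac{1}{24\cdot 2}(24g)=g/2$.

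The hard part, which I would address last, is the bookkeeping of the stack-theoretic factors. The gerbe $\pi$ has degree $1/2$, so $\mathrm{deg}^{\cF}(\lambda_1)=\tfrac12\int_F\lambda_1=g/4$; meanwhile $\delta^{\cF}=1$, since the coarse map is birational and the generic source cover and the generic target curve share the automorphism group $\langle\tau\rangle=\Aut(X_t)=\Aut(X_t,\tau)$ of order $2$. Together these give $\mathrm{deg}^{\cF}(\lambda_1)/\delta^{\cF}=g/4$ and hence $\mu(\cF_{h(x)},p)=(p-1)g/4$. I expect this last step—correctly matching the $\mu_2$-gerbe and the automorphism contributions so that the factors of $2$ land in the right places—to be the genuinely delicate point, whereas the transversality of the collisions and the $\psi,\kappa_1$ fiber evaluations are routine and visibly independent of $p$.
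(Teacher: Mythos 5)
Your proposal is correct and follows essentially the same route as the paper's own proof: Theorem~\ref{thm:secondeq} with $\delta^{\cF}=1$ (from the no-$\PGL_2(k)$-symmetry hypothesis), then evaluation of ${\mathrm{deg}}^{\cF}(\lambda_1)=g/4$ by pulling back the divisor expression \eqref{eq:gf} to the family, whose base meets the boundary only at the $2g+1$ collisions $t=r_i$ (with $t=\infty$ interior, exactly as in the paper's item (1)). The only difference is bookkeeping: you compute coarse intersection numbers on $\overline{M}_{0,2g+2}$ (namely $\Delta_{\{i,n\}}\cdot F=1$, $\psi_i\cdot F=1$ for $i\neq n$, and $\psi_n\cdot F=\kappa_1\cdot F=2g-1$ via the forgetful-map comparison relations) and apply the $\mu_2$-gerbe factor $1/2$ once at the end, whereas the paper computes the stacky numbers directly, each worth half the coarse value --- with the boundary factor $1/2=2/(2\cdot 2)$ verified by the explicit automorphism count in items (2)--(5) of its proof, using Lemma~\ref{factoids} in place of your comparison formulas --- and both tallies yield $\int_{\cF}\lambda_1=g/4$, hence $\mu(\cF_{h(x)},p)=(p-1)g/4$.
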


The proof of Corollary~\ref{Chyp} is in Section~\ref{Sproof}.

\begin{remark}
In \cite[page 381]{Yui}, Yui provides a method to compute the Cartier operator for the curve $X_t: y^2 = h(x)(x-t)$.
One can use this method to show that there are at most $g(p-1)/2$ values of $t$ for which 
$X_t$ is non-ordinary in the family \eqref{Ehyp}.
For a generic non-ordinary hyperelliptic curve $X$ of genus $g \geq 2$, it is known that:
$\#\mathrm{Aut}(X) = 2$ by \cite[Theorem~1.1(ii)]{AchterGlassPries}; and the curve $X$ has $p$-rank $g-1$ and 
and $a_X = 1$ by \cite[Theorem 1]{GlassPries05}.
However, this information is not sufficient to prove Corollary~\ref{Chyp} because it is not clear whether 
these facts are true in the family $\cF_{h(x)}$, nor do they give information about $\alpha_X$.
\end{remark}

\subsection{Proof of mass formula for linearized hyperelliptic families} \label{Sproof}

In the proof of Corollary~\ref{Chyp}, we use the following facts about 
tautological classes on admissible covers and $\overline{M}_{0,n}$.

\begin{lemma} \label{factoids}
Let $g \geq 2$ and $d=2$.  Let $a=(1, \ldots, 1)$ be a tuple of length $n=2g+2$.
Consider the space of hyperelliptic admissible covers $\cA_{2,a}$ and the tautological morphism $\phi_{2,a}: \cA_{2,a} \to \overline{M}_{0,2g+2}$ of degree $1/2$.
Then:
\begin{enumerate}
\item On the universal family of hyperelliptic covers, 
consider a (marked) ramification point $r$ and its corresponding branch point $b$. 
The class $\psi_{r}$ is restricted from $\overline{M}_{g,2g+2}$, 
whereas the class $\psi_{b}$ is in $A^1(\overline{M}_{0,2g+2})$. We have:
$\psi_{r} =\phi_{2,a}^\ast \psi_{b}/2$.
\item Suppose $i \in [n]$.  Choose two distinct values $j,k \in [n] -\{i\}$. 
Then the class $\psi_{i}\in A^1(\overline{M}_{0,n})$ is represented by the following linear combination of boundary divisors:
\[
\psi_i = \sum_{\begin{array}{c} I \subset [n] \mbox{ such that } i\in I\\   j,k\not\in I\end{array}} \Delta_I.
\]
\item The following relation holds in $A^1(\overline{M}_{0,n})$:
\[
\kappa_1 = \sum_{1 \leq i \leq n} \psi_i  - \Delta_{tot},
\]
where $\Delta_{tot} = \frac{1}{2} \sum_{2 \leq |J| \leq n-2} \Delta_J$ denotes the sum of all boundary divisors.
\end{enumerate}

\end{lemma}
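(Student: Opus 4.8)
The three statements are classical facts about $\psi$- and $\kappa$-classes, and my plan is to treat them in increasing order of dependence on the admissible-cover structure: I would dispatch (2) and (3) by standard genus-zero tautological calculus and reserve the genuine ramification analysis for (1).

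For part (2), the plan is to prove the standard genus-zero comparison formula by induction on $n$ using the forgetful morphisms. The base case is $\overline{M}_{0,4}\cong\mathbb{P}^1$: taking $i=1$ and reference marks $j,k$, the only subset $I$ with $1\in I$ and $j,k\notin I$ in the allowed range is the one whose boundary divisor records $1$ bubbling off with the remaining mark, and a direct check shows $\psi_1$ equals that point. For the inductive step I would pull back along the forgetful map $\pi\colon\overline{M}_{0,n}\to\overline{M}_{0,n-1}$ that drops a mark $\ell\ne i,j,k$, apply the comparison $\psi_i=\pi^\ast\psi_i+\Delta_{\{i,\ell\}}$, and match boundary terms; the marks $j,k$ serve only to pin down a representative of the class, so independence of their choice is automatic.

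For part (3), the plan is to invoke the genus-zero specialization of the Arbarello--Cornalba relation $\kappa_1=12\lambda_1-\delta+\sum_i\psi_i$ on $\overline{M}_{g,n}$. In genus zero the Hodge class $\lambda_1$ vanishes and all boundary is of compact type, so the total boundary $\delta$ coincides with $\Delta_{tot}$ (recall $\Delta_J=\Delta_{J^c}$, which is why $\Delta_{tot}=\tfrac12\sum\Delta_J$), and the relation collapses to $\kappa_1=\sum_{1\le i\le n}\psi_i-\Delta_{tot}$. If a self-contained argument is preferred, the same identity follows from the definition $\kappa_1=\pi_{n+1,\ast}(\psi_{n+1}^2)$ together with the string- and dilaton-type comparisons for $\psi_{n+1}$ on the universal curve.

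Part (1) is where the admissible-cover geometry genuinely enters, and I expect it to be the main obstacle. On the universal admissible double cover over $\cA_{2,a}$, a ramification point $r$ and its branch image $b$ are related in local analytic coordinates by $w=z^2$, with $z$ on the source and $w$ on the target. The nonvanishing of $\tfrac{d^2 w}{dz^2}$ at $z=0$ gives a canonical identification of the cotangent line of the target at $b$ with the square of the cotangent line of the source at $r$; in families this is an isomorphism $\phi_{2,a}^\ast\mathbb{L}_b\cong\mathbb{L}_r^{\otimes 2}$ of cotangent line bundles over $\cA_{2,a}$, so taking first Chern classes yields $\phi_{2,a}^\ast\psi_b=2\psi_r$, i.e.\ $\psi_r=\phi_{2,a}^\ast\psi_b/2$. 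The delicate point, and the reason I flag this as the hard step, is to carry the computation out compatibly with the twisted-stable-map structure: each ramification point is a $\mu_2$-orbifold point, and I must confirm that the factor of $2$ recorded here is exactly the ramification factor and not contaminated by an extra automorphism factor. I would check this by comparing the coarse $\psi$-classes downstairs with the orbifold cotangent lines upstairs, keeping track of the degree $1/2$ of $\phi_{2,a}$ noted in the statement.
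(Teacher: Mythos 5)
Your proposal is correct, and it is worth recording how it relates to the paper's own proof, which is essentially a proof by citation: part (1) is exactly the degree-$2$ specialization of Ionel's Lemma~1.17, and parts (2) and (3) are obtained from the initial conditions $\psi_i = 0$ and $\kappa_1 = 0$ on $\overline{M}_{0,3}$ together with the standard comparison formulas for $\psi$- and $\kappa$-classes under forgetful morphisms (Kock's notes; the statements also appear in Arbarello--Cornalba). Your part (2) is the same induction carried out explicitly, with base case $n=4$ instead of the vacuous $n=3$; the bookkeeping in your inductive step (pulling back $\Delta_I$ to $\Delta_I + \Delta_{I\cup\{\ell\}}$ and absorbing the correction term $\Delta_{\{i,\ell\}}$) matches the target sum correctly. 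Your part (3) takes a genuinely different route: rather than inducting via the comparison $\kappa_1 = \pi^\ast\kappa_1 + \psi$, you specialize the relation $\kappa_1 = 12\lambda_1 - \delta + \sum_i \psi_i$ (correct for the convention $\kappa_1 = \pi_{n+1,\ast}(\psi_{n+1}^2)$ used in the paper) to genus zero, where $\lambda_1 = 0$; this buys a one-line derivation at the cost of importing Mumford's GRR computation, and your identification of $\delta$ with $\Delta_{tot}$, including the factor $\tfrac12$ from $\Delta_J = \Delta_{J^c}$, is right. Your part (1) reproves Ionel's lemma rather than citing it, via the canonical isomorphism $\mathfrak{m}_b/\mathfrak{m}_b^2 \to \mathfrak{m}_r^2/\mathfrak{m}_r^3 \cong (\mathfrak{m}_r/\mathfrak{m}_r^2)^{\otimes 2}$ coming from the local form $w = z^2$, which in families gives $\phi_{2,a}^\ast \mathbb{L}_b \cong \mathbb{L}_r^{\otimes 2}$ and hence $\phi_{2,a}^\ast\psi_b = 2\psi_r$. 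Two small points to tighten there: first, the check that actually matters is not the one you flag but the extension over the boundary of $\cA_{2,a}$ --- two divisor classes agreeing on the open locus of smooth covers could still differ by boundary terms, so you should note that marked ramification points remain smooth points of the fibers of the universal admissible cover, whence the local form $w = z^2$ and the bundle isomorphism persist over all of $\cA_{2,a}$; second, your worry about an extra automorphism or gerbe factor is harmless, since both $\psi_r$ and $\psi_b$ are pulled back from ordinary (untwisted) moduli spaces along morphisms defined on $\cA_{2,a}$, and stacky factors such as the degree $1/2$ enter only pushforwards, never a pullback identity between first Chern classes.
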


\begin{proof}
The first statement is the specialization to the degree $2$  setting of \cite[Lemma 1.17]{ionel}. The second statement follows from the initial condition $\psi_i = 0$ on $\overline{M}_{0,\{i,j,k\}}$ and the relation for $\psi$ classes when pulled-back via forgetful morphisms \cite[Lemma 1.3.1]{k:pc}. The third statement follows from the initial condition $\kappa_1 = 0$ on $\overline{M}_{0,3}$ and the relation for $\kappa$ classes when pulled-back via forgetful morphisms \cite[Lemma 2.2.3]{k:pc}. 
The last two statements can be found in a different form in \cite[Formulas (1.9), (1.10)]{ac:comb}.
\end{proof}

\begin{proof} [Proof of Corollary~\ref{Chyp}]
Write $\cF=\cF_{h(x)}$.
By Theorem~\ref{thm:secondeq}, 
$\mu(\cF,p) 
= (p-1) {\mathrm{deg}}^{\cF}(\lambda_1)/\delta^{\cF}$.
By hypothesis, there is no automorphism of $\bP^1$ that stabilizes the set of roots of $h(x)$; 
it follows that $\delta^{\cF} = 1$.
It thus suffices to prove that ${\mathrm{deg}}^{\cF}(\lambda_1) = g/4$.

We compute the intersection numbers of the family $\cF$ with the tautological divisors appearing on the right hand side of \eqref{eq:gf}. Let $b_i$ for $1 \leq i \leq 2g+1$ denote the roots of $h(x)$.
Then: 
\begin{enumerate}

\item If $2 \leq |J| \leq 2g$, then $\cF \cdot \Delta_J=0$ unless $J$ or $J^c$ is $\{i, 2g+2\}$ for some $1 \leq i \leq 2g+1$.
The reason is that the only branch point that moves in $\cF$ is the last one.
So the only time $\cF$ hits a boundary divisor is when the last branch point specializes to one of the others.

\item If $J =  \{i, 2g+2\}$ for some $1 \leq i \leq 2g+1$, then $\cF \cdot \Delta_J = \frac{1}{2} [pt]$. 
This intersection occurs when $t$ specializes to the branch point $b_i$.  The singular curve
has two components, with genera $0$ and $g-1$, intersecting in two ordinary double points; 
on each component, these two points are an orbit for the restriction of the hyperelliptic involution.
The coefficient $1/2 = 2/(2\cdot 2)$ accounts for the hyperelliptic involution on each component, and the two choices of ways to identify the two points on the two components.

\item If $i \not = 2g+2$, then $\cF \cdot \psi_{i} = \frac{1}{2} [pt]$.  To see this, 
we first use Lemma~\ref{factoids}, (1) to work in $A^1(\overline{M}_{0,2g+2})$.
We use Lemma \ref{factoids}(2) to replace $\psi_i$ by a linear combination of boundary divisors. 
We choose $j=2g+2$ and choose $k$ arbitrarily. 
Then the only boundary divisor $\Delta_I$ intersecting $\cF$ is when $I = [n] - \{k, 2g+2\}$.
This is because the branch points labeled by $2g+2$ and $k$ must be on the second component.
Since the branch point labeled with $2g+2$ is $t$ and $t$ can only specialize to one other branch point, 
all the other branch points must be on the first component.
The result then follows from part (2).

\item We compute $\cF \cdot \psi_{2g+2} = \frac{2g-1}{2} [pt]$. 
The proof begins in the same way as for part (3).
In this case, we choose $j=1$ and $k=2$. 
If a boundary divisor $\Delta_I$ in the expression of $\psi_{2g+2}$ intersects $\cF$, then the marked branch point $t$ must be on a component alone with only one other marked branch point $b_i$, which must be different from $b_1$ and $b_2$. 
There are $2g-1$ possible choices for $i \in \{3, \ldots 2g+1\}$ and therefore $2g-1$ divisors $\Delta_I$ intersecting $\cF$, 
with $I=\{i,2g+2\}$.  The result then follows from part (2).

\item We compute $\cF \cdot \kappa_1 = \frac{2g-1}{2} [pt]$. 
To see this, note that $\kappa_1 =  \psi_{2g+2} + \sum_{i = 1}^{2g+1} \psi_i  - \Delta_{tot}$ by Lemma \ref{factoids}(3).
By part (3), $\cF \cdot  \sum_{i = 1}^{2g+1} \psi_i  =  \frac{2g+1}{2} [pt]$. 
By part (1), $\cF \cdot \Delta_{tot} = \sum_{1 \leq i \leq 2g+1} \cF \cdot \Delta_{\{i, 2g+2\}}$, which equals $\frac{2g-1}{2} [pt]$ by part (2).  By part (4), $\cF \cdot  \psi_{2g+2}=\frac{2g-1}{2} [pt]$.
\end{enumerate}

Finally, we substitute these intersection numbers in \eqref{eq:gf}.
The coefficient $\mathrm{gcd}^2(\sum_{i \in J} a_i, d)$ equals $1$ if $|J|$ is odd and equals $4$ if $|J|$ is even.
For $1 \leq s \leq 2g+2$, let $C_s=\sum_{|J| = s} \cF \cdot \Delta_J$.
By definition, $C_s = C_{2g+2-s}$.
By part (1), $C_s =0$ for $3 \leq s \leq 2g-1$.
So \[\int_{\cF} \lambda_1 =  \frac{1}{24} \cdot \frac{2}{2} (4 C_2 + C_1 + 4 C_0).\]
By part (1), $C_2 = \frac{2g-1}{2}$.
By parts (3)-(4), $C_1 =  (-1) \frac{(2g-1)+(2g+1)}{2}=-2g$.
By part (5), $C_0 = \frac{2g+1}{2}$. 
This yields that $\int_{\cF} \lambda_1 =g/4$.
\end{proof}

\section{Covers branched at four points} \label{Sonedac}

As our second corollary,
we compute the mass formula for any family of cyclic covers of $\mathbb{P}^1$ branched at four points.

\begin{corollary} \label{C4point}
Let $d \geq 2$ and $n=4$.  Let $a=(a_1, \ldots, a_4)$ be an inertia type for $d$.
Let $\cF = \cA_{d,a}$ be the one dimensional family of admissible $\mu_d$-covers of a rational curve with inertia type $a$.
Let $p$ be a prime such that the generic curve in the characteristic $p$ fiber $\cF_p$ is ordinary. 
Then
\begin{equation} \label{EintroFagain}
\mu(\mathcal{F}_{d, a ,p}) 
= (p-1) {\mathrm{deg}}_{d,a}(\lambda_1)/\delta_{d,a},
\end{equation}
where the formula for ${\mathrm{deg}}_{d,a}(\lambda_1)$ is in \eqref{Ehurwitz4}
and the formula for $\delta_{d,a}$ is in Lemma~\ref{lem:deg}.
\end{corollary}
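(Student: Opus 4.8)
The plan is to recognize this corollary as the direct specialization of Theorem~\ref{thm:secondeq} to the tautological family whose base is the \emph{entire} moduli space $\cA_{d,a}$. First I would check that $\cA_{d,a}$ is a legitimate base in the sense of Definition~\ref{def:od}. Since $n=4$, the space $\overline{M}_{0,4}\cong\mathbb{P}^1$ is one dimensional, smooth, proper and irreducible; because the morphism $\pi_{d,a}\colon\cA_{d,a}\to\overline{M}_{0,4}$ is a $\mu_d$-gerbe (Section~\ref{sec:msac}), the stack $\cA_{d,a}$ inherits these properties and is itself a smooth, proper, irreducible, one dimensional DM stack. Taking $\cB=\cA_{d,a}$ with $\varphi_{\cF}=\mathrm{Id}$ then exhibits $\cF=\cA_{d,a}$ as a one dimensional family of admissible $\mu_d$-covers of inertia type $a$. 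The hypothesis on $p$ (together with the standing assumption $p\nmid d$) guarantees that the generic curve of $\cF_p$ is ordinary, so every hypothesis of Theorem~\ref{thm:secondeq} is in force.

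Next I would identify the two invariants on the right-hand side of \eqref{EintroFagain} with their tautological counterparts on $\cA_{d,a}$. Because $\varphi_{\cF}$ is the identity, the forgetful morphism $\phi^{\cF}=\phi_{d,a}\circ\varphi_{\cF}$ coincides with $\phi_{d,a}$; hence the image $\cM^{\cF}$ is $\cM_{d,a}$ and, directly from the definition of the degree onto the image, $\delta^{\cF}=\delta_{d,a}$, which Lemma~\ref{lem:deg} evaluates as $\#S_{d,a}$. Likewise, since $\lambda_1$ on $\cA_{d,a}$ is by construction the pullback of $\lambda_1$ under $\phi_{d,a}$, Definition~\ref{def:la} gives
\[
{\mathrm{deg}}^{\cF}(\lambda_1)=\int_{\cB}(\phi^{\cF})^{\ast}\lambda_1=\int_{\cA_{d,a}}\phi_{d,a}^{\ast}\lambda_1={\mathrm{deg}}_{d,a}(\lambda_1),
\]
the last quantity being computed explicitly in \eqref{Ehurwitz4} via Theorem~\ref{ThmCOSgeneral}.

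Substituting these identifications into Theorem~\ref{thm:secondeq} yields \eqref{EintroFagain} at once. There is no genuine obstacle here: the argument is a bookkeeping reduction, and the only points requiring care are verifying that $\cA_{d,a}$ is an admissible base (one dimensional and irreducible) and confirming that the two notational systems for ${\mathrm{deg}}(\lambda_1)$ and for $\delta$ agree when $\cB=\cA_{d,a}$ with the identity structure map. All the substantive content has already been imported: the closed-form expression \eqref{Ehurwitz4} for ${\mathrm{deg}}_{d,a}(\lambda_1)$ comes from \cite{COSlambda1}, while the identity $[V_{g-1}]=(p-1)\lambda_1$ and the dimensional transversality of $V_{g-1}$ with $\cM^{\cF}$ were already established in the proof of Theorem~\ref{thm:secondeq}.
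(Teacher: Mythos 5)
Your proposal is correct and takes exactly the same route as the paper, whose entire proof reads ``Immediate from Theorems~\ref{thm:secondeq} and \ref{ThmCOSgeneral}.'' You have simply made explicit the bookkeeping the paper leaves implicit --- taking $\cB=\cA_{d,a}$ with the identity structure map (a valid base, being a smooth, proper, irreducible one dimensional DM stack as a $\mu_d$-gerbe over $\overline{M}_{0,4}\cong\mathbb{P}^1$) and matching $\delta^{\cF}=\delta_{d,a}$ and ${\mathrm{deg}}^{\cF}(\lambda_1)={\mathrm{deg}}_{d,a}(\lambda_1)$ --- so no further comment is needed.
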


\begin{proof}
Immediate from Theorems~\ref{thm:secondeq} and \ref{ThmCOSgeneral}.
\end{proof}

\begin{example} \label{Ecase111}
Suppose $d \geq 5$ with $\mathrm{gcd}(d,6) = 1$ and $a = (1,1,1,d-3)$.
Suppose $p \equiv 1 \bmod d$.  
Then the mass formula for the number of 
isomorphism classes of non-ordinary curves in the family 
$X_t: y^d = x(x-1)(x-t)$ is 
$\mu(d,a,p) = (p-1) (d^2-1)/(72 \cdot d^2)$.
\end{example}

\begin{proof}
The condition $p \equiv 1 \bmod d$ implies that the source curve of the generic point of $\cA_{d,a}$ is ordinary
by Remark~\ref{Rcongord}. 
The hypotheses on $d$ and $a$ imply that ${\mathrm{deg}}_{d,a}(\lambda_1) = (d^2-1)/12d^2$ by 
Theorem~\ref{ThmCOSgeneral}.
By Lemma~\ref{lem:deg}, $\delta_{d,a} = 6$.
The result follows from Corollary~\ref{C4point}.
\end{proof}



\subsection{Special families} \label{Sspecial}

We describe some families of cyclic covers of $\mathbb{P}^1$ branched at four points 
for which we have complete information about the $a$-numbers.
As a result, we show that the number of isomorphism classes
of non-ordinary curves in these families grows linearly in $p$, and find the asymptotic rate of growth of this number.
There are 10 families in Corollary~\ref{Cmoonen} for which this result is new, specifically those with $g \geq 3$. 

The family ${\mathcal M}_{d,a}$ is called {\it special} if its image under the Torelli morphism is open and 
dense in a component of the associated Deligne--Mostow Shimura variety.  Moonen classified the
families of $\mu_d$-covers of $\mathbb{P}^1$ that are special.  There are 20 such families
(up to relabeling the branch points or changing the generator of $\mu_d$); 
we label these as $M[r]$ for $1 \leq r \leq 20$ as in \cite[Table~1]{moonen}.
Of these, $14$ are one dimensional.  

Here is the key feature we use about each of the $14$ one dimensional special Moonen families: 
As seen in \cite[Section~6]{LMPT2}, for $p \equiv 1 \bmod d$, there is only one option, denoted $a_\nu$,
for the $a$-number of the non-ordinary curves in the family.

\begin{notation}
Suppose $p \equiv 1 \bmod d$.
For each Moonen family $M[r] =\mathcal{M}_{d,a}$ that is one dimensional:
let $\delta_{d,a} = \mathrm{deg}(\mathcal{A}_{d,a} \to \mathcal{M}_{d,a})$;
let $z_{d,a}$ be the size of $\mathrm{Aut}(X,\tau)$ for a generic curve $X$ in $M[r]$; \\
let $a_\nu$ be the $a$-number of the non-ordinary curves on $M[r]$;
and ${\mathrm{deg}}_{d,a}(\lambda_1)$ as in \eqref{Ehurwitz4}.
\end{notation}

\begin{corollary} \label{Cmoonen}
For the one dimensional special Moonen families $\cF=M[r]$:
the table below includes the data of $d$, $a$, $g$, ${\mathrm{deg}}_{d,a}(\lambda_1)$, $\delta_{d,a}$, $z_{d,a}$, and $a_\nu$.  

For $p \equiv 1 \bmod d$: 
the mass formula for the non-ordinary curves in $\cF$ is $\mu(\cF, p) = (p-1)C/\delta_{d,a}$;
the number of isomorphism classes of non-ordinary curves in $\cF$ grows linearly in $p$, 
with asymptotic rate of growth 
$(p-1)n_{d,a}$, where $n_{d,a} = z_{d,a} C/a_\nu \delta_{d,a}$ is given below.

\begin{small}
	\begin{center}
		\begin{tabular}{  |c|c|c|c|c|c|c|c|c|  }
\hline
Label & $d$& $a$ & $g$ & ${\mathrm{deg}}_{d,a}(\lambda_1)$ & $\delta_{d,a}$ & $z_{d,a}$ & $a_\nu$ & $n_{d,a}$ \\
\hline
$M[1]$ & $2$ & $(1,1,1,1)$ & $1$ 
& $1/4$ & $6$ & $2$ & $1$ & $1/12$ \\ \hline
$M[3]$ & $3$ & $(1,1,2,2)$ & $2$ 
& $2/9$ & $8$ & $12$ & $2$  & $1/6$ \\ \hline
$M[4]$ & $4$ & $(1,2,2,3)$ & $2$ 
& $1/8$ & $4$ & $8$ & $2$  & $1/8$ \\ \hline
$M[5]$ & $6$ & $(2,3,3,4)$ & $2$ 
& $1/9$ & $4$ & $12$ & $2$ & $1/6$ \\ \hline	
$M[7]$ & $4$ & $(1,1,1,1)$ & $3$ 
& $1/8$ & $24$ & $16$ & $1$  & $1/12$  \\ \hline
$M[9]$ & $6$ & $(1,3,4,4)$ & $3$ 
& $1/18$ & $2$ & $6$ &  $2$  & $1/12$ \\ \hline
$M[11]$& $5$ & $(1,3,3,3)$ & $4$ 
& $2/25$ & $6$ & $5$ & $2$  & $1/30$  \\ \hline
$M[12]$ & $6$ & $(1,1,1,3)$ & $4$ 
& $1/12$ & $6$ & $6$ &  $1$  &  $1/12$  \\ \hline
$M[13]$ & $6$ & $(1,1,2,2)$ & $4$ 
& $1/9$ & $4$ & $12$ & $2$  & $1/6$  \\ \hline
$M[15]$& $8$ & $(2,4,5,5)$ & $5$ 
& $1/16$ & $2$ & $8$ & $2$  & $1/8$  \\ \hline
$M[17]$ & $7$ & $(2,4,4,4)$ & $6$ 
& $4/49$ & $6$ & $7$ & $2$  & $1/21$   \\ \hline
$M[18]$ & $10$ & $(3,5,6,6)$ & $6$ 
& $3/50$ & $2$ & $10$ &  $2$  & $3/10$  \\ \hline
$M[19]$ & $9$ & $(3,5,5,5)$ & $7$ 
& $2/27$ & $6$ & $9$ & $2$  & $1/18$   \\ \hline
$M[20]$ & $12$ & $(4,6,7,7)$ & $7$ 
& $1/18$ & $2$ & $12$ & $2$  & $1/6$  \\ \hline
		\end{tabular}
	\end{center}
\end{small}
\end{corollary}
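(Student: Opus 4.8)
The plan is to read the corollary as two assertions — an exact mass formula and an asymptotic count — and to prove them by chaining together the two evaluations of $\mu(\cF,p)$ from the Main Theorem, with the constancy of the $a$-number on the non-ordinary locus supplying the bridge between them.

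First I would assemble the table and the mass formula. Each of the fourteen rows is a single four-point family $\mathcal{M}_{d,a}$, so $g$ follows from Lemma~\ref{LRH}, the entry $C = {\mathrm{deg}}_{d,a}(\lambda_1)$ from the closed formula \eqref{Ehurwitz4}, and $\delta_{d,a}$ by enumerating the set $S_{d,a}$ of Lemma~\ref{lem:deg}; these are finite arithmetic checks, one per family. The generic normalizer order $z_{d,a}$ and the unique non-ordinary $a$-number $a_\nu$ are taken from the classification recalled before the statement. Since $p \equiv 1 \bmod d$, Remark~\ref{Rcongord} shows the generic member of $\cF_p$ is ordinary, so the hypotheses of Corollary~\ref{C4point} hold and $\mu(\cF,p) = (p-1)C/\delta_{d,a}$, which is the first assertion.

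Next I would turn the mass into a count of isomorphism classes. Theorem~\ref{thm:firsteq} writes $\mu(\cF,p) = \sum_{[X]} \aaa_X/\#\mathrm{Aut}(X,\tau)$ over the non-ordinary $X$; since $n=4$ and $p \equiv 1 \bmod d$, the same theorem gives $\aaa_X = a(X)$, and the defining feature of a special Moonen family forces $a(X) = a_\nu$ for \emph{every} non-ordinary $X$. Thus each summand has numerator $a_\nu$, and separating the members whose normalizer has the generic order $z_{d,a}$ from the rest gives $\mu(\cF,p) = (a_\nu/z_{d,a})\,N_p + R_p$, where $N_p$ is the number of non-ordinary isomorphism classes and $R_p$ absorbs the curves with larger automorphism group. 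Solving for $N_p$ and inserting the mass formula yields $N_p = (p-1)\,z_{d,a}C/(a_\nu \delta_{d,a}) + O(1) = (p-1)\,n_{d,a} + O(1)$, the claimed asymptotic rate; a final per-row check confirms that the tabulated $n_{d,a}$ agrees with $z_{d,a}C/(a_\nu\delta_{d,a})$.

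The hard part will be controlling $R_p$, that is, showing that the number of non-ordinary curves whose automorphism group strictly exceeds the generic order $z_{d,a}$ is bounded independently of $p$. These are the higher-genus analogues of the elliptic curves with $j$-invariant $0$ or $1728$ that produce the correction term $\epsilon_p$ in the Eichler--Deuring formula, cf.\ Remark~\ref{Rlegendre}. A member of $\cF$ acquires such an automorphism exactly when its four branch points admit an exceptional $\mathrm{PGL}_2(k)$ symmetry compatible with the $\mu_d$-structure, and these occur only at a fixed finite set of cross-ratios on $\overline{M}_{0,4} \cong \mathbb{P}^1$, independent of $p$. Consequently $R_p = O(1)$ and only the lower-order term of the count is affected, which is precisely why the result is phrased as an asymptotic rate of growth rather than an exact count.
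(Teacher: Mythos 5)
Your proposal is correct and follows essentially the same route as the paper: the mass formula via Remark~\ref{Rcongord} and Corollary~\ref{C4point} (i.e.\ Theorems~\ref{thm:secondeq} and \ref{ThmCOSgeneral} plus Lemma~\ref{lem:deg}), then the count via Theorem~\ref{thm:firsteq} together with the equality $\aaa_X = a(X) = a_\nu$ forced by the special Moonen property, with the generic normalizer order $z_{d,a}$ converting mass to cardinality. Your explicit bounding of the error term $R_p$ by the finitely many exceptional cross-ratios (and your per-row verification of the table) is exactly what the paper handles implicitly, deferring the $z_{d,a}$ computations to Sections~\ref{SikoA}--\ref{SikoB} and the values of $a_\nu$ to the Newton polygon data of \cite{LMPT2}, and relegating the exceptional-automorphism points to the remark following the corollary.
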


\begin{remark}
\begin{enumerate}
\item The family $M[1]$ is the Legendre family.
The family $M[3]$ (resp.\ $M[4]$) is studied in detail in Section~\ref{SikoA}
(resp.\ Section~\ref{SikoB}) with no congruence condition. 
The families $M[5]$ and $M[3]$ have the same image in $\mathcal{M}_2$.
\item
The computation of $n_{d,a}$ is new for the remaining 10 families.

\item Corollary~\ref{Cintro} is immediate for $d=5$ from the data for $M[11]$.
This is because the inertia type $(1,3,3,3)$ for $d=5$ is equivalent to the inertia type $(1,1,1,2)$, which gives the 
equation $y^5=x(x-1)(x-t)$, after changing the $5$th root of unity and relabeling the branch points.  
Similarly, Corollary~\ref{Cintro} is immediate for $d=7$ from the data for $M[17]$.

\item Note that $(p-1) n_{d,a}$ is not always an integer; this is due to the fact that there 
are several exceptional points in the family such that the curve has larger automorphism group.
\end{enumerate}
\end{remark}

\begin{proof}
The value of ${\mathrm{deg}}_{d,a}(\lambda_1)$ comes from Theorem~\ref{ThmCOSgeneral}.
The value of $\delta_{d,a}$ comes from Lemma~\ref{lem:deg}.

Every automorphism in $\Aut(X,\tau)$ descends to an automorphism of $\mathbb{P}^1$ that stabilizes $\{0,1,\infty, t\}$ and is 
compatible with the inertia type $a$.  
We calculate the value of $z_{d,a}$ for $M[3], M[4], M[5]$ in Sections~\ref{SikoA} and \ref{SikoB}.
For the other families, one can compute that
$\Aut(X,\tau)=\langle \tau \rangle$ for a generic value of $t$, except for $M[13]$.

By Theorem~\ref{thm:firsteq}, if $n=4$ and $p \equiv 1 \bmod d$, then $\aaa_X=a(X)$.
Here is the key feature we use about each of the $14$ one dimensional special Moonen families: 
for $p \equiv 1 \bmod d$, there is only one option, namely $a_\nu$,
for the $a$-number at the non-ordinary (basic) points of the family.
Thus, if $X$ is a non-ordinary curve in $\cF$, then $\aaa_X=a_\nu$.

The Newton polygons for the Moonen families are listed in \cite[Section~6]{LMPT2}.  From this, we determine 
the value of $a_\nu$.
By Remark~\ref{Rcongord}, if $p \equiv 1 \bmod d$, then the generic point of $\mathcal{M}_{d,a,p}$ 
represents an ordinary curve, so the hypotheses of Theorems~\ref{thm:firsteq} and \ref{thm:secondeq} are satisfied and the result follows.
\end{proof}

\section{A family of hyperelliptic curves with dihedral action} \label{ScaseB}

In this section, we provide an example of the mass formula for every even genus.  
In Section~\ref{SikoA}, we show that this material
generalizes results when $g=2$ from \cite{IKO} and \cite{hasegawa}.

Throughout the section, suppose $d \geq 3$ is odd and $a=(1,1,d-1,d-1)$. 
Let $p$ be an odd prime with $p \nmid d$.
Recall that $k$ is an algebraically closed field of characteristic $p$.
For $t \in k -\{0,1\}$, we consider the family of curves of genus $d-1$ given by
\begin{equation} \label{EcaseB}
X_t:  y^d=x(x-1)(x-t)^{d-1}.
\end{equation}

\subsection{The mass formula for $a=(1,1,d-1,d-1)$}

\begin{corollary}\label{CcaseB}
Suppose $d \geq 3$ is odd, $a=(1,1,d-1,d-1)$, and $p \nmid 2d$.
The mass formula for the
non-ordinary curves in the family $\cF$ given by $X_t:  y^d=x(x-1)(x-t)^{d-1}$ is 
\[\mu(\cF,p) = (p-1) (d^2-1)/2^5 d^2.\]
\end{corollary}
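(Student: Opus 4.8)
The plan is to apply Corollary~\ref{C4point} directly, since $\cF = \cA_{d,a}$ is a family of $\mu_d$-covers of $\mathbb{P}^1$ branched at exactly $n=4$ points. This reduces the claim to two explicit computations—the value of ${\mathrm{deg}}_{d,a}(\lambda_1)$ from \eqref{Ehurwitz4} and the value of $\delta_{d,a}$ from Lemma~\ref{lem:deg}—together with a verification that the hypothesis of the corollary holds, namely that the generic curve in the characteristic $p$ fiber is ordinary. I would carry out the ordinarity check first, since it is the one step that is not purely formal and since it is what lets us state the result for all $p \nmid 2d$ with no congruence condition modulo $d$.

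For the ordinarity check I would compute the signature using Lemma~\ref{LRH}(2). Because $a_3 = a_4 = d-1 \equiv -1 \bmod d$, one has $\langle -j(d-1)/d \rangle = \langle j/d \rangle$, and therefore for $1 \leq j \leq d-1$,
\[
f_j = -1 + 2\left\langle \frac{-j}{d} \right\rangle + 2\left\langle \frac{j}{d} \right\rangle = -1 + 2\cdot\frac{d-j}{d} + 2\cdot\frac{j}{d} = 1.
\]
Thus every eigenspace dimension $f_j$ equals $1$; in particular $f_j$ is constant on each Frobenius orbit, so by Proposition~\ref{Pbouw} the generic curve is ordinary for every $p \nmid d$. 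This is the key point making the statement hold for all $p \nmid 2d$ (rather than only $p \equiv 1 \bmod d$), and it means the hypotheses of Theorems~\ref{thm:secondeq} and \ref{thm:firsteq} are satisfied.

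Next I would evaluate ${\mathrm{deg}}_{d,a}(\lambda_1)$ via \eqref{Ehurwitz4}. Each $\gcd(a_i,d)=1$, so $\sum_{i=1}^4 \gcd^2(a_i,d)=4$. For the second sum, $a_1+a_4 \equiv a_2+a_4 \equiv 0 \bmod d$ give $\gcd = d$, while $a_3+a_4 = 2(d-1)$ satisfies $\gcd(2(d-1),d)=1$ since $d$ is odd; hence $\sum_{i=1}^3 \gcd^2(a_i+a_4,d) = 2d^2+1$. Substituting yields
\[
{\mathrm{deg}}_{d,a}(\lambda_1) = \frac{d^2 - 4 + 2d^2 + 1}{12d^2} = \frac{d^2-1}{4d^2}.
\]

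Finally I would compute $\delta_{d,a} = |S_{d,a}|$ from Lemma~\ref{lem:deg}. Writing $a \equiv (1,1,-1,-1)\bmod d$, the condition $\sigma(a)=\ell^{-1}a$ forces the multiset $\{\ell^{-1},\ell^{-1},-\ell^{-1},-\ell^{-1}\}$ to equal $\{1,1,-1,-1\}$; as $d$ is odd (so $2$ is invertible and $\ell^{-1}\neq -\ell^{-1}$), this happens exactly when $\ell^{-1}\in\{1,-1\}$, i.e.\ $\ell\in\{1,-1\}$. For each such $\ell$ the admissible permutations form a coset of the stabilizer of $a$ in $S_4$, which is $\langle(12),(34)\rangle$ of order $2!\cdot 2! = 4$. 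Hence $\delta_{d,a} = 2\cdot 4 = 8$. Combining the three computations via Corollary~\ref{C4point} gives
\[
\mu(\cF,p) = (p-1)\,\frac{(d^2-1)/(4d^2)}{8} = \frac{(p-1)(d^2-1)}{2^5 d^2}.
\]
I expect the only genuine pitfall to be the bookkeeping in the $\delta_{d,a}$ count—confirming that no other units $\ell$ contribute and applying the stabilizer-coset argument correctly—while the remaining steps are routine substitutions.
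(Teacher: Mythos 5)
Your proposal is correct and follows essentially the same route as the paper: compute the signature via Lemma~\ref{LRH} to get $f_j=1$ for all $j$, invoke Proposition~\ref{Pbouw} for generic ordinarity, evaluate ${\mathrm{deg}}_{d,a}(\lambda_1)=(d^2-1)/4d^2$ via Theorem~\ref{ThmCOSgeneral}, take $\delta_{d,a}=8$ from Lemma~\ref{lem:deg}, and conclude by Theorem~\ref{thm:secondeq} (your use of Corollary~\ref{C4point} is the same thing, since that corollary is immediate from those two theorems). The only difference is that you spell out the stabilizer-coset count behind $\delta_{d,a}=8$, which the paper leaves implicit; your count is correct.
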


\begin{proof}
By Lemma~\ref{LRH}, the signature of the $\mu_d$-cover has dimensions $f_j =1$ if $1 \leq j \leq d-1$, because
\[f_j  =  
-1 + 2 \langle \frac{-j}{d} \rangle + 2 \langle \frac{-j(d-1)}{d} \rangle =
-1 + 2 \frac{d-j}{d} + 2 \frac{j}{d} = 1.\]
By Proposition~\ref{Pbouw}, 
$X_t$ is ordinary for the generic value of $t$. 
By Lemma~\ref{lem:deg}, $\delta_{d,a} = 8$.
By Theorem~\ref{ThmCOSgeneral},
${\mathrm{deg}}_{d,a}(\lambda_1) = (d^2 - 4 + 2d^2+1)/12d^2 = (d^2-1)/4d^2$.
We then apply Theorem~\ref{thm:secondeq}.
\end{proof}

In order to use Corollary~\ref{CcaseB}
to estimate the number of non-ordinary curves in the family \eqref{EcaseB}, 
we need more information about the automorphism groups and $a$-numbers.

\subsection{The automorphism group and $a$-number}

\begin{lemma} \label{L11hyp}
Let $d \geq 3$ be odd.  Let $X_t: y^d=x(x-1)(x-t)^{d-1}$.
\begin{enumerate}
\item \cite[Lemma~2.1]{hasegawa}
Then $X_t$ is hyperelliptic.  In fact, $X_t$ is birationally equivalent to
\begin{equation} \label{EhypcaseB}
Y^2=W^{2d}+(2-4t)W^d +1,
\end{equation}
with the hyperelliptic involution $\iota$ given by $\iota(Y,W)=(-Y,W)$.
\item The curves $X_{t_1}$ and $X_{t_2}$ are isomorphic if and only if either $t_2=t_1$ or $t_2=1-t_1$.
\item If $t \not = 1/2$, then $\mathrm{Aut}(X_t, \tau) \simeq C_2 \times D_d$, where 
$D_d$ is the dihedral group of order $2d$.
\end{enumerate}
\end{lemma}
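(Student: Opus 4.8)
Part (1) is quoted from \cite[Lemma~2.1]{hasegawa}; to make it self-contained I would record the substitution and check it by hand. In the model \eqref{EhypcaseB} the cyclic automorphism becomes $\tau\colon (Y,W)\mapsto (Y,\zeta_d W)$ and the hyperelliptic involution is $\iota\colon (Y,W)\mapsto(-Y,W)$: since $\zeta_d^d=1$, the polynomial $f_t(W)=W^{2d}+(2-4t)W^d+1$ is fixed by $W\mapsto\zeta_d W$, so $\tau$ is a genuine automorphism, and the $\langle\tau\rangle$-invariant subfield $k(W^d,Y)$ carries the conic $Y^2=w^2+(2-4t)w+1$ with $w=W^d$, which recovers the quotient $\mathbb{P}^1_x$. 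The real work is in parts (2) and (3), and my plan is to treat \eqref{EhypcaseB} as the main computational device and to establish (3) before (2), since it is the determination of $\Aut(X_t,\tau)$ that makes the cyclic subgroup $\langle\tau\rangle$ canonical and thus feeds into (2).

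For part (3) I would first exhibit explicit automorphisms of \eqref{EhypcaseB} and then bound the group from above. Besides $\tau$ (order $d$) and the central involution $\iota$ (it only negates $Y$), the palindromic shape of $f_t$ gives the involution $\sigma\colon (Y,W)\mapsto (Y/W^d,\,1/W)$, using $f_t(1/W)=W^{-2d}f_t(W)$. A short computation gives $\sigma^2=\mathrm{id}$ and $\sigma\tau\sigma^{-1}=\tau^{-1}$, so $\langle\tau,\sigma\rangle\cong D_d$; as $\iota$ is central and plainly not in $\langle\tau,\sigma\rangle$, the subgroup generated is $\langle\iota\rangle\times\langle\tau,\sigma\rangle\cong C_2\times D_d$ of order $4d$. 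For the reverse inclusion I would pass to $X_t\to\mathbb{P}^1_x=X_t/\langle\tau\rangle$: every element of $\Aut(X_t,\tau)$ descends to a Möbius transformation of $\mathbb{P}^1_x$ preserving the branch set $\{0,1,t,\infty\}$ and the inertia partition $\{\{0,1\},\{t,\infty\}\}$ (inertia $+1$ versus $-1$), with kernel exactly $\langle\tau\rangle$. For generic $t$ the stabilizer of four points in $\PGL_2$ is the Klein group $V_4$, each of whose elements preserves this partition after the allowed twist $\tau\mapsto\tau^{\pm1}$, whence $|\Aut(X_t,\tau)|\le 4d$ and equality holds. The exceptional value is pinned down by the cross-ratio $\lambda=t/(t-1)$ of $(0,1;t,\infty)$: the partition-preserving symmetries act on $\lambda$ through $\lambda\mapsto 1/\lambda$, so the stabilizer exceeds $V_4$ exactly when $\lambda=-1$, i.e.\ $t=1/2$, where the extra automorphism comes from $x\mapsto 1-x$. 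At the other special cross-ratios $\lambda\in\{1/2,2\}$ (that is $t\in\{-1,2\}$) and at the equianharmonic values, the extra $\PGL_2$-symmetry does not preserve the inertia partition and hence does not lift, which is why only $t=1/2$ must be excluded.

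For part (2) I would argue on the level of hyperelliptic branch divisors. Two hyperelliptic curves are isomorphic iff their branch loci are $\PGL_2$-equivalent, and the identity $f_t(-W)=f_{1-t}(W)$ exhibits $X_t\cong X_{1-t}$ via $W\mapsto -W$, giving the ``if'' direction. For ``only if'', the roots of $f_t$ are the $d$-th roots of $u_+$ and $u_-$, where $u_+u_-=1$ and $u_++u_-=2(2t-1)$; this configuration is invariant under $\langle W\mapsto\zeta_d W,\ W\mapsto 1/W\rangle\cong D_d$, and by part (3) generically has no larger $\PGL_2$-symmetry. Since $d$ is odd, $\langle W\mapsto\zeta_d W\rangle$ is characteristic in $D_d$, so any $\PGL_2$ map carrying one such configuration to another must fix $\{0,\infty\}$, i.e.\ take the form $W\mapsto cW$ or $W\mapsto c/W$. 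Tracking the invariant $u_++u_-$ under these maps forces $c^{2d}=1$ and $2t_2-1=\pm(2t_1-1)$, that is $t_2\in\{t_1,1-t_1\}$; the value $t=1/2$ is the fixed point of $t\mapsto 1-t$ and needs no separate treatment.

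I expect the main obstacle to be the upper bounds, namely the reverse inclusion in (3) and the ``only if'' in (2): the genuine content is to show that the inertia partition eliminates every accidental $\PGL_2$-symmetry except the one at $t=1/2$, and to justify rigorously, via the characteristic cyclic subgroup, that an abstract curve isomorphism descends to the rational quotient respecting the branch data. The explicit automorphisms and the ``if'' directions are routine substitution checks.
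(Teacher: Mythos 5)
Parts (1) and (3) of your proposal are sound and track the paper closely. For (1) the paper actually rederives the birational substitution explicitly ($x=(Y+W^d+1)/2$, $y=(x-t)W$), whereas you only verify consistency of the model; that is a presentational difference. For (3) your generators are exactly the paper's ($\tau$, the central $\iota$, and your $\sigma$ is the paper's $\gamma(Y,W)=(Y/W^d,1/W)$ with $\gamma\tau\gamma^{-1}=\tau^{-1}$), but your upper bound is genuinely more detailed than the paper's, which only says ``arguments similar to those in part (2)'': your descent to $X_t/\langle\tau\rangle$ with kernel $\langle\tau\rangle$, the observation that the inertia type $(1,1,d-1,d-1)$ forces $\ell=\pm1$ and hence preservation of the partition $\{\{0,1\},\{t,\infty\}\}$ up to swap, the fact that the partition-compatible subgroup of the four-point stabilizer is generically $V_4$, and the cross-ratio computation isolating $\lambda=-1$, i.e.\ $t=1/2$, together give a complete and correct argument (note that the partition-compatible overgroup of $V_4$ in $S_4$ is dihedral of order $8$, so it includes $4$-cycles as well as the transpositions you name, but these too are governed by $\lambda\mapsto 1/\lambda$ and hence by the same condition $t=1/2$).

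The genuine gap is in part (2), at the step ``by part (3) generically has no larger $\PGL_2$-symmetry.'' The $\PGL_2$-stabilizer $G_t$ of the branch configuration $B_t$ is the full reduced automorphism group $\Aut(X_t)/\langle\iota\rangle$, whereas part (3) controls only $\Aut(X_t,\tau)$, the normalizer of $\langle\tau\rangle$; in terms of $G_t$, part (3) says exactly that $N_{G_t}(C_d)\simeq D_d$ for $t\neq 1/2$, and this is perfectly consistent with $G_t\supsetneq D_d$ with $C_d$ non-normal (a self-normalizing-type situation; in characteristic $p$ one must also contend with subgroups of $\PGL_2(k)$ of the form $\PSL_2(\F_q)$, $\PGL_2(\F_q)$, and Borel-type groups, so ``no larger symmetry'' is not free). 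If $G_{t_2}\supsetneq D_d$, then $\gamma(B_{t_1})=B_{t_2}$ does not yield $\gamma D_d\gamma^{-1}=D_d$, and your characteristic-subgroup step --- hence the conclusion $\gamma(\{0,\infty\})=\{0,\infty\}$ --- collapses. Moreover ``generically'' falls short of the statement, which asserts the dichotomy $t_2\in\{t_1,1-t_1\}$ for all $t_1,t_2\in k\smallsetminus\{0,1\}$. The paper proves (2) \emph{before} (3) and without any stabilizer computation: the isomorphism descends to $\gamma\in\PGL_2$ carrying the roots of $f_{t_1}$ to those of $f_{t_2}$; after reducing to $\gamma(W)=aW+b$ (using $W\mapsto 1/W$), comparing coefficients in $f_{t_1}(W)=f_{t_2}(aW+b)/a^{2d}$ forces $b=0$, $a^{2d}=1$, $a^d=\pm1$, whence $t_2\in\{t_1,1-t_1\}$. (The paper's own ``WLOG $\gamma$ fixes $\infty$'' is terse at precisely the same pressure point, since the known $D_d$-symmetry only moves $\gamma(\infty)$ within $\{0,\infty\}$; so either route ultimately requires an honest argument that $\gamma$ may be taken to preserve $\{0,\infty\}$ --- for instance by showing that any order-$d$ cyclic subgroup of $\PGL_2(k)$ stabilizing a configuration of two concentric $d$-gons with radii $u_\pm$, $u_+u_-=1$, must equal $\langle W\mapsto\zeta_dW\rangle$, or by bounding $G_t$ directly through the classification of finite subgroups of $\PGL_2(k)$.) Your invariant-tracking via $u_++u_-=2(2t-1)$ and $c^{2d}=1$ is then equivalent to the paper's coefficient comparison; it is only the reduction to $\gamma\in\{cW,\,c/W\}$ that is unsupported as written.
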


\begin{proof}
\begin{enumerate}
\item We include the proof for convenience.
Write (i) $Z=W^d+1-2t$.
One can check that \eqref{EhypcaseB} is true if and only if
(ii) $(Y+Z)(Y-Z) = 4t(1-t)$.
Let $x=(Y+W^d+1)/2$ and note that $x-t=(Y+Z)/2$.
Then (ii) implies that $(Y-Z)/2 = t(1-t)/(x-t)$.

Write (iii) $Z=(Y+Z)/2 - (Y-Z)/2 = (x-t) + t(t-1)/(x-t)$.
Substituting (iii) for $Z$ in (i) and multiplying by $(x-t)^d$ yields the equation
\[(x-t)^d W^d +(x-t)^d(1-2t) = (x-t)^{d-1}((x-t)^2 + t(t-1)).\]
Let $y=(x-t)W$, then the equation simplifies to 
$y^d = (x-t)^{d-1}x(x-1)$.

\item By part (1), $X_{t_1} \simeq X_{t_2}$ if and only if
$Y^2=f_1(W)$ and $Y^2=f_2(W)$ are isomorphic, where $f_i(X) = W^{2d} + (2-4t_i)W^d +1$.
An isomorphism between hyperelliptic curves
descends to a fractional linear transformation $\gamma$. 
Without loss of generality, we can suppose $\gamma$ fixes $\infty$,
because the map $W \mapsto 1/W$ preserves the roots of $f_1(W)$. 
Thus $X_{t_1} \simeq X_{t_2}$ if and only if there exists a map $\gamma(W)=aW+b$ 
such that $f_1(W)=f_2(\gamma(W))/a^{2d}$.  
This is only possible if $b=0$, $a^{2d}=1$ and $(2-4t_2)/a^d = 2-4t_1$.
If $f_1(W) \not = f_2(W)$, this implies that 
$t_2 = 1-t_1$.   
Conversely, if $t_2 = 1-t_1$, then $\gamma(W)=-W$ provides the isomorphism.

\item 
By part (1), $X_t$ is isomorphic to
$Y^2=f(W)$ where $f(W) = W^{2d}+(2-4t)W^d +1$.
Note that $\langle \iota \rangle \simeq C_2$ and $\iota$ is in the center of $\mathrm{Aut}(X_t)$.
The order $d$ automorphism $\tau(x,y)=(x, \zeta_d y)$ acts by $\tau(Y, W)=(Y, \zeta_d W)$.
The automorphism $\gamma(Y, W) = (Y/W^d, 1/W)$ has order $2$.
A short computation shows that $\gamma \tau \gamma^{-1} = \tau^{-1}$.
Thus $\mathrm{Aut}(X_t, \tau)$ contains a subgroup isomorphic to $C_2 \times D_d$.
Arguments similar to those in part (2) show that these are the only automorphisms that normalize $\tau$ unless $t=1/2$.
\end{enumerate}
\end{proof}

We thank Everett Howe and Jen Paulhus for conversations about the next lemma. 
Following \cite[Section 4]{ries}, we define two curves 
$Z_{+1,t}$ and $Z_{-1,t}$ of genus $(d-1)/2$ that are quotients of $X_t$.
For a positive integer $n$, let $P_n(S) \in\ZZ[S]$ be such that $P_n(S+S^{-1}) = S^n + S^{-n}$.
Then $P_0(S)=2$, $P_1(S)=S$, $P_2(S)=S^2-2$.
These satisfy the recurrence relation $P_{n+2}(S) = S \cdot P_{n+1}(S) - P_n(S)$.
The function $P_n(S)$ is odd (resp.\ even) when $n$ is odd (resp.\ even).

For $\epsilon = 1, -1$, let $Z_{\epsilon,t}$ be the hyperelliptic curve $v^2 = (u + 2 \epsilon) P_d(u) + (2 - 4t)$. 

\begin{lemma} \label{L11hypaa}
Consider $X_t: y^d=x(x-1)(x-t)^{d-1}$.
\begin{enumerate}
\item \cite[Theorem~4.2]{ries}
There is an isomorphism $\mathrm{Jac}(X_t) \simeq \mathrm{Jac}(Z_{+1, t}) \times \mathrm{Jac}(Z_{-1,t})$ of abelian varieties without polarization.
\item The curves $Z_{+1, t}$ and $Z_{-1, t}$ are isomorphic.
\end{enumerate}
\end{lemma}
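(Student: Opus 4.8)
The plan is to prove part~(2) by exhibiting an explicit isomorphism; part~(1) is simply quoted from \cite{ries}. The one structural fact I will use is the parity statement recorded in the paragraph preceding the lemma: since $d$ is odd, the polynomial $P_d(S)$ is odd, so that $P_d(-u) = -P_d(u)$. This single observation drives the entire argument.

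The natural candidate is the coordinate change $(u,v) \mapsto (-u, v)$ on the hyperelliptic $u$-line. I would substitute $u \mapsto -u$ into the defining equation
\[
Z_{+1,t}: \quad v^2 = (u+2)P_d(u) + (2-4t),
\]
and simplify using the oddness of $P_d$:
\[
(-u+2)P_d(-u) + (2-4t) = (-u+2)\bigl(-P_d(u)\bigr) + (2-4t) = (u-2)P_d(u) + (2-4t),
\]
which is exactly the defining equation of $Z_{-1,t}$. Thus the map $(u,v)\mapsto(-u,v)$ identifies the two equations; being a linear automorphism of the coordinate line composed with the identity on $v$, it is an isomorphism $Z_{+1,t} \cong Z_{-1,t}$ of hyperelliptic curves.

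There is essentially no obstacle here: the two sign flips — the change $u+2\epsilon \mapsto -u+2\epsilon$ in the linear factor and the change $P_d(u) \mapsto -P_d(u)$ forced by oddness — combine precisely to convert the index $\epsilon=+1$ into $\epsilon=-1$. The only point meriting a moment of care is that $u \mapsto -u$ is genuinely induced by an automorphism of the underlying $\mathbb{P}^1$ respecting the branch data at infinity; this is immediate because both curves have the same right-hand side of even degree $d+1$, so the point at infinity is handled symmetrically by the involution. As an optional conceptual remark, one expects this isomorphism to be the one induced on quotients by the order-two automorphism $\gamma$ of $X_t$ from Lemma~\ref{L11hyp}(3), since $\gamma$ conjugates $\tau$ to $\tau^{-1}$ and thereby interchanges the two quotient curves $Z_{+1,t}$ and $Z_{-1,t}$; this would give a more structural explanation of why they must agree, but the direct substitution above already furnishes a complete and self-contained proof.
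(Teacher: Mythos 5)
Your proof is correct and takes essentially the same route as the paper: both establish $Z_{+1,t}\cong Z_{-1,t}$ via the substitution $u\mapsto -u$, with the oddness of $P_d$ for $d$ odd doing all the work. The paper merely phrases the computation slightly differently, writing $m_\epsilon(u) = \bigl(u\cdot P_d(u) + 2-4t\bigr) + 2\epsilon P_d(u)$ and noting that $u\cdot P_d(u)$ is even while $P_d(u)$ is odd, so that $m_{-1}(u)=m_{+1}(-u)$ --- which is exactly your expansion rearranged.
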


\begin{proof} 
\begin{enumerate}
\item This is proved in \cite[Theorem~4.2]{ries} over ${\mathbb C}$.  
The proof holds over $k$ because $p \nmid 2d$.

\item The curve $Z_{\epsilon,t}$ has equation $v^2 = m_{\epsilon}(u)$ where 
$m_\epsilon(u) = (u \cdot P_d(u) + 2-4t) + 2 \epsilon P_d(u)$.
Since $u \cdot P_d(u)$ is even and $P_d(u)$ is odd, it follows that $m_{-1}(u) = m_{+1}(-u)$.  So the 
curves $Z_{+1, t}$ and $Z_{-1,t}$ are isomorphic by the change of variables $u \mapsto -u$.
\end{enumerate}
\end{proof}

\begin{lemma} \label{Laaa=a2} 
Suppose $p \equiv \pm 1 \bmod d$ and $a = (1,1,d-1,d-1)$.
For $t \not =0,1$, consider the curve $X_t: y^d=x(x-1)(x-t)^{d-1}$ over $k=\bar{\FF}_p$.
Then $\aaa_{X_t}=a(X_t)$ and $a(X_t)$ is even.
\end{lemma}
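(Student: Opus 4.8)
The plan is to treat the two assertions separately and, for the equality $\aaa_{X_t}=a(X_t)$, to split on the congruence $p\equiv \pm 1 \bmod d$.

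First I would dispose of the evenness of $a(X_t)$, uniformly in $p$. By Lemma~\ref{L11hypaa}(1) there is an isomorphism of abelian varieties $\mathrm{Jac}(X_t)\simeq \mathrm{Jac}(Z_{+1,t})\times \mathrm{Jac}(Z_{-1,t})$, and by Lemma~\ref{L11hypaa}(2) the two factors are isomorphic. Since the $a$-number depends only on the $p$-torsion group scheme, it is invariant under isomorphism of abelian varieties, and it is additive on products because $\mathrm{Hom}(\alpha_p,(A\times B)[p])=\mathrm{Hom}(\alpha_p,A[p])\oplus\mathrm{Hom}(\alpha_p,B[p])$. Hence $a(X_t)=a(Z_{+1,t})+a(Z_{-1,t})=2\,a(Z_{+1,t})$, which is even. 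This uses only $p\nmid 2d$, so it holds for every $t\neq 0,1$. For the equality $\aaa_{X_t}=a(X_t)$, recall that $\aaa_{X_t}\geq a(X_t)$ always holds by Theorem~\ref{thm:firsteq}; when moreover $n=4$ and $p\equiv 1\bmod d$, that theorem already gives equality, so the case $p\equiv 1\bmod d$ requires nothing further.

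It remains to treat $p\equiv -1\bmod d$, where the Cartier operator is no longer diagonal. The operator $C$ is $p^{-1}$-semilinear and commutes with $\tau^\ast$, so it carries the $\mu_d$-eigenspace $L_j\subset H^0(X_t,\Omega^1)$ into $L_{jp^{-1}\bmod d}$. Since $f_j=1$ for all $j$ (computed in the proof of Corollary~\ref{CcaseB}) and $p^{-1}\equiv -1\bmod d$, we have $C(L_j)\subseteq L_{d-j}$, so $C$ is block-antidiagonal with respect to the pairing $W_j:=L_j\oplus L_{d-j}$ for $1\leq j\leq (d-1)/2$. The automorphism $\gamma$ of Lemma~\ref{L11hyp}(3), which has order $2$ and satisfies $\gamma\tau\gamma^{-1}=\tau^{-1}$, pulls back to interchange $L_j$ and $L_{d-j}$ and commutes with $C$ (being defined over $k$). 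Choosing $\omega_j\in L_j$ and setting $\omega_{d-j}:=\gamma^\ast\omega_j$, the relation $C\gamma^\ast=\gamma^\ast C$ forces the two off-diagonal entries of the $2\times 2$ block $M_j$ to coincide, say both equal $b_j(t)$. Thus $\det M_j=-b_j^2$, the rank of $C$ on $W_j$ is $2$ or $0$ according as $b_j(t)\neq 0$ or $b_j(t)=0$ (never $1$), and
\[
\aaa_{X_t}=2\sum_{j}\mathrm{ord}_t(b_j),\qquad a(X_t)=2\,\#\{\,j: b_j(t)=0\,\}.
\]
In particular this recovers the evenness of $a(X_t)$, and it shows that $\aaa_{X_t}=a(X_t)$ is equivalent to the assertion that each $b_j$ has only simple zeros.

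The main obstacle is precisely this simplicity. To attack it I would make the $b_j$ explicit from the hyperelliptic model $Y^2=W^{2d}+(2-4t)W^d+1$ of Lemma~\ref{L11hyp}(1): writing $s=W^d$, the Cartier--Manin entries are coefficients of $(s^2+(2-4t)s+1)^{(p-1)/2}$, and the $b_j$ are exactly the truncated hypergeometric functions that occur there. Since the inertia type $a=(1,1,d-1,d-1)$ satisfies $\sum a_i=2d$, the required simplicity is the same input used in the $2d$ case of Theorem~\ref{thm:firsteq}; I would invoke \cite[Proposition~5.4, Corollary~5.5]{bouw04} and verify that its argument applies when $p\equiv -1\bmod d$. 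Equivalently, one may reduce to the genus $(d-1)/2$ quotient family $Z_{+1,t}$, on which $C$ is block-diagonal and whose Cartier determinant multiplies to that of $X_t$, and apply the same simplicity statement there. Confirming that the cited simplicity of zeros persists in the $p\equiv -1\bmod d$ regime is the one step that needs genuine care; everything else is formal from the block structure and the symmetry $\gamma$.
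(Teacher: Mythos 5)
Your surrounding structure is essentially correct and overlaps substantially with the paper's argument. The evenness proof is exactly the paper's: it too deduces $a(X_t) = a(Z_{+1,t}) + a(Z_{-1,t}) = 2a(Z_{+1,t})$ from Lemma~\ref{L11hypaa}. Your disposal of $p \equiv 1 \bmod d$ by citing the equality clause of Theorem~\ref{thm:firsteq} (valid here since $n=4$ and the family is generically ordinary) is legitimate, and your eigenspace analysis for $p \equiv -1 \bmod d$ is sound: $C(L_j) \subseteq L_{d-j}$, the blocks $W_j = L_j \oplus L_{d-j}$ are anti-diagonal (note this pairing is fixed-point-free only because $d$ is odd, the standing assumption of the section), and the computation that $\gamma^*$ forces $b_j = c_j$ is correct. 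This refinement is not in the paper — the paper never needs the off-diagonal entries to coincide, getting evenness from the quotient curves instead — and your reduction of the lemma to the statement that each $b_j(t)$ has only simple zeros is exactly right.

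But that reduction is where the genuine gap sits, and your proposed fix does not close it. You suggest invoking \cite[Proposition~5.4, Corollary~5.5]{bouw04} and ``verifying that its argument applies when $p\equiv -1 \bmod d$''; in this paper those results are applied only under the hypothesis $p \equiv 1 \bmod d$, where the Cartier matrix is diagonal and its entries are the standard truncated hypergeometric polynomials, and no verification is offered that they cover the anti-diagonal entries $c_{pi-j}$ (with $j \equiv -i \bmod d$) arising when $p \equiv -1 \bmod d$. The paper takes a different and more direct route precisely to avoid this: using the hyperelliptic model $Y^2 = W^{2d} + (2-4t)W^d + 1$ of Lemma~\ref{L11hyp} and Yui's formula (\cite[page~381]{Yui}), it observes that the Cartier--Manin matrix is diagonal or anti-diagonal according to $p \equiv \pm 1 \bmod d$, so that $a(X_t)$ counts the entries vanishing at $t$ while $\aaa_{X_t}$ is the total order of vanishing of their product, and then cites \cite[Lemma~2.3]{hasegawa}, which asserts that each non-zero entry $c_{pi-j}$, as a polynomial in $t$, has distinct roots \emph{for both congruence classes} $p \equiv \pm 1 \bmod d$. (The remark following the lemma in the paper sketches a hypergeometric-ODE separability argument close in spirit to yours, but for all $p \nmid 2d$, and explicitly leaves the details to the reader; it is not the proof.) Until the simple-zeros input is sourced for $p \equiv -1 \bmod d$ --- e.g.\ by \cite[Lemma~2.3]{hasegawa} --- your proof of $\aaa_{X_t} = a(X_t)$ in that case is incomplete, as you yourself flag.
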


\begin{proof}
By Lemma~\ref{L11hyp}, $X$ has the equation $Y^2=f(W)$ where $f(W) =W^{2d}+(2-4t)W^d +1$.
If $p \nmid 2d$, then $f(W)$ is separable for $t \not = 0,1$.
The Cartier--Manin matrix $M$ is a $g \times g$ matrix where $g=d-1$.
By \cite[page~381]{Yui}, there is a basis for $H^0(X, \Omega^1)$
for which the $(i,j)$th entry of $M$ equals the coefficient $c_{pi-j}$ of $W^{pi-j}$ in $f(W)^{(p-1)/2}$, which is a polynomial in $t$.

Because $f(W)$ is a polynomial in $x=W^d$, note that $c_{pi-j}=0$ unless $d$ divides $pi-j$.
If $p \equiv 1 \bmod d$ (resp.\ $p \equiv -1 \bmod d$) then $M$ is diagonal (resp.\ anti-diagonal).
Up to sign, $\mathrm{det}(M)$ is the product of its non-zero entries. 
For $t \not = 0,1$, the $a$-number $a(X_t)$ is the number of these having $t$ as a root, 
while $\aaa_{X_t}$ is the order of vanishing of $\mathrm{det}(M)$ at $t$.  
By \cite[Lemma~2.3]{hasegawa}, when $p \equiv \pm 1 \bmod d$, 
each non-zero $c_{pi-j}$ has distinct roots.
It follows that $\aaa_{X_t}=a(X_t)$.
Finally, $a(X_t)$ is even by Lemma~\ref{L11hypaa}(2), since the $a$-numbers of $Z_{+1, t}$ and $Z_{-1,t}$ 
are the same.  
\end{proof}

\begin{remark}
After a conversation with Beukers, we think that Lemma~\ref{Laaa=a2} is true for all $p \nmid 2d$.
The reason is that the congruence $j \equiv pi \bmod d$ has a unique solution with $1 \leq j \leq d-1$. 
So every row (or column) of $M$ contains exactly one non-zero entry.
Writing $n=(p-1)/2$ and $pi-j=Kd$, then $c_{pi-j}$ equals the coefficient of $x^K$ in $(x^2+(2-4t)x+1)^n$.
One can show that
$c_{pi-j} \equiv \sum_{k=0}^{K} \binom{j/d}{k} \cdot \binom{1-j/d}{k} t^k \bmod p$.
These polynomials are the unique modulo $p$ polynomial solutions of degree less than $p$ 
of the hypergeometric function with parameters $j/d$, $1-j/d$, and $1$.
They are separable since they are the solutions of a second order linear differential equation, 
which shows $\aaa_{X_t} = a(X_t)$.  This value is even because of the symmetry between $j$ and $d-j$.  
We leave the details of this to an interested reader.
\end{remark}

\begin{remark}  \label{Rhypergeometric}
To determine the exact value of $a(X_t)$, it is necessary to determine whether 
 the $(d-1)/2$ different polynomials in $t$ have roots in common. 
This seems to be an open problem about hypergeometric functions.
\end{remark}



\subsection{Comparison of genus $2$ case with earlier work} \label{SikoA}

The $d=3$ case of Corollary~\ref{CcaseB} is 
compatible with earlier work of \cite{IKO} and \cite{hasegawa}.

Let $p \geq 5$, $d=3$ and $a=(1,1,2,2)$.
Consider the family of genus $2$ curves
\begin{equation} \label{EIKOfirst}
X_t: y^3=x(x-1)(x-t)^2.
\end{equation}

By Corollary~\ref{CcaseB}, the mass formula for this family is $\mu(\cF, p) = (p-1)/(4\cdot 9)$.
This mass formula is not stated in \cite{IKO} or \cite{hasegawa}.
We explain how it can be deduced from these papers, specifically \cite[Proposition~3.2, Theorem~3.3]{IKO}; 
see also \cite[Theorems~2.6, 2.7]{hasegawa}.  
 
This family \eqref{EIKOfirst} of curves $X_t$ of genus $g=2$ is characterized by having $S_3 \subset {\rm Aut}(X_t)$. 
In \cite[Section~1.3]{IKO}, the authors parametrize this family in a different way: 
for $u \not = 0,1$, 
\begin{equation} \label{EIKOfirstalt}
C_u:y_1^2=(x_1^3-1)(x_1^3-u).
\end{equation}

\begin{remark} \label{familynonordsupspec}
By \cite[Proposition~1.10]{IKO}, if $C_u$ is not ordinary, then it is \emph{superspecial}, meaning that 
$J_{C_u}$ is isomorphic to a product of $2$ supersingular elliptic curves.
This implies that $J_{C_u}$ is \emph{supersingular}, meaning that it is isogenous to a product of $2$
supersingular elliptic curves.
On this particular family $C_u$ (or $X_t$), the following properties are all equivalent
(being superspecial, being supersingular, having $p$-rank $0$, and being non-ordinary); this is a very unusual situation.
\end{remark}

\begin{proposition} \cite[Proposition~3.2]{IKO} \label{d3a1122IKO}
Let $p \geq 5$.
The number of isomorphism classes of supersingular curves in the family \eqref{EIKOfirstalt} is
\[N(3, (1,1,2,2), p) = \begin{cases} 
(p-1)/6 & \text{if } p \equiv 1 \bmod 6 \\ 
(p+1)/6 & \text{if } p \equiv 5 \bmod 6.
\end{cases}\]
\end{proposition}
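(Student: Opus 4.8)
The plan is to collapse the two-dimensional supersingularity condition on $C_u$ to a classical Eichler--Deuring count for an associated elliptic curve, and to anchor the leading term using the mass formula already computed for this family. First I would invoke Lemma~\ref{L11hypaa}: since $d=3$, the quotient curves $Z_{+1,t}$ and $Z_{-1,t}$ have genus $(d-1)/2 = 1$, so they are elliptic curves, and there is an isomorphism $\mathrm{Jac}(C_u) \simeq E \times E$ with $E := Z_{+1,t} \simeq Z_{-1,t}$. Hence $C_u$ is supersingular if and only if the single elliptic curve $E$ is supersingular; by Remark~\ref{familynonordsupspec} this is in turn equivalent to $C_u$ being non-ordinary. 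This reduction is what makes a clean count possible and places the problem squarely in Eichler--Deuring territory.

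Next I would make the condition explicit. Using Lemma~\ref{L11hyp}(1), write the family in the hyperelliptic model $Y^2=f(W)$ with $f(W)=W^6+(2-4t)W^3+1 = g(W^3)$ for a quadratic $g$. Following the recipe of Yui used in the proof of Lemma~\ref{Laaa=a2}, the Cartier--Manin matrix is read off from the coefficients of $f(W)^{(p-1)/2}=g(W^3)^{(p-1)/2}$; because this is a polynomial in $W^3$, the matrix is diagonal when $p\equiv 1 \bmod 3$ and anti-diagonal when $p\equiv 2\bmod 3$. In either case the non-ordinary locus is cut out by the vanishing of a single Deuring-type polynomial $D(t)$ of degree roughly $(p-1)/3$, whose roots are the non-ordinary parameter values. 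Separability of $D$ — so that each supersingular curve is counted once and $\aaa_X=a(X)=2$ — follows from the argument of Lemma~\ref{Laaa=a2} via \cite[Lemma~2.3]{hasegawa}, which applies here because $p\equiv \pm 1 \bmod 3$ for every $p\geq 5$.

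It then remains to convert the count of non-ordinary parameters into a count of isomorphism classes. I would use Lemma~\ref{L11hyp}(2), namely $X_t\simeq X_{1-t}$ (so generically two parameters correspond to one class), together with Lemma~\ref{L11hyp}(3) (generic $\#\mathrm{Aut}(X,\tau)=12$), to pass from roots of $D$ to classes. As an independent check on the leading term, Corollary~\ref{CcaseB} with $d=3$ gives $\mu(\cF,p)=(p-1)/36$, and Theorem~\ref{thm:firsteq} together with $\aaa_X=2$ and the generic weight $2/12=1/6$ yields the asymptotic $(p-1)/6$, which matches the $p\equiv 1\bmod 6$ value exactly.

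The hard part will be pinning down the exact constant, that is, the discrepancy between $(p-1)/6$ and $(p+1)/6$. This is the precise analogue of the $j=0,1728$ corrections in the classical formula \eqref{EED}: the $+1$ correction for $p\equiv 5\bmod 6$ must come from the special members of the family whose elliptic factor $E$ has extra automorphisms (CM by $\ZZ[\zeta_3]$, i.e.\ $j(E)=0$), since such a curve is supersingular precisely when $p\equiv 2\bmod 3$. I would therefore locate the special parameter values where $\#\mathrm{Aut}(X,\tau)>12$, decide their supersingularity by the classical CM criterion, and verify that including them with the correct multiplicity changes the class count by exactly $+1$ when $p\equiv 5\bmod 6$ and by $0$ when $p\equiv 1 \bmod 6$. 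Ensuring that the bookkeeping of these exceptional orbits is internally consistent — both with the root count of $D$ and with the weighted mass $(p-1)/36$ — is the main obstacle.
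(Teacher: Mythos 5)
Your skeleton is the same as the paper's, which simply follows \cite{IKO}: realize the non-ordinary parameter values as the roots of a single separable polynomial of degree $w=\lfloor(p-1)/3\rfloor$, quotient by the isomorphism involution on the parameter, and correct for the unique special member with extra automorphisms. The paper imports the decisive facts wholesale: the explicit polynomial $G(z)$ of degree $w$ scales the entire Cartier--Manin matrix (\cite[Prop.~1.8]{IKO}), its roots are distinct and avoid $u=0,1$ (\cite[Prop.~1.14]{IKO}), $C_{-1}$ is supersingular exactly when $p\equiv 5\bmod 6$ (\cite[Prop.~1.11]{IKO}), and $C_{u_1}\simeq C_{u_2}$ iff $u_2=u_1$ or $u_2=1/u_1$ (\cite[Lemma~1.5]{IKO}). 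Your substitutes for the first two items are legitimate: evenness of the $a$-number (Lemmas~\ref{L11hypaa} and \ref{Laaa=a2}) forces the two (anti-)diagonal entries of the Cartier--Manin matrix to share their roots, and \cite[Lemma~2.3]{hasegawa} gives simplicity, so a single separable $D(t)$ does cut out the non-ordinary locus.

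However, the proposal stops exactly where the proposition begins, and the three steps you name as targets are not executed. (i) The \emph{exact} degree of $D(t)$: ``roughly $(p-1)/3$'' is not enough; one must check that the leading coefficient (a multinomial coefficient of arguments less than $p$ times a power of $-4$) is nonzero mod $p$, so the degree is exactly $\lfloor(p-1)/3\rfloor$, and also that the degenerate parameters $t=0,1$ are not roots. (ii) The special point: you assert the $+1$ correction ``must'' come from a $j=0$ CM factor, and the criterion would indeed settle it (at $t=1/2$, i.e.\ $u=-1$, the curve is $Y^2=W^6+1$, whose elliptic quotient $y^2=x^3+1$ is supersingular iff $p\equiv 2\bmod 3$), but this is asserted rather than proved. (iii) The parity bookkeeping that produces the two cases: $\lfloor(p-1)/3\rfloor$ is even when $p\equiv1\bmod6$ and $t=1/2$ is not a root, so all roots pair up under $t\mapsto 1-t$, giving $(p-1)/6$; it is odd when $p\equiv5\bmod6$ and $t=1/2$ is then the unique fixed root, giving $(p-5)/6+1=(p+1)/6$. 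Since you yourself flag this as ``the main obstacle,'' what you have is an outline of the paper's argument rather than a proof of the statement: nothing in it would fail, but the statement \emph{is} the exact constant, and that is the part left undone. (Also, the opening reduction to Eichler--Deuring via $\mathrm{Jac}(C_u)\simeq E\times E$ is never actually used, and by itself it cannot give the count without computing the degree and ramification of the map $t\mapsto j(E_t)$; the mass-formula cross-check only recovers the leading term, since it presupposes knowledge of the exceptional automorphism orbits.)
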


\begin{proof} We briefly sketch the proof.
Let $w=\lfloor (p-1)/3 \rfloor$.
Consider the polynomial
\[G(z) = \sum_{j=0}^w \binom{(p-1)/2}{\lfloor (p+1)/6 \rfloor+j}\binom{(p-1)/2}{j} z^j.\]
The authors prove that the Cartier--Manin matrix of $C_u$ is a scaling of a diagonal or anti-diagonal
$2 \times 2$ matrix by the constant $G(u)$.
This shows that $C_u$ is not ordinary (in fact, superspecial) if and only if 
$u$ is a root of $G(z)$, \cite[Proposition~1.8]{IKO}.
Using Igusa's strategy,
they prove that the roots of $G(z)$ are distinct, and $u=0$ and $u=1$ are not roots
\cite[Proposition~1.14]{IKO}. 
The number of values of $u$ such that $C_u$ is supersingular is thus ${\rm deg}(G(x))=w$. 

The value $u=-1$ is handled separately because $C_{-1}$ has extra automorphisms. 
When $u =-1$, then $C_{-1}$ is supersingular if and only if $p \equiv 5 \bmod 6$, \cite[Proposition~1.11]{IKO}.
By \cite[Lemma~1.5]{IKO}, $C_{u_1} \simeq C_{u_2}$ if and only if either $u_2=u_1$ or $u_2 =1/u_1$.  This divides the count by $2$ when $p \equiv 1 \bmod 6$, or when $p \equiv 5 \bmod 6$ and 
$u \not =-1$.
\end{proof}


There is a small error in \cite[Theorem 3.3]{IKO}, because the family \eqref{EIKOfirstalt} does not specialize to 
\cite[case (5)]{IKO}, which consists of curves whose reduced automorphism group is $S_4$.
Once this is corrected, their work yields the same mass formula $(p-1)/(4 \cdot 9)$, as we explain below.

Let $\epsilon_3=1 - \binom{-3}{p}$, which equals $0$ if $p \equiv 1 \bmod 6$ and equals $2$ if $p \equiv 5 \bmod 6$.

\begin{proposition} \label{PcountbyautCaseB}
Let $p \geq 5$.
The information in \cite{IKO} yields the mass formula $(p-1)/(4 \cdot 9)$ for the 
non-ordinary curves in the family
\eqref{EIKOfirstalt}.
\end{proposition}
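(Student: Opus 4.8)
The plan is to recover the value $\mu(\cF,p) = (p-1)/(4\cdot 9)$ obtained intrinsically in Corollary~\ref{CcaseB} by instead summing the local contributions $\aaa_X/\#\mathrm{Aut}(X,\tau)$ of Theorem~\ref{thm:firsteq} over the supersingular curves enumerated in \cite{IKO}. First I would specialize the right-hand side of Theorem~\ref{thm:firsteq} to the family \eqref{EIKOfirstalt}. By Remark~\ref{familynonordsupspec}, every non-ordinary $C_u$ is superspecial, so $a(C_u)=g=2$; since $p\equiv\pm1\bmod 3$ for every $p\geq 5$, Lemma~\ref{Laaa=a2} gives $\aaa_{C_u}=a(C_u)=2$. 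Because the order-$3$ automorphism $\tau$ generates a normal subgroup of $\mathrm{Aut}(C_u)$ for every curve arising here (the reduced automorphism group always containing $S_3\supset\langle\bar\tau\rangle$), we have $\mathrm{Aut}(C_u,\tau)=\mathrm{Aut}(C_u)$, and the sum collapses to $\mu(\cF,p)=\sum_{[C_u]} 2/\#\mathrm{Aut}(C_u)$, taken over isomorphism classes of supersingular members.

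Next I would feed in the enumeration of \cite{IKO}. Proposition~\ref{d3a1122IKO} supplies the number $N(3,(1,1,2,2),p)$ of such classes, equal to $(p-1)/6$ or $(p+1)/6$ according to whether $\epsilon_3=0$ or $\epsilon_3=2$, together with the fact that the generic supersingular member has reduced automorphism group $S_3$ and hence $\#\mathrm{Aut}=12$, contributing $2/12$ apiece. The single member requiring separate handling is $C_{-1}$, which is supersingular exactly when $\epsilon_3=2$ and which carries extra automorphisms coming from its enlarged (dihedral $D_6$) reduced automorphism group. I would compute its automorphism group and its local multiplicity in characteristic $p$, recording that $u=-1$ is the fixed point of the involution $u\mapsto 1/u$ that identifies isomorphic members, so that $C_{-1}$ enters the class count exactly once. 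Assembling the generic contributions with that of $C_{-1}$ should then produce the \emph{single} value $(p-1)/(4\cdot 9)$ in both congruence classes, the congruence-dependent jump in $N$ being precisely absorbed by the non-generic weight attached to $C_{-1}$.

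The step I expect to be the main obstacle is exactly this bookkeeping at $C_{-1}$, together with the reconciliation of conventions with \cite[Theorem~3.3]{IKO}. As flagged before the statement, \cite[Theorem~3.3]{IKO} erroneously attributes to this family a specialization to its case~(5), whose members have reduced automorphism group $S_4$; the family \eqref{EIKOfirstalt} in fact contains no such curve, its only member with enlarged automorphisms being $C_{-1}$. I would first isolate and delete this spurious term, and then verify that the honest automorphism datum at $C_{-1}$, inserted with the correct multiplicity, yields the advertised mass. The delicacy is that the weight of $C_{-1}$ must be measured on the coarse base underlying $\cF$, where the ramification of the $u\mapsto 1/u$ identification at the fixed point $u=-1$ interacts with the order of vanishing of the Cartier determinant; it is this interaction, rather than a naive count of curves against their automorphism orders, that enforces the congruence-independent answer $(p-1)/(4\cdot 9)$ and must be tracked with care to close the argument.
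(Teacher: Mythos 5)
Your overall strategy (summing $\aaa_X/\#\mathrm{Aut}(X,\tau)$ with $\aaa_X=2$, obtained from superspecialness via Remark~\ref{familynonordsupspec} and Lemma~\ref{Laaa=a2}, over the curves enumerated in \cite{IKO}) is the same as the paper's, but your accounting has a genuine gap: you sum only over the \emph{smooth} non-ordinary members of the family, whereas the mass formula \eqref{def:massF} is an intersection number with the closure of the family in $\overline{\mathcal{M}}_2$, so non-ordinary \emph{stable} curves in the boundary contribute as well. Concretely, the closure of the family \eqref{EIKOfirstalt} contains a singular stable curve $S$ given by joining two copies of the elliptic curve $y^2=x^3-1$; it is non-ordinary exactly when $p\equiv 5 \bmod 6$, and since $\#\mathrm{redAut}(S)=36$ (six automorphisms of each elliptic factor, plus the swap), it contributes $1/36$ to the mass. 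Your claim that the congruence-dependent jump in $N(3,(1,1,2,2),p)$ is ``precisely absorbed by the non-generic weight attached to $C_{-1}$'' is arithmetically false: for $p\equiv 5\bmod 6$ your sum yields $\frac{p-5}{6}\cdot\frac{2}{12}+\frac{2}{24}=\frac{p-2}{36}$, which falls short of $(p-1)/36$ by exactly the boundary term $1/36$. No bookkeeping at $u=-1$ can repair this: Theorem~\ref{thm:firsteq} pins the weight of $C_{-1}$ at $\aaa_{C_{-1}}/\#\mathrm{Aut}(C_{-1},\tau)=2/24=1/12$, and the identification $u\mapsto 1/u$ with fixed point $u=-1$ is already accounted for stack-theoretically by the automorphism factor, so the ``interaction'' you hope will rescue the count does not exist.

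A secondary omission: the statement allows $p=5$, which needs separate treatment. In characteristic $5$ the unique non-ordinary member is $y^2=x^5-x$, whose reduced automorphism group is $\mathrm{PGL}_2(5)\simeq S_5$ of order $120$; here $\mathrm{Aut}(X,\tau)\neq \mathrm{Aut}(X)$, since the normalizer of a $3$-cycle in $S_5$ has order $12$, contradicting your blanket claim that $\langle \tau\rangle$ is always normal in $\mathrm{Aut}(C_u)$ (that claim is valid only for $p\geq 7$, where Igusa's classification forces $\mathrm{redAut}(C_u)$ to be $S_3$ or $D_6$). The paper handles $p=5$ separately and again needs the boundary point: $1/12+1/36=(5-1)/36$.
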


\begin{proof}
Write $C=C_u$.  Then $C$ is hyperelliptic.  Let $\mathrm{redAut}(C,\tau)$ denote the 
quotient of $\mathrm{Aut}(C,\tau)$ by the subgroup generated by the hyperelliptic involution.
If $C$ is non-ordinary, then its Cartier--Manin matrix is the zero matrix by Remark~\ref{familynonordsupspec}.
Together with Lemma~\ref{Laaa=a2}, this implies that $\alpha_{C}=2$.
So $\mu(\cF,p)=\sum_{[C]} \frac{1}{\#\mathrm{redAut}(C, \tau)}$, where the sum is over the isomorphism classes 
of non-ordinary curves $C$ in the family \eqref{EIKOfirstalt}.

For the family \eqref{EIKOfirstalt}: 
let $R_n$ be the number of isomorphism classes of non-ordinary 
curves $C$ such that $\#{\rm redAut}(C, \tau) =n$; 
let $R_\infty$ be the number of isomorphism classes of non-ordinary 
singular curves in (the closure of) the family in $\overline{\mathcal{M}}_2$. 

The boundary of the family \eqref{EIKOfirstalt} contains a singular curve $S$
composed of the join of two elliptic curves, each having equation $y^2=x^3-1$.
The curve $S$ is supersingular if and only if $p \equiv 5 \bmod 6$.  
So $R_\infty = \epsilon_3/2$.  
Also $\#\mathrm{redAut}(S) = 6^2$, because each elliptic curve has $6$ automorphisms and there is an automorphism
transposing the two elliptic curves.

If $p=5$, there is a unique non-ordinary curve $X$ in the family by \cite[Theorem~3.3(II)]{IKO}; 
an alternative equation for this curve is $y^2=x^5-x$.
It has reduced automorphism group 
$\mathrm{PGL}_2(5) \simeq S_5$ of order $120$. 
The normalizer of a $3$-cycle in $S_5$ has order $12$.
So $\#\mathrm{redAut}(X, \tau) = 12$.
This yields the mass formula $(1/12) + (1/36)$, which equals $(p-1)/(4 \cdot 9)$.

Suppose $p \geq 7$.
By \cite{Igusa}, the only possibilities for $\mathrm{redAut}(C_u)$ are
$S_3$ or $D_{6}$.  The case $S_4$ does not occur - this is a small error in \cite{IKO}.
Every subgroup of order $3$ is normal in $S_3$ and in $D_6$, 
so $\mathrm{redAut}(C_u,\tau)=\mathrm{redAut}(C_u)$.
This shows that $R_n=0$ unless $n=6$ or $n=12$.

When $u = -1$, then $C_u$ specializes to \cite[Case (4)]{IKO}; it has reduced automorphism group $D_{6}$ 
and is supersingular if and only if $p \equiv 5 \bmod 6$.  So $R_{12} = \epsilon_3/2$.

By Proposition~\ref{d3a1122IKO}, the total number of supersingular curves $C_u$ with 
$\mathrm{redAut}(C_u) \simeq S_3$ is $R_6= (p-1)/6 - \epsilon_3/3$.
This equals $(p-1)/6$ if $p \equiv 1 \bmod 6$ and equals $(p-5)/6$ if 
$p \equiv 5 \bmod 6$.

In conclusion, this yields the mass formula:
\begin{equation} 
R_6/6 + R_{12}/12 + R_\infty/36  = 
(p-1)/36 + e_3(-1/9 + 1/12 + 1/36) = (p-1)/36.
\end{equation}
\end{proof}

\section{Another family of hyperelliptic curves with dihedral action} \label{Sanothercase}

Let $d \geq 4$ be even and $a=(1,d/2,d/2,d-1)$.  
We consider the family of curves
\begin{equation} \label{Ecasealt}
X_t:  y^d=x(x-1)^{d/2}(x-t)^{d/2}.
\end{equation}

Over the two branch points $x=1$ and $x=t$, the fiber of $h:X_t \to {\mathbb P}^1$ 
contains $d/2$ points, each having an inertia group of size $2$. 
Write $d_1=d/2$.  By Lemma~\ref{LRH}, the genus of $X_t$ is $d_1$.

\subsection{The mass formula for $a=(1,d/2,d/2, d-1)$}

\begin{corollary}\label{Ccasesecond}
Suppose $d \geq 4$ with $d$ even and $a=(1,d/2,d/2, d-1)$.  
Suppose $p \nmid 2d$.
The mass formula for the
non-ordinary curves in the family $X_t:  y^d=x(x-1)^{d-1}(x-t)^{d/2}$ is 
\[\mu(d,(1,d/2,d/2, d-1),p) = \begin{cases} (p-1)/2^5 & \text{if } d \equiv 0 \bmod 4\\
(p-1)(d^2+4)/(2^7d^2) & \text{if } d \equiv 2 \bmod 4.
\end{cases}\]
\end{corollary}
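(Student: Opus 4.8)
**The plan is to apply Theorem~\ref{thm:secondeq} directly, reducing the problem to the computation of two quantities: the degree $\delta_{d,a}$ of the tautological map $\phi_{d,a}$ and the Hodge degree ${\mathrm{deg}}_{d,a}(\lambda_1)$.**

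First I would verify that the hypotheses of Theorem~\ref{thm:secondeq} are met, namely that the generic curve in the family is ordinary for $p \nmid 2d$. For this I would compute the signature $\vec{f}$ using Lemma~\ref{LRH}(2) with the inertia type $a=(1,d/2,d/2,d-1)$, and check the Frobenius-orbit constancy condition of Proposition~\ref{Pbouw}. (A natural subtlety is that unlike the case in Corollary~\ref{CcaseB}, the entries of $a$ are not all coprime to $d$, so $\gcd(d/2,d)=d/2$ and the eigenspace dimensions $f_j$ will genuinely vary with $j$; the generic ordinarity may require the stated hypothesis $p \nmid 2d$ rather than following automatically.)

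Next I would compute $\delta_{d,a}$ via Lemma~\ref{lem:deg}, counting the set $S_{d,a} = \{(\ell,\sigma) \in (\ZZ/d\ZZ)^\ast \times S_4 \mid \sigma(a) = \ell^{-1}a\}$. This is the first place where the two congruence classes of $d$ modulo $4$ will diverge: the stabilizer structure of the inertia vector $(1,d/2,d/2,d-1)$ under the combined action of scaling by units and permutation depends on whether $d/2$ is even or odd, equivalently whether $d \equiv 0$ or $2 \bmod 4$. I expect this case distinction to propagate into the final answer. Then I would evaluate ${\mathrm{deg}}_{d,a}(\lambda_1)$ using the closed formula \eqref{Ehurwitz4} from Theorem~\ref{ThmCOSgeneral}, substituting the relevant gcd values: $\gcd(a_i,d)$ for each $i$ and $\gcd(a_i+a_4,d)$ for $i=1,2,3$. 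Here the terms $\gcd(d/2,d)=d/2$ and the sums such as $\gcd(1+(d-1),d)=\gcd(d,d)=d$ and $\gcd(d/2+(d-1),d)=\gcd(3d/2-1,d)$ must be evaluated carefully, again splitting on $d \bmod 4$.

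The main obstacle will be the bookkeeping in the gcd computation of \eqref{Ehurwitz4} and in the cardinality count of $S_{d,a}$, getting every case distinction consistent so that the two branches assemble into exactly the stated piecewise formula. Once ${\mathrm{deg}}_{d,a}(\lambda_1)$ and $\delta_{d,a}$ are in hand, the result is immediate from Corollary~\ref{C4point} (equivalently Theorem~\ref{thm:secondeq}): $\mu(\cF,p) = (p-1){\mathrm{deg}}_{d,a}(\lambda_1)/\delta_{d,a}$. I would organize the final step as two short paragraphs, one for $d \equiv 0 \bmod 4$ yielding $(p-1)/2^5$ and one for $d \equiv 2 \bmod 4$ yielding $(p-1)(d^2+4)/(2^7 d^2)$, each simply substituting the computed values.
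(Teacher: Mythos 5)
Your plan is methodologically identical to the paper's proof: check generic ordinarity via the signature and Proposition~\ref{Pbouw} (here $f_j=0$ for $j$ even and $f_j=1$ for $j$ odd, and multiplication by the odd prime $p$ preserves parity modulo the even integer $d$, so the Frobenius-orbit condition holds for every $p\nmid 2d$, confirming your hedge), compute $\delta_{d,a}$ from Lemma~\ref{lem:deg} and ${\mathrm{deg}}_{d,a}(\lambda_1)$ from \eqref{Ehurwitz4}, then invoke Theorem~\ref{thm:secondeq}. One misprediction: the case split on $d\bmod 4$ does \emph{not} occur in $\delta_{d,a}$. Since $\gcd(d/2,d)=d/2>1$, the entry $d/2$ is never a unit, and since every $\ell\in(\ZZ/d\ZZ)^\ast$ is odd we have $\ell^{-1}\cdot d/2\equiv d/2 \bmod d$; so $\sigma(a)=\ell^{-1}a$ forces $\ell^{-1}\equiv\pm1$, and each of $\ell=\pm1$ admits exactly two permutations (the swap of the two $d/2$-slots, together with, for $\ell=-1$, the transposition of $1$ and $d-1$). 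Hence $\delta_{d,a}=4$ uniformly, as in the paper. The dichotomy lives entirely in \eqref{Ehurwitz4}, via $\gcd(d/2+(d-1),d)=\gcd(d/2-1,d)=\gcd(d/2+1,d)$, which equals $1$ when $d\equiv0\bmod4$ and $2$ when $d\equiv2\bmod4$.

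A more serious warning: carrying out your substitution honestly in the branch $d\equiv2\bmod4$ gives ${\mathrm{deg}}_{d,a}(\lambda_1)=(d_1^2+1)/(8d_1^2)$ with $d_1=d/2$, hence $\mu(\cF,p)=(p-1)(d_1^2+1)/(2^5 d_1^2)=(p-1)(d^2+4)/(2^5 d^2)$, and \emph{not} the stated $(p-1)(d^2+4)/(2^7 d^2)$. (Test $d=6$, $a=(1,3,3,5)$: \eqref{Ehurwitz4} gives $(36-20+44)/432=5/36$ and $\delta_{d,a}=4$, so $\mu=5(p-1)/144$, whereas the displayed statement would give $5(p-1)/576$.) This matches the paper's own proof, whose final display is written in terms of $d_1$; the exponent $2^7$ in the corollary's statement appears to come from substituting $d^2$ for $d_1^2$ in the denominator, a factor-of-$4$ slip. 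So your plan is correct and reproduces the paper's computation, but it will not, as you promise, assemble into exactly the stated piecewise formula: in the second branch you should obtain $2^5 d^2$ in the denominator.
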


\begin{proof}
The signature of the $\mu_d$-cover is given by the dimensions $f_j =0$ if $j$ is even 
and $f_j = 1$ if $n$ is odd.
By Proposition~\ref{Pbouw}, 
$X_t$ is ordinary for a generic value of $t$. 
By Lemma~\ref{lem:deg}, $\delta_{d,a} = 4$. 

Write $d_1=d/2$.  Let $d_2 := \mathrm{gcd}(d_1+1, d)$.  Then $d_2 =1$ if $d \equiv 0 \bmod 4$ and
$d_2=2$ if $d \equiv 2 \bmod 4$.
The result follows from Theorem~\ref{thm:secondeq} once we use
Theorem~\ref{ThmCOSgeneral} to compute:
\[{\mathrm{deg}}_{d,a}(\lambda_1) = (d^2 - (2+2d_1^2) + (d^2+2d_2^2))/12d^2 = 
\begin{cases} 1/8 &  \text{if } d \equiv 0 \bmod 4\\
(d_1^2+1)/8d_1^2 &  \text{if } d \equiv 2 \bmod 4.
\end{cases}\]
\end{proof}

For this family, one can check that $X_t$ is hyperelliptic and that 
$\#\mathrm{Aut}(X_t, \tau) = 2d$, unless $t = -1$ in which case $\#\mathrm{Aut}(X_t, \tau) = 4d$.
We do not have results on the $a$-number of $X_t$ in general.

\subsection{Comparison of genus $2$ case with previous work} \label{SikoB}

Let $d=4$ and $a=(1,2,2,3)$.
This case of Corollary~\ref{Ccasesecond} is 
compatible with earlier work of \cite{IKO}.
Consider the family
\begin{equation} \label{EIKOsecond}
X_t:y^4=x(x-1)^2(x-t)^2.
\end{equation} 

By Corollary~\ref{Ccasesecond}, $\mu(\cF, p) = (p-1)/2^5$.
This mass formula is not stated in \cite{IKO}.
We explain how it can be deduced from that paper. 

This family \ref{EIKOsecond} of genus $2$ curves is called Case (3) in \cite{Igusa} and \cite{IKO}.
The curves $X_t$ in the family are characterized by having $D_4 \subset \mathrm{Aut}(X_t)$. 
The family can also be parametrized as:
\[Y^2=X(X^2-1)(X-\lambda)(X-1/\lambda),\]
or, writing $\beta=(\lambda+1)^2/(\lambda-1)^2$, by
\begin{equation} \label{Eusethis}
C_\beta: Y_1^2=X_1(X_1^2-1)(X_1^2-\beta).
\end{equation}

The automorphism $\tau(X_1,Y_1) = (-X_1, iY_1)$ has order $4$ and 
$\tau^2$ is the hyperelliptic involution.

By \cite[Proposition~1.10]{IKO}, if $X_t$ is not ordinary, then it is superspecial, and thus supersingular.

\begin{proposition} \label{a1322total} \cite[Proposition~3.2]{IKO}
Write $p=8k + \epsilon$ where $\epsilon \in \{1,3,5,7\}$.
The number of supersingular curves in the family is $k$ if $\epsilon \in \{1,3\}$
and is $k+1$ if $\epsilon \in \{5,7\}$.
\end{proposition}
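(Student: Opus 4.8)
The plan is to follow the strategy of \cite{IKO} that underlies Proposition~\ref{d3a1122IKO}, now applied to the genus $2$ hyperelliptic model $C_\beta: Y^2 = X(X^2-1)(X^2-\beta)$. First I would set $n=(p-1)/2$ and $f(X)=X(X^2-1)(X^2-\beta)$, so that $f(X)^n = X^n (X^2-1)^n (X^2-\beta)^n$, and invoke \cite{Yui}: in the basis $dX/Y,\ X\,dX/Y$ of $H^0(C_\beta,\Omega^1)$, the Cartier--Manin matrix has $(i,j)$ entry the coefficient $c_{ip-j}$ of $X^{ip-j}$ in $f(X)^n$. The key observation is that every monomial of $f(X)^n$ has exponent congruent to $n$ modulo $2$, which forces the matrix to be diagonal when $p\equiv 1 \bmod 4$ and anti-diagonal when $p\equiv 3 \bmod 4$. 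This reflects the order-$4$ automorphism $\tau(X,Y)=(-X,iY)$, whose two eigenlines on $H^0$ are fixed or interchanged by the Cartier operator according to $p \bmod 4$.

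Second, I would compute the two nonzero entries explicitly by expanding $(X^2-1)^n(X^2-\beta)^n$. After factoring out the common power of $\beta$, each relevant entry equals, up to sign and a power of $\beta$, the single polynomial
\[
H(\beta)=\sum_{b=0}^{n/2}\binom{n}{n/2-b}\binom{n}{b}\beta^{b}
\qquad (p\equiv 1\bmod 4),
\]
with the analogous polynomial $\tilde H$ (replacing $n/2$ by $(n-1)/2$) when $p\equiv 3\bmod 4$. The upshot is that the determinant of the Cartier--Manin matrix equals $\pm\,\beta^{c}H(\beta)^2$ for an explicit $c$. Hence a smooth $C_\beta$ is non-ordinary if and only if $H(\beta)=0$, and in that case both entries vanish simultaneously, so $a(C_\beta)=2$ and $C_\beta$ is superspecial, matching \cite[Proposition~1.10]{IKO}. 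A direct count gives $\deg H=(p-1)/4$, which is $2k$ if $\epsilon=1$ and $2k+1$ if $\epsilon=5$, and $\deg\tilde H=(p-3)/4$, which is $2k$ if $\epsilon=3$ and $2k+1$ if $\epsilon=7$.

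Third, I would establish separability by Igusa's argument, as in \cite[Proposition~1.14]{IKO}: $H$ (resp.\ $\tilde H$) is a truncated hypergeometric polynomial solving a second order linear ODE modulo $p$, so a repeated root would force it to vanish identically, a contradiction; one also checks $\beta=0,1$ are not roots. To pass from roots to isomorphism classes, I would use the isomorphism $C_\beta\cong C_{1/\beta}$, which comes from the $\PGL_2$ symmetry permuting the six branch points $\{0,\infty,\pm1,\pm\sqrt{\beta}\}$ (equivalently $\lambda\mapsto -\lambda$ in the model $Y^2=X(X^2-1)(X-\lambda)(X-1/\lambda)$ with $\beta=(\lambda+1)^2/(\lambda-1)^2$). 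Since $C_\beta$ supersingular forces $C_{1/\beta}$ supersingular, the root set of $H$ is stable under $\beta\mapsto 1/\beta$, whose only fixed points are the singular $\beta=1$ and the special $\beta=-1$; counting orbits divides the count by $2$ away from $\beta=-1$.

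Finally, I would treat $\beta=-1$ separately, since $C_{-1}: Y^2=X^5-X$ has its six branch points forming an octahedron and thus extra automorphisms (the $S_4$ case of \cite{Igusa} and \cite{IKO}); a direct supersingularity test shows $C_{-1}$ is supersingular exactly when $\deg H$ (resp.\ $\deg\tilde H$) is odd, i.e.\ when $\epsilon\in\{5,7\}$, accounting for the fixed point contributing $+1$. Assembling: for $\epsilon\in\{1,3\}$ the degree is $2k$ and $-1$ is not a root, giving $2k/2=k$ classes; for $\epsilon\in\{5,7\}$ the degree is $2k+1$ and $-1$ is a root, giving $(2k)/2+1=k+1$ classes. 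I expect the separability step to be the main obstacle, together with reconciling the parity of $\deg H$ with the direct supersingularity computation at $\beta=-1$.
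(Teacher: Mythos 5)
Your proposal is correct and takes essentially the same route as the proof the paper relies on for this statement, namely the strategy of \cite[Propositions~1.8--1.14, 3.2]{IKO} that the paper sketches for the analogous Proposition~\ref{d3a1122IKO}: a diagonal or anti-diagonal Cartier--Manin matrix according to $p \bmod 4$, truncated hypergeometric entries whose separability follows by Igusa's argument, the identification $C_\beta \simeq C_{1/\beta}$, and separate treatment of the extra-automorphism point $\beta=-1$. Your parity derivation of the supersingularity of $C_{-1}$ from the palindromic symmetry of $H$ is a minor self-contained variant of the citation to \cite[Proposition~1.12]{IKO}, but the overall argument is the same.
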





Recall that $\epsilon_3=1 - \binom{-3}{p}$. 
Let $\epsilon_1 = 1 - \binom{-1}{p}$. 
Let $\epsilon_2 = 1 - \binom{-2}{p}$. 

\begin{proposition} \label{PcountbyautCasesecond}
Let $p \geq 7$.
The information in \cite{IKO} yields the mass formula $(p-1)/2^5$ for the 
non-ordinary curves in the family \eqref{EIKOsecond}.
\end{proposition}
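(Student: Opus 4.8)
The plan is to mirror the strategy of Proposition~\ref{PcountbyautCaseB}: convert the intersection-theoretic mass formula into a weighted count of isomorphism classes of non-ordinary curves governed by the automorphism data of \cite{IKO}, and then check that this count equals the value $(p-1)/2^5$ delivered by Corollary~\ref{Ccasesecond}. Throughout, $C$ denotes a member of the family \eqref{EIKOsecond} written in the form \eqref{Eusethis}, with $\tau^2$ the hyperelliptic involution.

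First I would reduce $\mu(\cF,p)$ to a sum over reduced automorphism groups. By \cite[Proposition~1.10]{IKO}, a non-ordinary $C$ is superspecial, so its Cartier--Manin matrix is identically zero and $a(C)=2$. Since $d=4$, every odd prime satisfies $p\equiv\pm1\bmod 4$, so the hypothesis of a Lemma~\ref{Laaa=a2}-type argument is automatic: because $f(X_1)=X_1(X_1^2-1)(X_1^2-\beta)$ is odd, $f^{(p-1)/2}$ is even or odd according to $p\bmod 4$, and the Cartier--Manin matrix is a scalar multiple of a fixed diagonal or anti-diagonal matrix by a polynomial $G(\beta)$ whose roots are simple (the Igusa-style separability input of \cite[Proposition~1.14]{IKO}). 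Hence $\det$ of the Cartier operator vanishes to order exactly $2$ at each non-ordinary point, giving $\aaa_C=2$. As $\#\mathrm{Aut}(C,\tau)=2\,\#\mathrm{redAut}(C,\tau)$, Theorem~\ref{thm:firsteq} yields
\[
\mu(\cF,p)=\sum_{[C]}\frac{\aaa_C}{\#\mathrm{Aut}(C,\tau)}=\sum_{[C]}\frac{1}{\#\mathrm{redAut}(C,\tau)},
\]
the sum running over isomorphism classes of non-ordinary curves in the family.

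Next I would assemble the count. Proposition~\ref{a1322total} supplies the total number of supersingular members as a function of $p\bmod 8$, and the isomorphism relation on the parameter from \cite{IKO} identifies which parameter values give the same curve. I would then stratify by reduced automorphism group: the generic member has $\#\mathrm{Aut}(X_t,\tau)=2d=8$, hence $\#\mathrm{redAut}(C,\tau)=4$, while the special point $t=-1$ has $\#\mathrm{Aut}(X_t,\tau)=4d=16$ and hence $\#\mathrm{redAut}=8$. Any further jumps to larger strata, together with the boundary points of $\overline{\mathcal M}_2$ where $C_\beta$ degenerates (at $\beta=0,1,\infty$) to a product of elliptic curves, must be located using \cite[Theorem~3.3]{IKO} and Igusa's classification. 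For each such special or boundary curve I would record its reduced automorphism order and, through the congruence invariants $\epsilon_1,\epsilon_2,\epsilon_3$, the primes for which it is supersingular.

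The final step writes $R_n$ for the number of non-ordinary isomorphism classes with $\#\mathrm{redAut}(C,\tau)=n$ and $R_\infty$ for the boundary contribution, substitutes the stratum counts into the weighted sum $R_4/4+R_8/8+\cdots$ (including the boundary term), and checks that the congruence-dependent pieces cancel, leaving $(p-1)/2^5$, exactly as the $\epsilon_3$-terms canceled in Proposition~\ref{PcountbyautCaseB}. The main obstacle is precisely this bookkeeping: correctly enumerating every automorphism stratum of the $D_4$-family meeting the supersingular locus, pinning down each reduced automorphism order and its supersingularity congruence, and verifying that the combination of $\epsilon_1,\epsilon_2,\epsilon_3$ contributions vanishes. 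The one genuinely analytic input is establishing $\aaa_C=2$ uniformly in $p$ rather than only under a congruence on $p$, and that is exactly where the oddness of $f$ and the separability of $G$ are used.
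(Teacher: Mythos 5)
Your overall strategy is exactly the paper's: use $\aaa_C=2$ (superspecial implies vanishing Cartier--Manin matrix, simple roots via \cite[Propositions~1.10, 1.14]{IKO}) to reduce $\mu(\cF,p)$ to $\sum_{[C]} 1/\#\mathrm{redAut}(C,\tau)$, then stratify by automorphism behavior, add the boundary contribution, and compare with Proposition~\ref{a1322total}. However, your proposal defers precisely the content that constitutes the proof, and that deferred bookkeeping is where this family genuinely differs from the $d=3$ case of Proposition~\ref{PcountbyautCaseB}. Concretely, you never identify the two special smooth fibers: $\beta=9$, where $\mathrm{redAut}(C_\beta)\simeq D_6$, and $\beta=-1$, where $\mathrm{redAut}(C_\beta)\simeq S_4$; and you never carry out the normalizer computations that determine the weights. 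Unlike the $d=3$ family, here $\mathrm{redAut}(C,\tau)\neq\mathrm{redAut}(C)$ at the special points: $\tau$ reduces to a reflection in $D_6$ with normalizer the Klein four-group (weight $1/4$, the \emph{same} as a generic fiber), and to a $2$-$2$ cycle in $S_4$ with normalizer of order $8$ (weight $1/8$). Your stratification by $\#\mathrm{Aut}(X_t,\tau)$ alone renders the $D_6$ point invisible (it has the generic normalizer order $8$), which happens to be harmless for the weighted sum but must still be reconciled with the total count $k+\epsilon_2/2$ from Proposition~\ref{a1322total}; the paper does this explicitly via $R_4=k-\epsilon_3/2$, $R_{12}=\epsilon_3/2$, $R_{24}=\epsilon_2/2$, $R_\infty=\epsilon_1/2$, with the boundary curve (join of two copies of $y^2=x^3-x$, non-ordinary iff $p\equiv 3\bmod 4$) weighted by $1/16$.

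There is also a conceptual slip in your final step: you expect the congruence-dependent pieces to cancel ``exactly as the $\epsilon_3$-terms canceled in Proposition~\ref{PcountbyautCaseB}.'' That is not what happens here. Only the $\epsilon_3$ dependence drops out, and it does so for the specific reason that the $D_6$ point carries the same weight $1/4$ as a generic point, so $R_4/4+R_{12}/4=k/4$. The $\epsilon_1$ and $\epsilon_2$ terms do \emph{not} cancel: writing $p=8k+\epsilon$, one gets
\begin{equation*}
\frac{R_4}{4}+\frac{R_{12}}{4}+\frac{R_{24}}{8}+\frac{R_\infty}{16}
=\frac{k}{4}+\frac{\epsilon_2}{16}+\frac{\epsilon_1}{32},
\end{equation*}
and the $\epsilon_1,\epsilon_2$ contributions are exactly what supplies the correction $(\epsilon-1)/32$ needed so that the total equals $(p-1)/2^5$ uniformly across the four congruence classes modulo $8$. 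Without executing the enumeration and this verification, the proposal remains a correct plan rather than a proof; with them, it would reproduce the paper's argument.
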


\begin{proof}
We work with the family \eqref{Eusethis}.
Each curve $C_\beta$ is hyperelliptic.
If $C_\beta$ is non-ordinary, then its Cartier--Manin matrix is the zero matrix by \cite[Proposition~1.10]{IKO};
From this and \cite[Proposition~1.14]{IKO}, $\alpha_{C_\beta}=2$.
Thus 
$\mu(\cF, p) = \sum_{[C]} \frac{1}{\#\mathrm{redAut}(C, \tau)}$, where the sum is over the isomorphism classes 
of non-ordinary curves $C_\beta$ in \eqref{Eusethis}.

Let $R_\infty$ be the number of isomorphism classes of non-ordinary 
singular curves in (the closure of) the family in $\overline{\mathcal{M}}_2$. 
The boundary of the family \eqref{Eusethis} contains a singular curve $S$
composed of the join of two elliptic curves.
These each have equation $y^2=x^3-x$, which is non-ordinary if and only if $p \equiv 3 \bmod 4$.
So $R_\infty = \epsilon_1/2$.  
Also $\#\mathrm{redAut}(S) = 4^2$, because each elliptic curve has $4$ automorphisms and there is an automorphism
transposing the two elliptic curves.

Let $R_n$ be the number of isomorphism classes of non-ordinary 
smooth curves $C_\beta$ in \eqref{Eusethis} such that $\#{\rm redAut}(C_\beta) =n$. 
Suppose $p \geq 7$.
By \cite{Igusa}, the only possibilities for $\mathrm{redAut}(C_\beta)$ are
$C_2 \times C_2$, $D_{6}$, or $S_4$.
This shows that $R_n=0$ unless $n=4,12, \text{or } 24$.
In the latter two cases, there is an automorphism of order $3$ that does not normalize $\langle \tau^2 \rangle$.

When $\mathrm{redAut}(C_\beta) \simeq D_6$,  
we can identify $\tau$ with a reflection in $D_6$; 
the normalizer of $\langle \tau \rangle$ in $D_6$ is the Klein-4 group, of order $4$.  
When $\mathrm{redAut}(C_\beta) \simeq D_6$, this shows that $\#\mathrm{redAut}(C_\beta, \tau)=4$. 
When $\mathrm{redAut}(C_\beta) \simeq S_4$, 
we can identify $\tau$ with a $2-2$ cycle in $S_4$. 
The normalizer of $\langle \tau \rangle$ in $S_4$ has order $8$.
When $\mathrm{redAut}(X) \simeq S_4$,
this shows that $\#\mathrm{redAut}(X, \tau)=8$. 

\smallskip

{\bf Claim:} $R_4= k - \epsilon_3/2$; $R_{12} = \epsilon_3/2$; and $R_{24} = \epsilon_2/2$. 

\smallskip

{\bf Proof of claim:} Following \cite{IKO}, we see that:

The curve $C_\beta$ has $\mathrm{redAut}(C_\beta)=D_6$ (called Case (4)) if and only if $\beta=9$ (i.e., $\lambda = 2$);
By \cite[Proposition~1.11]{IKO}, $C_9$ is supersingular if and only if $p \equiv 5 \bmod 6$.  
So $R_{12} = \epsilon_3/2$.

The curve $C_\beta$ has $\mathrm{redAut}(C_\beta)=S_4$ (called Case (5)) 
if and only if $\beta=-1$ (i.e., $\lambda = i$);
By \cite[Proposition~1.12]{IKO},
$C_{-1}$ is supersingular if and only if $p \equiv 5, 7 \bmod 8$.  So $R_{24} = \epsilon_2/2$.

By Proposition~\ref{a1322total}, the total number of supersingular curves in the family \eqref{Eusethis} is 
$k + \epsilon_2/2$.
Thus the number such that $\mathrm{redAut}(C_\beta) \simeq C_2 \times C_2$ is $k - \epsilon_3/2$.
This equals the formula found in \cite[Theorem~3.3]{IKO}, 
$R_4=\frac{p-1}{8} - \frac{\epsilon_1}{8} - \frac{\epsilon_2}{4} - \frac{\epsilon_3}{2}$.
In conclusion, this yields the mass formula:
\[\frac{R_4}{4} + \frac{R_{12}}{4} + \frac{R_{24}}{8} + \frac{R_{\infty}}{16} 
= \frac{R_4}{4} + \frac{\epsilon_3/2}{4} + \frac{\epsilon_2/2}{8} + \frac{\epsilon_1/2}{16} = \frac{p-1}{2^5}.\]
\end{proof}

\section{Other work} \label{Srelated}

In \cite{LMS1}, the authors study the $\mu$-ordinary
Newton polygon and Ekedahl--Oort type for a family of cyclic covers of $\mathbb{P}^1$, using the Hasse--Witt triple.  
Under certain conditions, such as having at most $n=5$ branch points, this generalizes the work \cite{Bouw}
about the generic $p$-rank on the family.  In \cite{LMS2}, they continue with an investigation of the 
non-$\mu$-ordinary locus.

There are some other results on mass formulas in positive characteristic, 
which are not closely connected with this paper.
The papers \cite{Yureal}, \cite{YuYu}, \cite{KaremakerYoYu} provide 
mass formula for supersingular abelian varieties of dimensions 2 and 3 and 
supersingular abelian varieties with real multiplication.
These papers take an adelic perspective, building on the work of Ekedahl \cite{ekedahl87}, 
and study the mass of arithmetic quotients of certain double coset spaces.
Our paper generalizes the Eichler--Deuring formula to a different 
class of abelian varieties using a different kind of proof.  

In \cite{Yureal},   
Yu studies the basic locus of Shimura varieties of PEL-type, as defined by Kottwitz \cite{kottwitz1}.
He proves a comparison formula between the geometric mass $\sum \frac{1}{\#\mathrm{Aut}(A)}$ of the basic locus
and the arithmetic mass of a double coset space, but states that it is difficult to compute either one.
For families of $\mu_d$-covers of ${\mathbb P}^1$, the image under the Torelli morphism 
is contained in a Shimura variety $S$ of PEL-type for $\QQ[\mu_d]$.
The results in this paper do not follow from \cite[Theorem~4.6]{Yureal} because 
the Torelli locus typically has positive codimension in $S$.

\bibliographystyle{amsalpha}
\bibliography{supersingular}

\end{document}